\newtheorem{mytheorem}{Theorem}[section]{\bf}{\it}
\numberwithin{mytheorem}{section}
\newtheorem{mycorollary}[mytheorem]{Corollary}{\bf}{\it}
\newtheorem{mydefinition}[mytheorem]{Definition}{\bf}{}
\newtheorem{myprop}[mytheorem]{Proposition}{\bf}{\it}
{\bf}{\it}
\newtheorem{myproblem}[mytheorem]{Problem}{\bf}{\it}
\newtheorem{myremark}[mytheorem]{Remark}{\bf}{}
\newtheorem{myexample}[mytheorem]{Example}{\bf}{\it}
\newcommand{\IR}{\mathbb R}
\newcommand{\IN}{\mathbb N}
\newcommand{\II}{\mathbb I}
\newcommand{\K}{\mathcal K}
\newcommand{\U}{\mathcal U}
\newcommand{\V}{\mathcal V}
\newcommand{\F}{\mathcal F}
\newcommand{\X}{\mathcal X}
\newcommand{\C}{\mathcal C}
\newcommand{\M}{\mathfrak M}
\newcommand{\Ra}{\Rightarrow}
\newcommand{\w}{\omega}
\newcommand{\cf}{\mathrm{cf}}
\newcommand{\cc}{\mathbf{C}_k}
\newcommand{\CC}{C_k}
\newcommand{\supp}{\mathrm{supp}}
\newcommand{\cl}{\mathrm{cl}}
\newcommand{\DD}{\mathcal{D}}
\newcommand{\KK}{\mathcal{K}}
\newcommand{\NN}{\mathbb{N}}
\newcommand{\FF}{\mathcal{F}}
\newcommand{\I}{\mathcal{I}}
\newcommand{\e}{\varepsilon}
\newcommand{\Pp}{\mathfrak{P}}
\newcommand{\Nn}{\mathcal{N}}
\title[On the $\CC$-stable closure of the class of (separable) metrizable spaces]{On the $\CC$-stable closure of the class\\ of (separable) metrizable spaces}
\thanks{The second named author was partially supported by Israel Science Foundation grant 1/12. }
\author{Taras Banakh \and Saak Gabriyelyan}
\address{T.Banakh:
            Ivan Franko National University, Lviv, Ukraine, and Jan Kochanowski University, Kielce, Poland}
\email{t.o.banakh@gmail.com}
\address{S. Gabriyelyan:
              Department of Mathematics, Ben-Gurion University of the Negev, Beer Sheva, Israel}
\email{saak@math.bgu.ac.il}           %
     \subjclass{ 46E10 \and 54C35 \and 54E18}
\begin{document}

\begin{abstract}
Denote by $\cc[\M]$ the $\CC$-stable closure of the class $\M$ of all metrizable spaces, i.e., $\cc[\M]$ is  the smallest class of topological spaces that contains $\M$ and is closed under taking subspaces, homeomorphic images, countable topological sums, countable Tychonoff products, and function spaces $\CC(X,Y)$ with Lindel\"of domain
in this class. We show that the class  $\cc[\M]$ coincides with the class of all topological spaces homeomorphic to subspaces of the function spaces $\CC(X,Y)$ with a  separable metrizable space $X$ and a metrizable space $Y$.
We say that a topological space $Z$ is Ascoli if every compact subset of $\CC(Z)$ is evenly continuous; by the Ascoli Theorem, each $k$-space is Ascoli.
We prove that the class $\cc[\M]$ properly contains the class of all Ascoli $\aleph_0$-spaces and is  properly  contained in the class of $\Pp$-spaces, recently introduced by Gabriyelyan and K\c akol.
Consequently, an Ascoli space $Z$ embeds into the function space $\CC(X,Y)$ for suitable separable metrizable spaces $X$ and $Y$ if and only if $Z$ is an $\aleph_0$-space.
\keywords{Metric space \and function space \and Ascoli space \and $\aleph_0$-space \and $\Pp$-space \and $\CC$-stable closure }
\end{abstract}
\maketitle

\section{Introduction}

All topological spaces considered in this paper are regular $T_1$-spaces. For two topological spaces $X$ and $Y$, we denote by  $\CC(X,Y)$ the space $C(X,Y)$ of all continuous functions  from $X$ into $Y$ endowed with the compact-open topology. The space $\CC(X,\IR)$ of all real-valued functions on $X$ is denoted by $\CC(X)$. In \cite{Mich} Michael introduced the class of $\aleph_0$-spaces, which appeared to be essential in studying function spaces $C_k(X,Y)$, see \cite{mcoy}. Recall that a topological space $X$ is an {\em $\aleph_0$-space} if $X$ possesses a countable $k$-network. A family $\mathcal{N}$ of subsets of $X$ is a {\em $k$-network} in $X$ if for any open subset $U\subset X$ and compact subset $K\subset U$ there exists a finite subfamily $\F\subset\mathcal{N}$ such that $K\subset\bigcup\FF\subset U$. Any separable metrizable space is an $\aleph_0$-space. In \cite{Mich} Michael proved that for any $\aleph_0$-spaces $X$ and $Y$ the function space $\CC(X,Y)$ is also an $\aleph_0$-space. In particular, for any separable metrizable spaces $X$ and $Y$ the function space $\CC(X,Y)$ is not necessarily metrizable, but it is always an $\aleph_0$-space.

Having in mind the Nagata-Smirnov metrization theorem,  O'Meara \cite{OMe2} introduced the class of $\aleph$-spaces,  which contains all metrizable spaces and all $\aleph_0$-spaces. A topological space $X$ is called an {\em $\aleph$-space} if $X$ is regular and  possesses a $\sigma$-locally finite $k$-network.  It follows that a topological space is an $\aleph_0$-space if and only if it is a Lindel\"of $\aleph$-space. Foged \cite{foged} (and O'Meara \cite{OMe2})  generalized the result of Michael proving that for any Lindel\"of $\aleph$-space $X$ and any (paracompact) $\aleph$-space $Y$ the function space $C_k(X,Y)$ is a (paracompact) $\aleph$-space.
It is well-known (see \cite{Mich,gruenhage}) that the classes of  $\aleph$-spaces and \mbox{$\aleph_0$-spaces} are closed under taking subspaces, homeomorphic images, countable topological sums and countable Tychonoff products. These stability properties motivate the following definition.
\begin{mydefinition}
A class $\X$ of topological spaces is said to be
\begin{itemize}
\item[$\bullet$] {\em stable} if $\X$ is closed under the operations of taking subspaces, homeomorphic images, countable topological sums, and  countable Tychonoff products;
\item[$\bullet$] {\em $\CC$-stable} if $\X$ is stable and for any Lindel\"of space $X\in\X$ and any space $Y\in\X$ the function space $\CC(X,Y)$ belongs to the class $\X$.
\end{itemize}
\end{mydefinition}

Let us observe that each stable class $\X$ containing a non-empty topological space contains all zero-dimensional separable metrizable spaces, and if the closed interval $\II =[0,1]$ belongs to $\X$, then $\X$ contains all separable metrizable spaces (see Proposition \ref{p:Ascoli} below).

For two classes $\X$ and $\tilde\X$ of topological spaces, we shall say that $\tilde\X$ is an {\em extension} of $\X$ if $\X\subset\tilde\X$. Among all ($\CC$-)stable extensions of $\X$ there is the smallest one.
\begin{mydefinition}
For a class $\X$ of topological spaces, denote  by $[\X]$ and $\cc[\X]$ the smallest stable extension and  the smallest $\CC$-stable  extension of $\X$, respectively. The classes $[\X]$ and $\cc[\X]$ will be called the {\em stable closure} and the {\em $\CC$-stable closure} of the class $\X$, respectively.
\end{mydefinition}

In the paper we study the $\CC$-stable closures $\cc[\M]$ and $\cc[\M_0]$ of the classes of all metrizable spaces $\M$ and all separable metrizable spaces $\M_0$, respectively.

For two classes $\X$ and $\mathcal{Y}$ of topological spaces we denote by $\cc(\X,\mathcal{Y})$ the class of all topological spaces  which embed into function spaces $\CC(X,Y)$ with $X\in\X$ and $Y\in\mathcal{Y}$. In Section \ref{sec-Charac-C(M)} we obtain the following characterization of the classes $\cc[\M]$ and $\cc[\M_0]$.

\begin{mytheorem} \label{t-Main-C(M)}
$\cc[\M_0]=\cc(\M_0,\M_0)$ and $\cc[\M]=\cc(\M_0,\M)$.
\end{mytheorem}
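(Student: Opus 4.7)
The inclusion $\cc(\M_0, \M_\star) \subseteq \cc[\M_\star]$ (writing $\M_\star$ for either $\M_0$ or $\M$) is immediate: every $X \in \M_0$ is Lindel\"of, and since $\M_0 \subseteq \M_\star \subseteq \cc[\M_\star]$, the $\CC$-stability of $\cc[\M_\star]$ gives $\CC(X, Y) \in \cc[\M_\star]$ for every $Y \in \M_\star$, and hence all of its subspaces. For the reverse inclusion, by the minimality of $\cc[\M_\star]$ it suffices to prove that the class $\cc(\M_0, \M_\star)$ is itself $\CC$-stable and contains $\M_\star$. The containment $\M_\star \subseteq \cc(\M_0, \M_\star)$ follows from the identification $Y \cong \CC(\{*\}, Y)$, and closure under taking subspaces and homeomorphic images is trivial.

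For closure under countable Tychonoff products, given embeddings $Z_n \hookrightarrow \CC(X_n, Y_n)$ with $X_n \in \M_0$ and $Y_n \in \M_\star$, set $X := \bigsqcup_n X_n \in \M_0$ and $Y := \prod_n Y_n \in \M_\star$. Fixing basepoints $y_m^0 \in Y_m$ embeds each $Y_n$ as a closed subspace of $Y$ and induces $\CC(X_n, Y_n) \hookrightarrow \CC(X_n, Y)$; hence
$$
\prod_n Z_n \;\hookrightarrow\; \prod_n \CC(X_n, Y) \;=\; \CC(X, Y).
$$
For closure under countable topological sums, fix basepoints $z_n^0 \in Z_n$ and consider the map $\iota\colon \bigsqcup_n Z_n \to \IN \times \prod_m Z_m$ sending $z \in Z_n$ to $(n, (\delta_{nm}(z))_m)$, where $\delta_{nn}(z) = z$ and $\delta_{nm}(z) = z_m^0$ for $m \neq n$. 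Each $Z_n$ is sent homeomorphically onto the clopen slice over $\{n\}$, so $\iota$ is a topological embedding; since $\IN \in \M_0 \subseteq \cc(\M_0, \M_\star)$ and $\prod_m Z_m \in \cc(\M_0, \M_\star)$ by the product case, closure under finite products and subspaces yields $\bigsqcup_n Z_n \in \cc(\M_0, \M_\star)$.

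The technical heart of the proof is closure under $\CC(Z, W)$ when $Z$ is Lindel\"of and $Z, W \in \cc(\M_0, \M_\star)$. Starting from embeddings $Z \hookrightarrow \CC(X_1, Y_1)$ and $W \hookrightarrow \CC(X_2, Y_2)$, post-composition yields $\CC(Z, W) \hookrightarrow \CC(Z, \CC(X_2, Y_2))$. The natural next move, the adjoint map $\CC(Z, \CC(X_2, Y_2)) \to \CC(Z \times X_2, Y_2)$, is always a continuous injection but need not be a topological embedding when $X_2$ fails to be locally compact and $Z$ fails to be a $k$-space; this is the principal obstacle. My strategy for it is to first reduce the inner target by a universal embedding $Y_2 \hookrightarrow \IR^\omega$ in the separable case (respectively, a Kuratowski-type embedding of $Y_2$ into a Banach space in the general metrizable case), so that $\CC(Z, \CC(X_2, Y_2))$ becomes a subspace of a countable product of the simpler spaces $\CC(Z, \CC(X_2, \IR))$, and then to exploit the Lindel\"ofness of $Z$ together with a countable $k$-network carried by the ambient $\CC(X_1, Y_1)$ to upgrade the adjoint to a genuine topological embedding on the subspace cut out by the image of $\CC(Z, W)$. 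Together with the closure properties already established, this produces an embedding of $\CC(Z, W)$ into some $\CC(X, Y)$ with $X \in \M_0$ and $Y \in \M_\star$, completing the verification of $\CC$-stability of $\cc(\M_0, \M_\star)$ and thus proving both equalities of the theorem.
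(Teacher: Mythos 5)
Your handling of the routine closure properties is essentially correct, and in fact takes a slightly different (equally valid) route from the paper: the paper proves closure under countable sums directly, by extending each $f\in\CC(X_n,Y_n)$ by a added point $\ast$ over the other summands of $X=\bigoplus_n X_n$, and then obtains countable products only afterwards via $\prod_n X_n\subset X^\w=\CC(\w,X)$; you instead prove products directly through $\CC(\bigoplus_n X_n,\prod_n Y_n)=\prod_n\CC(X_n,\prod_m Y_m)$ and derive sums from products by the slice embedding into $\IN\times\prod_m Z_m$. That part is fine. The problem is the step you yourself call the technical heart, closure under $\CC(Z,W)$ for Lindel\"of $Z$: there you only describe a ``strategy'' and never carry it out, and the strategy as stated does not contain the idea that actually makes the step work.

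Concretely, the map $\CC(Z,\CC(X_2,Y_2))\to\CC(Z\times X_2,Y_2)$ is not merely ``a continuous injection that may fail to be an embedding'': it is not even well defined, since a continuous $f:Z\to\CC(X_2,Y_2)$ only yields a separately continuous $\tilde f$ on $Z\times X_2$. Replacing $Y_2$ by $\IR^\w$ or by a Banach space, or restricting attention to the image of $\CC(Z,W)$, does not help, because the obstruction sits in the domain $Z\times X_2$, not in the target. The paper's fix is to change the domain rather than the codomain: since $Z$ embeds in some $\CC(X_1,Y_1)$ with $X_1\in\M_0$ and $Y_1\in\M$, Foged's theorem makes this ambient space an $\aleph$-space, so the Lindel\"of subspace $Z$ is an $\aleph_0$-space (note that in the nonseparable case the ambient space carries only a $\sigma$-locally finite $k$-network, not a countable one --- countability is exactly what Lindel\"ofness buys here, so your appeal to ``a countable $k$-network carried by $\CC(X_1,Y_1)$'' is already off for $\M_\star=\M$). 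By Michael's theorem $Z$ is then the image of a separable metrizable $M$ under a compact-covering map, and by O'Meara's Lemma~1 the dual map embeds $\CC(Z,W)$ into $\CC(M,W)\hookrightarrow\CC(M,\CC(A,B))$, where $W\subset\CC(A,B)$ with $A\in\M_0$, $B\in\M_\star$. Now $M\times A$ is metrizable, hence a $k$-space, so the exponential map $\CC(M,\CC(A,B))\to\CC(M\times A,B)$ is a genuine homeomorphism (Engelking 3.4.9), and $M\times A\in\M_0$, $B\in\M_\star$, which finishes the step. Without this reduction of the domain (or some substitute for it), your argument for the central closure property --- and hence for both equalities of the theorem --- is incomplete.
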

In other words, Theorem \ref{t-Main-C(M)} means that the problem of whether a topological space $Z$ belongs to the class $\cc[\M]$ (respectively, $\cc[\M_0]$) is equivalent to the embedding problem of the space $Z$ into the function space $\CC(X,Y)$ with a separable metrizable space $X$ and a (respectively, separable) metrizable space $Y$.

The aforementioned results of Michael imply that the class of $\aleph_0$-spaces is a $\CC$-stable extension of the class $\M_0$. In \cite{Banakh} the first named author introduced another $\CC$-stable extension of the class $\M_0$, which consists of $\Pp_0$-spaces and is  properly contained in the class of $\aleph_0$-spaces. A regular topological space $X$ is a {\em $\Pp_0$-space} if and only if $X$ has a countable $cp$-network. A family $\mathcal N$ of subsets of a topological space $X$ is called a {\em $cp$-network} if for any point $x\in X$, each neighborhood $O_x\subset X$ of $x$ and every subset $A\subset X$ containing $x$ in its closure $\bar A$ there exists a set $N\in\Nn$ such that $x\in N\subset O_x$ and moreover $N\cap A$ is infinite if $A$ accumulates at $x$ (see \cite{Banakh,GK-GMS1}). So each space $X\in \cc[\M_0]$ is even a $\Pp_0$-space. However, there exists a  $\Pp_0$-space $X$ which does not belong to the class $\cc[\M]$ (see Example \ref{exa-Pyt0-C[M]}).

Following \cite{GK-GMS1}, we define a topological space $Y$ to be a {\em $\Pp$-space} if $Y$ is regular and  possesses a $\sigma$-locally finite $cp$-network. According to \cite{GK-GMS1}, this class is closed under taking subspaces, topological sums and countable Tychonoff products. Each  $\Pp$-space is an  $\aleph$-space. A topological space $X$ is a $\Pp_0$-space if and only if $X$ is a Lindel\"{o}f $\Pp$-space \cite{GK-GMS1}.  In Section \ref{s:saak} we prove the following theorem which gives a partial answer to Question 6.2 in \cite{GK-GMS1}.

\begin{mytheorem} \label{c-Pyt-A-Met}
For any $\aleph_0$-space $X$ and each metrizable space $Y$ the function space $\CC(X,Y)$ is a paracompact $\Pp$-space.
\end{mytheorem}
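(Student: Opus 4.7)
The plan is to exhibit an explicit $\sigma$-locally finite $cp$-network for $\CC(X,Y)$, while extracting paracompactness for free from the Foged--O'Meara theorem recalled in the introduction. Since $X$ is an $\aleph_0$-space (hence a Lindel\"of $\aleph$-space) and the metrizable space $Y$ is a paracompact $\aleph$-space, that theorem yields $\CC(X,Y)$ as a paracompact $\aleph$-space; regularity and paracompactness are thus in hand, and only the $cp$-network remains to be built.

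Fix a countable $k$-network $\K = \{K_n : n \in \omega\}$ of nonempty compact subsets of $X$ (available in any $\aleph_0$-space), and a $\sigma$-locally finite base $\mathcal{B} = \bigcup_{m \in \omega} \mathcal{B}_m$ of $Y$ (Nagata--Smirnov). With the usual shorthand $[K, V] = \{f \in C(X,Y) : f(K) \subset V\}$, I propose the family
$$\Nn := \{[K_{i_1}, V_1] \cap \cdots \cap [K_{i_n}, V_n] : n \in \omega,\ K_{i_j} \in \K,\ V_j \in \mathcal{B}\}.$$
Because each member of $\Nn$ is open in $\CC(X,Y)$, once $\Nn$ is shown to refine the canonical subbase at every point, the $cp$-condition comes for free: if $f \in N \in \Nn$ then $N$ is a neighborhood of $f$, so every $A$ accumulating at $f$ meets $N$ in an infinite set.

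For this refinement, fix $f \in \CC(X,Y)$ and a basic neighborhood $W = \bigcap_{j=1}^{r} [C_j, O_j]$ of $f$. For each $j$, cover the compact image $f(C_j) \subset O_j$ by finitely many base members $V_1^{(j)}, \ldots, V_{m_j}^{(j)} \in \mathcal{B}$ with $V_l^{(j)} \subset O_j$; apply the shrinking lemma on the normal compact space $f(C_j)$ to obtain compact $E_l^{(j)} \subset V_l^{(j)}$ with $\bigcup_l E_l^{(j)} = f(C_j)$, and set $D_l^{(j)} := C_j \cap f^{-1}(E_l^{(j)})$. Each $D_l^{(j)}$ is a compact subset of the open set $f^{-1}(V_l^{(j)})$, so the $k$-network property supplies a finite $\FF_l^{(j)} \subset \K$ with $D_l^{(j)} \subset \bigcup \FF_l^{(j)} \subset f^{-1}(V_l^{(j)})$. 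Intersecting the sets $[K, V_l^{(j)}]$ over $K \in \FF_l^{(j)}$, $l$, and $j$ yields the required $N \in \Nn$ with $f \in N \subset W$.

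For the $\sigma$-local finiteness, partition $\Nn$ over the countable index set of tuples $(n, K_{i_1}, \ldots, K_{i_n}, m_1, \ldots, m_n) \in \omega \times \K^n \times \omega^n$ into pieces $\Nn_{n, \vec{K}, \vec{m}} := \{\bigcap_j [K_{i_j}, V_j] : V_j \in \mathcal{B}_{m_j}\}$. To prove local finiteness of one piece at $g \in \CC(X,Y)$, set $K := K_{i_1} \cup \cdots \cup K_{i_n}$; using compactness of $g(K)$ together with local finiteness of each $\mathcal{B}_{m_j}$, I shall choose an open $U \supset g(K)$ meeting only finitely many members of each $\mathcal{B}_{m_j}$. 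The neighborhood $[K, U]$ of $g$ then meets an element of $\Nn_{n, \vec{K}, \vec{m}}$ only through tuples $(V_1, \ldots, V_n)$ with $V_j \cap U \neq \emptyset$, of which there are only finitely many. The chief difficulty is the bookkeeping in the refinement step, where a subbasic set $[C, O]$ with $C \notin \K$ and $O \notin \mathcal{B}$ must be witnessed from below using only $\K$ and $\mathcal{B}$; the shrinking trick on the compact image $f(C)$ combined with the finite-union clause of the $k$-network property is the key mechanism that makes this go through.
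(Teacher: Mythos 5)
There is a genuine gap, and it is located exactly at the two steps you flag as routine. First, your starting point --- ``fix a countable $k$-network $\K$ of nonempty \emph{compact} subsets of $X$, available in any $\aleph_0$-space'' --- is false. If an $\aleph_0$-space had a countable $k$-network consisting of compact sets, then the finite unions of its members would form a countable cofinal family of compact sets, i.e.\ the space would be hemicompact; the rationals $\mathbb{Q}$ are an $\aleph_0$-space that is not hemicompact, so no such $\K$ exists in general. A countable $k$-network of an $\aleph_0$-space consists of sets that are typically neither compact nor open, and this is precisely what makes the theorem nontrivial.

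Second, the shortcut ``the $cp$-condition comes for free because every member of $\Nn$ is open'' proves too much and therefore cannot be repaired by a cleverer choice of compact sets. A $\sigma$-locally finite family of \emph{open} sets which refines the neighborhoods at every point is a $\sigma$-locally finite base, and by the Nagata--Smirnov theorem the (regular) space $\CC(X,Y)$ would then be metrizable; but $\CC(\mathbb{Q})$ is not metrizable (metrizability of $\CC(X)$ forces hemicompactness of $X$), while the theorem asserts it is a paracompact $\Pp$-space. So any correct construction must use non-open members $[K;D]$ with $K$ drawn from the genuine (non-compact) $k$-network, and then the accumulation clause of the $cp$-network definition is where the real work lies: in the paper this is done by intersecting the members of $\K$ squeezed between $C_i$ and $f^{-1}(U_i)$ to get decreasing sequences $K_{i,j}$ converging to $C_i$, and by exploiting metrizability of compact subsets of $\aleph_0$-spaces (plus, in the general ideal version, discrete-completeness) to show that some $\F_j=\bigcap_i[K_{i,j};U_i]$ meets an accumulating set $A$ in an infinite set. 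Your paracompactness reduction to the Foged--O'Meara theorem is fine and matches the paper, and your $\sigma$-local finiteness argument is essentially the paper's; but the $cp$-network part, which is the heart of the statement, is missing.
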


The above discussion and Theorems \ref{t-Main-C(M)} and \ref{c-Pyt-A-Met} immediately imply

\begin{mycorollary} \label{c-Pyt-C[M]}
The class $\cc[\M_0]$ is contained in the class of $\Pp_0$-spaces, and the class $\cc[\M]$ is contained in the class of paracompact $\Pp$-spaces.
\end{mycorollary}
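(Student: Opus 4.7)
The plan is to split the statement into its two inclusions and to exhibit each one as essentially a bookkeeping consequence of what has already been established, without introducing new machinery.

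For the inclusion of $\cc[\M]$'s separable analogue, namely $\cc[\M_0]$ into the class of $\Pp_0$-spaces, I would not actually need either of Theorems \ref{t-Main-C(M)} or \ref{c-Pyt-A-Met}. The paragraph preceding the corollary already records, citing \cite{Banakh}, that the class of $\Pp_0$-spaces is a $\CC$-stable class containing $\M_0$. Since by definition $\cc[\M_0]$ is the \emph{smallest} $\CC$-stable class containing $\M_0$, minimality alone delivers the inclusion.

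For the inclusion of $\cc[\M]$ into the class of paracompact $\Pp$-spaces, I would chain the two theorems. Given $Z\in\cc[\M]$, Theorem \ref{t-Main-C(M)} rewrites this as $Z\in\cc(\M_0,\M)$, so there exist a separable metrizable $X$, a metrizable $Y$, and an embedding $Z\hookrightarrow\CC(X,Y)$. Since every separable metrizable space is an $\aleph_0$-space, Theorem \ref{c-Pyt-A-Met} asserts that $\CC(X,Y)$ is a paracompact $\Pp$-space. The $\Pp$-property transfers to $Z$ by the closure of $\Pp$-spaces under subspaces recalled from \cite{GK-GMS1}: concretely, the trace $\{N\cap Z:N\in\Nn\}$ of a $\sigma$-locally finite $cp$-network $\Nn$ of $\CC(X,Y)$ is a $\sigma$-locally finite $cp$-network of $Z$.

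The one point calling for care is paracompactness of $Z$, since paracompactness is not hereditary in general. I expect to dispatch it by appealing to the structural fact from \cite{GK-GMS1} that a regular space carrying a $\sigma$-locally finite $cp$-network is automatically paracompact, so that the trace network produced above already forces $Z$ to be paracompact. With that single subsidiary fact in hand, the corollary is indeed a direct consequence of Theorems \ref{t-Main-C(M)} and \ref{c-Pyt-A-Met}, in line with the word ``immediately'' used in the statement.
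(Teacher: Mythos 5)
Your first inclusion is handled exactly as the paper intends: the introduction already records (citing \cite{Banakh}) that the $\Pp_0$-spaces form a $\CC$-stable extension of $\M_0$, so minimality of $\cc[\M_0]$ gives $\cc[\M_0]\subset\{\Pp_0\mbox{-spaces}\}$; and your second inclusion follows the paper's route as well (embed $Z\in\cc[\M]$ into $\CC(X,Y)$ with $X\in\M_0$, $Y\in\M$ via Theorem~\ref{t-Main-C(M)}, apply Theorem~\ref{c-Pyt-A-Met}, and use that the $\Pp$-property is hereditary). The genuine gap is your last step. The ``structural fact'' you invoke --- that a regular space with a $\sigma$-locally finite $cp$-network is automatically paracompact --- is not stated in this paper and is not what it attributes to \cite{GK-GMS1}; there the class of $\Pp$-spaces is only said to be closed under subspaces, sums and countable products and to consist of $\aleph$-spaces. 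If such a fact were available, the word ``paracompact'' in Theorem~\ref{c-Pyt-A-Met} (and in the corollary itself) would be redundant, and the proof of Theorem~\ref{saak} would not need its separate paracompactness argument via Lemmas 5 and 6 of \cite{OMe2} --- which it explicitly gives. So, as written, the paracompactness of $Z$ is unjustified, and this is precisely the one point where mere heredity of the network property does not suffice.

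The standard repair is the following. A $cp$-network is in particular a network, so every $\Pp$-space is a $\sigma$-space and hence perfect: if $\Nn=\bigcup_{n\in\w}\Nn_n$ is a $\sigma$-locally finite network of a regular space and $U$ is open, then $U=\bigcup_{n\in\w}\overline{W_n}$, where $W_n=\bigcup\{N\in\Nn_n:\overline{N}\subset U\}$ and $\overline{W_n}=\bigcup\{\overline N:N\in\Nn_n,\ \overline N\subset U\}\subset U$ by local finiteness; thus every open set is an $F_\sigma$. A paracompact perfectly normal space is hereditarily paracompact, since a space is hereditarily paracompact as soon as all its open subspaces are paracompact, open subspaces here are $F_\sigma$-sets, and $F_\sigma$-subsets of paracompact spaces are paracompact \cite{Eng}. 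Consequently every subspace of the paracompact $\Pp$-space $\CC(X,Y)$ is itself paracompact, and your $Z$ is a paracompact $\Pp$-space, which completes the second inclusion in the way the paper's ``immediately'' presupposes.
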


Since any $\Pp$-space has countable tightness \cite{GK-GMS1}, we obtain that also all spaces in the class $\cc[\M]$ have countable tightness.

Another topological property which holds for every space in the class $\cc[\M]$ can be obtained by modifying the $P$-space property.  Recall that a point $x$ of a topological space $X$ is called a {\em $P$-point} if for any countable family $\U$ of neighborhoods of $x$ the intersection $\bigcap\U$ is a neighborhood of $x$; $X$ is called a {\em $P$-space} if each $x\in X$ is a $P$-point. A point $x$ of a topological space $X$ is defined to be a {\em $P^{\w_1}_\w$-point} if for any uncountable family $\mathcal U$ of neighborhoods of $x$ there is a countably infinite subfamily $\mathcal V\subset\U$ whose intersection $\bigcap\V$ is a neighborhood of $x$. A topological space $X$ is called a {\em $P^{\w_1}_\w$-space} if each its point is a $P^{\w_1}_\w$-point. In Section \ref{s:p} we prove the following theorem.
\begin{mytheorem}\label{c:smP}
For any separable metrizable space $X$ and any metrizable space $Y$,  the function space $\CC(X,Y)$ is a $P^{\w_1}_\w$-space.
\end{mytheorem}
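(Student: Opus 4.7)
The plan is to exploit the standard base of neighborhoods at $f\in\CC(X,Y)$ coming from the compact-open topology. Fixing a compatible metric $d_Y$ on $Y$, a basic neighborhood of $f$ has the form $W(f;K,\varepsilon) = \{g\in C(X,Y) : \sup_{x\in K} d_Y(f(x),g(x))<\varepsilon\}$ for some compact $K\subset X$ and some $\varepsilon>0$. Given an uncountable family $\U$ of neighborhoods of $f$, I choose for each $U\in\U$ such a basic sub-neighborhood $W(f;K_U,\varepsilon_U)\subset U$. A first pigeonhole on $\varepsilon_U$ yields an integer $k$ and an uncountable subfamily $\U_k\subset\U$ with $\varepsilon_U\geq 1/k$ for every $U\in\U_k$. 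The task then reduces to finding a countably infinite $\{U_n:n\in\w\}\subset\U_k$ together with a single compact set $K\subset X$ containing every $K_{U_n}$, for then $W(f;K,1/k)\subset\bigcap_n W(f;K_{U_n},\varepsilon_{U_n})\subset\bigcap_n U_n$ will be a neighborhood of $f$, witnessing that $f$ is a $P^{\w_1}_\w$-point.

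To produce such a subfamily, I work inside the hyperspace $\K(X)$ of nonempty compact subsets of $X$ with the Hausdorff metric induced by a bounded compatible metric on $X$. Since $X$ is separable metrizable, so is $\K(X)$: the finite subsets of any countable dense subset of $X$ form a countable dense subset of $\K(X)$. I then consider the assignment $U\mapsto K_U$ from $\U_k$ to $\K(X)$. If its image is countable, a further pigeonhole yields one $K^*\in\K(X)$ with uncountable preimage, and I pick any countably infinite set of distinct $U_n$ from that preimage and take $K:=K^*$. Otherwise the image is an uncountable subset of the second countable space $\K(X)$ and hence possesses a condensation point $K_\infty\in\K(X)$; using the first countability of the metrizable space $\K(X)$, I extract a sequence of distinct $U_n\in\U_k$ with $K_{U_n}\to K_\infty$ in the Hausdorff metric and put $K:=K_\infty\cup\bigcup_{n\in\w}K_{U_n}$.

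The step I expect to be most delicate is verifying that the set $K$ in this second case is actually compact in $X$. This is a folklore fact about Hausdorff-convergent sequences of compacta in a metric space and follows by a direct sequential compactness argument: for any sequence $(x_j)$ in $K$, either infinitely many $x_j$ lie in $K_\infty$ or in some single $K_{U_n}$, giving a convergent subsequence by compactness of those sets, or else $x_j\in K_{U_{n_j}}$ with $n_j\to\infty$, in which case the Hausdorff convergence $K_{U_n}\to K_\infty$ supplies approximants $y_j\in K_\infty$ with $d_X(x_j,y_j)\to 0$, so that any subsequence of $(y_j)$ convergent in $K_\infty$ yields a convergent subsequence of $(x_j)$. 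Once the compactness of $K$ is secured, both cases produce the required countably infinite subfamily $\{U_n\}\subset\U$ whose intersection contains the neighborhood $W(f;K,1/k)$ of $f$, completing the proof.
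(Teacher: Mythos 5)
Your proof is correct, and its combinatorial core is the same as the paper's: reduce each neighborhood of $f$ to one of the form $[K;\e]$, pigeonhole on $\e$, regard the compact sets as points of the hyperspace of compacta of the separable metrizable space $X$, extract a Hausdorff-convergent sequence from an uncountable family, and use that the union of the limit with the members of the sequence is compact. The difference is in the packaging. The paper factors the statement through ideals of compact sets: Theorem~\ref{t:P} is a two-way transfer theorem ($C_\I(X,Y)$ is a $P^\kappa_\lambda$-space for every metrizable $Y$ if and only if $\I$ is a $P^\kappa_\lambda$-ideal), proved by embedding $Y$ into a Banach space and using homogeneity to test only the zero function, and Theorem~\ref{t:sigmaP} shows that every $\sigma$-ideal of compacta on a separable metrizable space is a $P^{\w_1}_\w$-ideal, by exactly your hyperspace argument; Theorem~\ref{c:smP} is then the special case of the ideal of all compact sets. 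You instead argue directly at an arbitrary $f$, using that for metrizable $Y$ the compact-open topology coincides with the topology of uniform convergence on compacta for a compatible metric, which avoids the Banach-space embedding and the homogeneity step, and you spell out the compactness of $K_\infty\cup\bigcup_{n}K_{U_n}$, which the paper only asserts. What the paper's more general route buys is material needed elsewhere (the $P^\kappa_\lambda$ formalism for arbitrary ideals and the converse implication of Theorem~\ref{t:P}, used in Example~\ref{exa-Pyt0-C[M]} to produce a $\Pp_0$-space outside $\cc[\M]$); what your route buys is a shorter, self-contained proof of the stated theorem. Two cosmetic points: dispose of the degenerate case $K_U=\emptyset$ (then $U=C(X,Y)$ and such neighborhoods are harmless), and in the case of an uncountable image of the map $U\mapsto K_U$ first pass to a subfamily on which this map is injective, so that the extracted $U_n$ are automatically pairwise distinct.
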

Now Theorems \ref{t-Main-C(M)} and \ref{c:smP} imply
\begin{mycorollary} \label{c-Pww-C[M]}
Any space $X$ in the class $\cc[\M]$ is a $P^{\w_1}_\w$-space.
\end{mycorollary}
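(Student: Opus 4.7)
The approach is to show that every $f \in \CC(X,Y)$ is a $P^{\w_1}_\w$-point. Fix a metric $d$ on $Y$, and for each nonempty compact $K \subset X$ and $\e > 0$ set $W(K,\e) := \{g \in \CC(X,Y) : \sup_{x\in K} d(g(x),f(x)) < \e\}$; these form a base of open neighborhoods of $f$. Given an uncountable family $\U = \{U_\alpha : \alpha < \w_1\}$ of neighborhoods of $f$, I would first shrink each $U_\alpha$ to a basic neighborhood $W(K_\alpha,\e_\alpha) \subset U_\alpha$, and then pigeonhole on the countable cover $(0,\infty) = \bigcup_k (1/k,\infty)$ to pass to an uncountable subfamily on which $\e_\alpha > \delta$ for some fixed $\delta > 0$. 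It then suffices to exhibit distinct indices $\alpha_0, \alpha_1, \ldots$ with $\bigcap_n W(K_{\alpha_n},\e_{\alpha_n})$ a neighborhood of $f$, since the corresponding intersection $\bigcap_n U_{\alpha_n}$ will then also be a neighborhood.

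The argument reduces to the following \emph{key claim}: every uncountable family $\{K_\alpha : \alpha < \w_1\}$ of compact subsets of a separable metrizable space $X$ contains a countably infinite subfamily $\{K_{\alpha_n}\}$ whose union is contained in a single compact set $K \subset X$. Granted this, $W(K,\delta/2)$ is a neighborhood of $f$ contained in each $W(K_{\alpha_n},\e_{\alpha_n})$ (because $K_{\alpha_n} \subset K$ and $\delta/2 < \delta < \e_{\alpha_n}$), completing the proof.

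To prove the key claim, I would embed $X$ into the Hilbert cube $\II^\w$ via Urysohn's theorem and let $\tilde X \subset \II^\w$ be the closure of $X$, a compact metric space containing $X$. The hyperspace $\KK(\tilde X)$ of nonempty compact subsets of $\tilde X$ under the Hausdorff metric is then compact metrizable, and $\KK(X) \subset \KK(\tilde X)$ is a separable metrizable subspace. If only countably many distinct $K_\alpha$ appear in the family then some $K_0$ is hit uncountably often and the claim is trivial with $K := K_0$. Otherwise $\{K_\alpha\}$ is an uncountable subset of $\KK(X)$, and a standard second-countability argument (the non-condensation points of $\{K_\alpha\}$ in $\KK(X)$ are covered by basic open sets each meeting $\{K_\alpha\}$ in a countable set, so their union meets $\{K_\alpha\}$ in a countable set) yields a condensation point $K^* \in \{K_\alpha\}$ of the family in $\KK(X)$. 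From this I extract distinct indices $\alpha_n$ with $K_{\alpha_n} \to K^*$ in the Hausdorff metric, and a short verification shows that $K := K^* \cup \bigcup_n K_{\alpha_n}$ is closed in $\tilde X$ (any limit of a sequence in $K$ either lies in a single $K_{\alpha_n}$ or, by Hausdorff convergence, in $K^*$), hence compact in the compact space $\tilde X$.

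The main obstacle is to secure $K \subset X$, not merely $K \subset \tilde X$. This is precisely why the condensation point must be sought inside $\KK(X)$ rather than in the larger $\KK(\tilde X)$: a generic Hausdorff limit of $K_\alpha$'s in $\KK(\tilde X)$ could acquire points of $\tilde X \setminus X$, spoiling the inclusion. Because $K^*$ lies in $\KK(X)$ by construction and each $K_{\alpha_n} \subset X$ by hypothesis, we obtain $K \subset X$, and since compactness is intrinsic, $K$ is compact in $X$, as required.
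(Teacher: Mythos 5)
Your proposal is correct and takes essentially the same route as the paper: your ``key claim'' is exactly the paper's Theorem~\ref{t:sigmaP} specialized to the ideal of all compact subsets of a separable metrizable space (and is proved by the same hyperspace argument: an uncountable family of compacta sits in the separable metrizable hyperspace, so one extracts a convergent subsequence whose union together with its limit is compact and still lies in $X$), while your reduction via the sup-metric basic neighborhoods $W(K,\e)$ and pigeonholing on the $\e_\alpha$ mirrors the implication (1)$\Rightarrow$(2) of Theorem~\ref{t:P}, merely bypassing the Banach-space/homogeneity step. As in the paper, the corollary itself then follows by combining this function-space statement with the embedding characterization $\cc[\M]=\cc(\M_0,\M)$ of Theorem~\ref{t-Main-C(M)} and the (easy, and worth stating) heredity of the $P^{\w_1}_\w$-property under taking subspaces.
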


Corollaries \ref{c-Pyt-C[M]} and \ref{c-Pww-C[M]} give us ``upper'' bounds for the class $\cc[\M]$. To obtain a  ``lower'' bound for $\cc[\M]$, in Section~\ref{s:ascoli} we introduce and study the class of Ascoli spaces. Recall that, for topological spaces $X$ and $Y$, a subset $\KK$ of $\CC(X,Y)$ is {\em evenly continuous} if the map $(f,x)\mapsto f(x)$ is continuous as a map from $\KK\times X$ to $Y$, i.e. for any $f\in\KK$, $x\in X$ and neighborhood $O_{f(x)}\subset Y$  of $f(x)$ there exist neighborhoods $U_f\subset \K$ of $f$ and $O_x\subset X$ of $x$ such that $U_f(O_x):=\{g(y):g\in U_f,\;y\in O_x\}\subset O_{f(x)}$.
A topological space $X$ is called an {\em Ascoli space} if each compact subset $\KK$ of $\CC(X)$ is evenly continuous. By Ascoli's theorem \cite[3.4.20]{Eng}, each $k$-space, and hence each sequential space, is Ascoli.
\begin{mytheorem} \label{t-Ascoli}
Any Ascoli $\aleph_0$-space belongs to  the class $\cc[\M_0]$.
\end{mytheorem}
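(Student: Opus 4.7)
The plan is to apply Theorem~\ref{t-Main-C(M)}, which reduces the claim to showing that every Ascoli $\aleph_0$-space $Z$ embeds into $\CC(X,Y)$ for some separable metrizable $X$ and $Y$. I will take $Y=\IR$ and construct $X$ as a separable metric resolution of the function space $\CC(Z)$.

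Since $Z$ is an $\aleph_0$-space, Michael's theorem (recalled in the introduction) gives that $\CC(Z)=\CC(Z,\IR)$ is also an $\aleph_0$-space, and hence cosmic. Every cosmic regular $T_1$ space is the image of a separable metric space under a continuous surjection, so I fix such a surjection $s : M \to \CC(Z)$ with $M$ separable metrizable. I then define the transpose
\[
\phi : Z \to \CC(M), \qquad \phi(z)(m) := s(m)(z),
\]
and claim that $\phi$ is a topological embedding. Each $\phi(z)$ lies in $\CC(M)$ because it equals $\mathrm{ev}_z\circ s$, where $\mathrm{ev}_z:\CC(Z)\to\IR$ is the continuous evaluation at $z$.

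Injectivity of $\phi$ is immediate: if $\phi(z)=\phi(z')$, then $f(z)=f(z')$ for every $f\in s(M)=\CC(Z)$, and $\CC(Z)$ separates the points of the regular Lindel\"of (hence Tychonoff) space $Z$. For continuity of $\phi$ into the compact-open topology, the standard exponential adjunction reduces the task to showing that the uncurried map $(m,z)\mapsto s(m)(z)$ is continuous on $L\times Z$ for every compact $L\subset M$. This map factors as $L\times Z\xrightarrow{s\times\mathrm{id}}s(L)\times Z\xrightarrow{\mathrm{ev}}\IR$; since $s(L)$ is a compact subset of $\CC(Z)$, the Ascoli hypothesis on $Z$ forces the evaluation $s(L)\times Z\to\IR$ to be (jointly) continuous, whence so is $\phi$. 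This is the one step that genuinely uses the Ascoli hypothesis and is the main, though routine, obstacle.

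Finally, I verify that $\phi$ is open onto its image. Given an open $U\subset Z$ and a point $z_0\in U$, use Tychonoff separation in $Z$ to pick $f\in\CC(Z)$ with $f(z_0)=0$ and $f\equiv 1$ on $Z\setminus U$, and use the surjectivity of $s$ to pick $m_0\in M$ with $s(m_0)=f$. The subbasic open set $W:=\{g\in\CC(M):|g(m_0)|<1/2\}$ is then a neighborhood of $\phi(z_0)$ whose preimage under $\phi$ equals $\{z\in Z:|f(z)|<1/2\}\subset U$. Hence $\phi(U)$ is relatively open in $\phi(Z)$, so $\phi$ embeds $Z$ into $\CC(M)$ with $M$ separable metrizable; by Theorem~\ref{t-Main-C(M)} this yields $Z\in\cc[\M_0]$, as claimed.
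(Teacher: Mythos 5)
Your proof is correct, and although it rests on the same underlying idea as the paper's argument --- a double dualization in which the Ascoli property is precisely what makes the evaluation-type map continuous --- the execution is genuinely different. The paper proves Theorem \ref{t-Ascoli} as part of Theorem \ref{t:Ascoli-CX}: it embeds an Ascoli $\aleph_0$-space $X$ via the canonical map $\delta\colon X\to\CC(\CC(X,\II),\II)$, using Proposition \ref{p:AscoliInj} (the Ascoli property gives continuity of $\delta$ by Proposition \ref{p:AscoliChar}, while $\II$-Tychonoffness gives injectivity and openness), and then concludes that $\CC(\CC(X,\II),\II)\in\cc[\M_0]$ by applying Proposition \ref{p:Ascoli}(3) twice; that proposition relies on Michael's compact-covering characterization of $\aleph_0$-spaces and on the fact that the dual of a compact-covering map is a topological embedding. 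You instead take a merely continuous surjection $s\colon M\to\CC(Z)$ from a separable metrizable $M$ (available because $\CC(Z)$ is cosmic) and verify by hand that the transpose $\phi$, which is nothing but $s^*\circ\delta$, is an embedding: injectivity from point separation (your remark that $Z$, being regular Lindel\"of, is Tychonoff is exactly the needed justification), openness from a Tychonoff separating function pulled back through the surjectivity of $s$, and continuity from the even continuity of the compact sets $s(L)$ --- the single place where the Ascoli hypothesis enters, exactly paralleling Proposition \ref{p:AscoliChar}. Your steps all check out (the reduction of continuity of $\phi$ to joint continuity on $L\times Z$ for compact $L\subset M$ is indeed the standard compactness argument), and your route is more self-contained: it needs neither compact-covering maps nor the general $Y$-Ascoli/$Y$-Tychonoff apparatus, and it feeds directly into the easy inclusion $\cc(\M_0,\M_0)\subset\cc[\M_0]$ of Theorem \ref{t-Main-C(M)}, so there is no circularity. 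What the paper's heavier machinery buys is the more general Theorem \ref{t:Ascoli-CX} --- the statement for the $\CC$-stable closure of an arbitrary non-empty class $\X$, together with the zero-dimensional $2$-Ascoli variant --- of which Theorem \ref{t-Ascoli} is only the special case $\X=\M_0$.
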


If an Ascoli space $Z$ embeds into some $\CC(X,Y)$ with $X,Y\in\M_0$, then $Z$ is an $\aleph_0$-space by \cite{Mich}. Conversely, if $Z$ is an $\aleph_0$-space, then $Z\in\cc[\M_0]$ by Theorem \ref{t-Ascoli}, and hence $Z$ embeds into some $\CC(X,Y)$ with $X,Y\in\M_0$ by Theorem \ref{t-Main-C(M)}. So we obtain
\begin{mycorollary} \label{c:Ascoli-embed}
An Ascoli space $Z$ embeds into a function space $\CC(X,Y)$ with  $X,Y\in\M_0$ if and only if $Z$ is an $\aleph_0$-space.
\end{mycorollary}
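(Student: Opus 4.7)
The plan is to deduce Corollary \ref{c:Ascoli-embed} as a short formal consequence of the three preceding results already available in the paper: Michael's classical theorem about $\aleph_0$-spaces (stated in the introduction), Theorem \ref{t-Ascoli}, and Theorem \ref{t-Main-C(M)}. No new substantive argument is needed at the corollary level, so the proof will be essentially a chain of citations organized in two directions.

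For the \emph{only if} implication, I would assume that $Z$ embeds as a subspace of $\CC(X,Y)$ for some $X,Y\in\M_0$. I would then invoke the fact, recalled in the introduction, that every separable metrizable space is an $\aleph_0$-space, together with Michael's theorem that $\CC(X,Y)$ is an $\aleph_0$-space whenever both $X$ and $Y$ are. Since the class of $\aleph_0$-spaces is closed under taking subspaces, I would conclude that $Z$ is an $\aleph_0$-space. Note that the Ascoli hypothesis is not needed in this direction; it is automatic.

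For the converse, I would assume that $Z$ is an Ascoli $\aleph_0$-space. Theorem \ref{t-Ascoli} then places $Z$ in the $\CC$-stable closure $\cc[\M_0]$. Applying Theorem \ref{t-Main-C(M)}, which identifies $\cc[\M_0]$ with the class $\cc(\M_0,\M_0)$ of subspaces of function spaces $\CC(X,Y)$ with $X,Y\in\M_0$, I would obtain separable metrizable spaces $X$ and $Y$ together with an embedding $Z\hookrightarrow\CC(X,Y)$, which is what the corollary asserts.

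The main obstacle is not in assembling these two implications, but lies upstream: all the genuine content is concentrated in Theorem \ref{t-Ascoli} (which realizes every Ascoli $\aleph_0$-space inside $\cc[\M_0]$) and in Theorem \ref{t-Main-C(M)} (the characterization of the $\CC$-stable closures of $\M_0$ and $\M$ as classes of subspaces of function spaces). Once those are granted, the argument for Corollary \ref{c:Ascoli-embed} is purely bookkeeping.
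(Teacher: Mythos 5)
Your proposal is correct and follows exactly the route the paper takes: Michael's theorem (plus heredity of $\aleph_0$-spaces under subspaces) for the ``only if'' direction, and Theorem \ref{t-Ascoli} combined with Theorem \ref{t-Main-C(M)} for the converse. Nothing further is needed.
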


Spaces that belong to the classes $\cc[\M_0]$ and $\cc[\M]$ will be called {\em \mbox{$\CC[\M_0]$-spaces}} and {\em $\CC[\M]$-spaces}, respectively. We summarize the obtained and known results in the following diagram.

\[
\xymatrix{
{\mbox{sequential}\atop\mbox{$\aleph_0$-space}}  \Rightarrow {\mbox{Ascoli}\atop\mbox{$\aleph_0$-space}}\ar@{=>}[r] &\mbox{$\CC[\M_0]$-space}\ar@{=>}[d]\ar@{=>}[r]
&\mbox{$\Pp_0$-space}\ar@{=>}[d]\ar@{=>}[r]
&\mbox{$\aleph_0$-space}\ar@{=>}[d]\\
 &\mbox{$\CC[\M]$-space}\ar@{=>}[d]\ar@{=>}[r]&\mbox{$\Pp$-space}\ar@{=>}[r]\ar@{=>}[d]
&\mbox{$\aleph$-space}\\
 &\mbox{paracompact}\atop\mbox{$P^{\w_1}_\w$-space}&\mbox{countably} \atop \mbox{tight}
}
\]
\smallskip

Counterexamples constructed in the last section show that none of these implications can be reversed.

By analogy with the $\CC$-stable closure $\cc[\X]$  of a class $\X$ of topological spaces and being motivated also by the theory of Generalized Metric Spaces, in the next our paper \cite{BG-2} we introduce and characterize some natural types of the $C_p$-stable closures  of the class $\M_0$.




\section{Characterizations of the classes $\cc[\M_0]$ and $\cc[\M]$}\label{sec-Charac-C(M)}



A topological space $X$ {\em embeds} into a topological space $Y$ if there exists a topological embedding $e:X\hookrightarrow Y$. Recall that a map \mbox{$f:X\to Y$} between topological spaces is called {\em compact-covering} if for each compact subset $K\subset Y$ there is a compact subset $C\subset X$ such that $K= f(C)$.

\begin{myprop} \label{p:Ascoli}
If $\X$ is a non-empty class of topological spaces, then
\begin{enumerate}
\item[{\rm (1)}] $[\X]$ contains all zero-dimensional separable metrizable spaces;
\item[{\rm (2)}] $[\X]$ contains all separable metrizable spaces, provided that $\II\in \X$;
\item[{\rm (3)}] $\CC(Z,X) \in \cc[\X]$ for every $\aleph_0$-space $Z$ and any $X\in\X$.
\end{enumerate}
\end{myprop}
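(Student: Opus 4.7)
The plan is to handle the three parts separately, with part (3) reducing to the $\CC$-stability of $\cc[\X]$ via Michael's compact-covering preimage theorem for $\aleph_0$-spaces.

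\textbf{Parts (1) and (2).} Since $\X$ is non-empty, I pick $X_0\in\X$ and pass to a one-point subspace to place the singleton in $[\X]$. Countable topological sums then produce the countable discrete space $\NN\in[\X]$, countable Tychonoff products give $\NN^\w\in[\X]$, and thus $2^\w\in[\X]$ as a closed subspace. Every zero-dimensional separable metrizable space $X$ embeds in $2^\w$ via the characteristic map of a countable base of clopen sets, so subspace closure gives $X\in[\X]$, proving (1). If additionally $\II\in\X$, then $\II^\w\in[\X]$ by countable products, and since every separable metrizable space embeds in the Hilbert cube $\II^\w$, subspace closure yields (2).

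\textbf{Part (3).} By Michael's theorem, every $\aleph_0$-space $Z$ admits a compact-covering continuous surjection $f\colon M\to Z$ from a zero-dimensional separable metrizable space $M$ (indeed, from a subspace of $\NN^\w$). By part (1), $M\in[\X]\subseteq\cc[\X]$, and $M$ is Lindel\"of. Since $X\in\X\subseteq\cc[\X]$, the $\CC$-stability of $\cc[\X]$ yields $\CC(M,X)\in\cc[\X]$. It then suffices to show that the dual map $f^*\colon \CC(Z,X)\to\CC(M,X)$, $g\mapsto g\circ f$, is a topological embedding, since subspace closure places $\CC(Z,X)$ in $\cc[\X]$.

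The main technical step is verifying the embedding property of $f^*$. Injectivity follows from the surjectivity of $f$, and continuity from the identity $(f^*)^{-1}([L,U])=[f(L),U]$ on subbasic open sets of the compact-open topology. Openness onto the image is where compact-covering becomes essential: for any subbasic open set $[K,U]\subseteq\CC(Z,X)$, one picks a compact $L\subseteq M$ with $f(L)=K$, and a direct check gives $f^*([K,U])=f^*(\CC(Z,X))\cap[L,U]$, which is open in the image. The only delicate input beyond this bookkeeping is ensuring that $M$ in Michael's theorem can be taken zero-dimensional, so that part (1) applies without needing $\II\in\X$.
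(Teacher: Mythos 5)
Your proof is correct and follows essentially the same route as the paper: Cantor-cube and Hilbert-cube universality for (1) and (2), and for (3) Michael's compact-covering theorem with a zero-dimensional domain, $\CC$-stability applied to $\CC(M,X)$, and the topological embedding $f^*\colon\CC(Z,X)\to\CC(M,X)$ (which the paper cites from Lemma~1 of O'Meara rather than verifying directly, and whose zero-dimensional refinement it obtains by composing with a perfect map instead of invoking a subspace of $\NN^\w$). The only nitpick concerns (1): if the only member of $\X$ is the empty space, you cannot ``pass to a one-point subspace''; the paper covers this degenerate case by observing that the empty Tychonoff product (the $0$-th power of the empty space) is a singleton, after which your argument proceeds verbatim.
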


\begin{proof}
(1) The class $\X$ is not empty and hence contains some topological space. Then its stable closure $[\mathcal X]$ contains the empty topological space and its 0-th power $\emptyset^0$ which is a singleton (this follows from the assumption that $[\X]$ is closed under taking countable Tychonoff products).
Being closed under countable topological sums, the class $[\X]$ contains all countable discrete spaces, in particular, the doubleton $2=\{0,1\}$. By the countable productivity,  $[\X]$ contains the Cantor cube $2^\w$ and all its subspaces. Since each zero-dimensional separable metrizable  space embeds into the Cantor cube \cite[7.8]{Ke}, the class $[\X]$ contains all zero-dimensional separable metrizable spaces.
\smallskip

(2) If $\II\in \X$, then $\II^\NN \in [\X]$ and hence $[\M_0]\subseteq [\X]$ by \cite[4.2.10]{Eng}.
\smallskip

(3) The space $Z$, being an $\aleph_0$-space, is the image of a separable metrizable space $M$ under a compact-covering map $\xi:M\to Z$ (see \cite{Mich}). Since every separable metrizable space is the image of a zero-dimensional separable metrizable space under a perfect (and hence compact-covering) map, without loss of generality we can suppose that the space $M$ is zero-dimensional and hence belongs to the class $[\X]$ by the first statement. The  $\CC$-stability of $\cc[\X]$ guarantees that $\CC(M,X)\in\cc[\X]$. The compact-covering property of the map $\xi$ implies that the dual map $\xi^*:\CC(Z,X)\to \CC(M,X)$, $\xi^*:f\mapsto f\circ\xi$, is a topological embedding.  Thus also $\CC(Z,X)$ belongs to $\cc[\X]$.
\end{proof}

Theorem \ref{t-Main-C(M)} announced in the introduction is a partial case of the following two theorems.

\begin{mytheorem}\label{t:CMemb}
The class $\cc[\M]$ coincides with the class $\cc(\M_0,\M)$.
\end{mytheorem}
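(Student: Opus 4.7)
The plan is to prove both inclusions $\cc[\M] \supseteq \cc(\M_0,\M)$ and $\cc[\M] \subseteq \cc(\M_0,\M)$. The first is immediate: if $Z \hookrightarrow \CC(X,Y)$ with $X \in \M_0$ and $Y \in \M$, then $X$ is Lindel\"of and $X, Y \in \M \subseteq \cc[\M]$, so $\CC(X,Y) \in \cc[\M]$ by $\CC$-stability, whence $Z \in \cc[\M]$ by closure under subspaces. For the substantive direction, I will verify that $\cc(\M_0,\M)$ is itself a $\CC$-stable extension of $\M$; minimality of $\cc[\M]$ then forces $\cc[\M] \subseteq \cc(\M_0,\M)$.

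For the stability verification, $\M \subseteq \cc(\M_0,\M)$ holds by taking $X$ to be a one-point space. Closure under subspaces and homeomorphic images is trivial. For a countable topological sum $\bigoplus_n Z_n$ with $Z_n \hookrightarrow \CC(X_n, Y_n)$, I take $X = \bigoplus_n X_n \in \M_0$ and $Y = (\bigoplus_n Y_n) \sqcup \{*\} \in \M$, where $*$ is an isolated point outside every $Y_n$, and send $z \in Z_n$ to the tuple in $\CC(X,Y) \cong \prod_m \CC(X_m, Y)$ whose $n$-th coordinate is the image of $z$ in $\CC(X_n, Y_n) \subseteq \CC(X_n, Y)$ and whose other coordinates are constant at $*$; the separator $*$ makes this an embedding. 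For countable products,
\[
\prod_n Z_n \hookrightarrow \prod_n \CC(X_n, Y_n) \hookrightarrow \prod_n \CC\bigl(X_n, \bigoplus_m Y_m\bigr) \cong \CC\bigl(\bigoplus_n X_n, \bigoplus_n Y_n\bigr).
\]

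The key step is the $\CC$-stability clause: given $Z$ Lindel\"of and $Z, W \in \cc(\M_0,\M)$, I must place $\CC(Z,W)$ into $\cc(\M_0,\M)$. First, every space in $\cc(\M_0,\M)$ is an $\aleph$-space: by Foged's extension of Michael's theorem, $\CC(X_1,Y_1)$ is a paracompact $\aleph$-space for $X_1 \in \M_0, Y_1 \in \M$, and subspaces inherit this. So the Lindel\"of $Z$ is in fact an $\aleph_0$-space, and Michael's theorem yields a compact-covering surjection $\xi \colon M \to Z$ with $M$ zero-dimensional separable metric, inducing an embedding $\xi^* \colon \CC(Z,W) \hookrightarrow \CC(M,W)$. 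Choosing an embedding $i \colon W \hookrightarrow \CC(X_2, Y_2)$ with $X_2 \in \M_0, Y_2 \in \M$, postcomposition yields $i_* \colon \CC(M,W) \hookrightarrow \CC(M, \CC(X_2,Y_2))$. I then consider
\[
\psi \colon \CC(Z,W) \to \CC(M \times X_2,\, Y_2),\qquad \psi(f)(m,x) = i(f(\xi(m)))(x),
\]
whose target lies in $\cc(\M_0,\M)$ because $M \times X_2 \in \M_0$ and $Y_2 \in \M$. That $\psi(f)$ is continuous follows from Ascoli's theorem applied to the compact subset $i(f(\xi(K_M))) \subseteq \CC(X_2,Y_2)$, using that $X_2$ is metric (hence a $k$-space) and that $M \times X_2$ is metric; injectivity of $\psi$ is immediate.

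The main obstacle is showing that $\psi$ is a topological embedding, since the naive adjunction $\CC(M, \CC(X_2,Y_2)) \to \CC(M \times X_2, Y_2)$ is in general discontinuous without local compactness of $X_2$. The resolution is to argue directly, using the classical fact that for a compact domain $K_Z$ and a uniform target $T$, the compact-open topology on $\CC(K_Z, T)$ coincides with the topology of uniform convergence on $K_Z$. Applying this with $K_Z = \xi(K_M)$ and $T = \CC(X_2, Y_2)$ equipped with the uniformity of uniform convergence on compacta, a net $f_\alpha \to f$ in $\CC(Z,W)$ passes through $i_* \circ \xi^*$ to give, for every compact $K_X \subseteq X_2$ and any compatible metric $d$ on $Y_2$,
\[
\sup_{z \in K_Z,\, x \in K_X} d\bigl( i(f_\alpha(z))(x),\, i(f(z))(x) \bigr) \longrightarrow 0.
\]
Since every compact $K \subseteq M \times X_2$ lies inside a box $K_M \times K_X$ with $K_M = \pi_1(K), K_X = \pi_2(K)$, this upgrades to uniform convergence of $\psi(f_\alpha) \to \psi(f)$ on $K$, hence convergence in $\CC(M \times X_2, Y_2)$, so $\psi$ is continuous. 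Conversely, the always-continuous inverse adjunction $\phi_1 \colon \CC(M \times X_2, Y_2) \to \CC(M, \CC(X_2,Y_2))$ sends $\psi(f)$ to $i_* \xi^*(f)$, so $\psi(f_\alpha) \to \psi(f)$ in the target forces $i_* \xi^*(f_\alpha) \to i_* \xi^*(f)$, and since $\xi^*$ and $i_*$ are embeddings, $f_\alpha \to f$. Thus $\psi$ is a topological embedding, completing the proof.
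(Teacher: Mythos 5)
Your proposal is correct and follows essentially the same route as the paper: the easy inclusion from $\CC$-stability of $\cc[\M]$, then minimality after verifying that $\cc(\M_0,\M)$ is closed under countable sums (same separator-point construction), countable products, and function spaces with Lindel\"of domain via Foged's theorem, Michael's compact-covering map, and the exponential correspondence $\CC(M,\CC(A,B))\cong\CC(M\times A,B)$. The only deviations are cosmetic: you embed countable products directly into $\CC\bigl(\bigoplus_n X_n,\bigoplus_n Y_n\bigr)$ where the paper uses $\prod_n X_n\subset X^\w=\CC(\w,X)$, and you re-prove the exponential homeomorphism by hand (uniform convergence on compacta plus even continuity), whereas the paper simply cites Theorem 3.4.9 of Engelking, which already covers this case because $M\times A$ is metrizable and hence a $k$-space.
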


\begin{proof}
The definition of the class $\cc[\M]$ guarantees that $\cc(\M_0,\M)\subset \cc[\M]$. By the minimality of $\cc[\M]$, the reverse inclusion will be proved as soon as we will check that the class $\cc(\M_0,\M)$ is  closed under taking countable topological sums, countable Tychonoff products and taking function spaces with Lindel\"{o}f domain.
\smallskip

To see that the class $\cc(\M_0,\M)$ is closed under taking countable topological sums, we need to prove that for any non-empty spaces $X_n\in\M_0$ and $Y_n\in\M$, $n\in\w$, the topological sum $\bigoplus_{n\in\w}\CC(X_n,Y_n)$ embeds into the function space $\CC(X,Y)$ for some spaces $X\in\M_0$ and $Y\in\M$. Fix a singleton $\ast$. Now we consider the topological sums $X=\bigoplus_{n\in\w}X_n$ and $Y=\{\ast\}\oplus\bigoplus_{n\in\w}Y_n$ and the topological embedding
\[
e:\textstyle{\bigoplus\limits_{n\in\w}}\CC(X_n,Y_n)\hookrightarrow \CC(X,Y)
\]
assigning to each function $f\in \CC(X_n,Y_n)$, $n\in\w$, the function $\tilde f\in \CC(X,Y)$ defined by
\[
\tilde f(x)=\begin{cases}f(x), &\mbox{if $x\in X_n$};\\
\ast , & \mbox{if $x\in X_m$ and $m\ne n$.}
\end{cases}
\]
Since $X\in\M_0$ and $Y\in\M$, we conclude that $\CC(X,Y)\in \cc(\M_0,\M)$, and hence $\bigoplus_{n\in\w}\CC(X_n,Y_n)\in \cc(\M_0,\M)$.
\smallskip

Next we prove that for any Lindel\"of space $X\in \cc(\M_0,\M)$ and any space $Y\in \cc(\M_0,\M)$ the function space $\CC(X,Y)$ belongs to the class $\cc(\M_0,\M)$. The Foged theorem~\cite{foged} implies that the space $X$ is an $\aleph$-space. Being Lindel\"of, $X$ is an $\aleph_0$-space. By \cite{Mich}, the space $X$ is the image of a separable metrizable space $M$ under a compact-covering map. Hence, by Lemma 1 of \cite{OMe2}, the function space $\CC(X,Y)$ embeds into the function space $\CC(M,Y)$. So, it is enough to prove that the function space $\CC(M,Y)$ belongs to the class $\cc(\M_0,\M)$. By definition, the space $Y$ embeds into the function space $\CC(A,B)$ for some spaces $A\in\M_0$ and $B\in\M$. Consequently, the function space $\CC(M,Y)$ embeds into the function space $\CC(M,\CC(A,B))$. By Theorem 3.4.9 of \cite{Eng}, the map $\CC(M,\CC(A,B))\to \CC(M\times A,B)$ assigning to each function $f:M\to \CC(A,B)$ the function $\tilde f:M\times A\to B$, $\tilde f:(m,a)\mapsto f(m)(a)$, is a homeomorphism. Since $M\times A$ is a separable metrizable space, the function space $\CC(M\times A,B)$ belongs to the class $\cc(\M_0,\M)$, and hence  the spaces $\CC(M,\CC(A,B))$, $\CC(M,Y)$ and $\CC(X,Y)$ also belong to $\cc(\M_0,\M)$.
\smallskip

Finally we show that the class $\cc(\M_0,\M)$ is closed under taking countable Tychonoff products. Fix any spaces $X_n\in \cc(\M_0,\M)$, $n\in\w$. As the class $\cc(\M_0,\M)$ is closed under taking countable topological sums, the topological sum $X=\bigoplus_{n\in\w}X_n$ belongs to the class $\cc(\M_0,\M)$. Since the class $\cc(\M_0,\M)$ is closed also under taking function spaces with Lindel\"of domain, the function space $\CC(\w,X)$ belongs to the class $\cc(\M_0,\M)$. Taking into account that $\prod_{n\in\w}X_n\subset X^\w=\CC(\w,X)\in \cc(\M_0,\M)$, we conclude that $\prod_{n\in\w}X_n\in \cc(\M_0,\M)$.
\end{proof}

Below we characterize the class $\cc[\M_0]$.

\begin{mytheorem}\label{t:CM0}
For a topological space $X$ the following conditions are equivalent:
\begin{enumerate}
\item[{\rm (1)}] $X$ belongs to the class $\cc[\M_0]$;
\item[{\rm (2)}] $X$ is Lindel\"of and belongs to the class $\cc[\M]$;
\item[{\rm (3)}] $X$ embeds into the function space $\CC(Z,Y)$ for some spaces $Z,Y\in\M_0$;
\item[{\rm (4)}] $X$ embeds into the function space $\CC(Z)$ for some zero-dimensional space $Z\in\M_0$.
\end{enumerate}
\end{mytheorem}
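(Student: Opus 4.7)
The plan is to establish the cycle $(4) \Rightarrow (3) \Rightarrow (1) \Rightarrow (2) \Rightarrow (3)$, complemented by $(3) \Rightarrow (4)$. The implication $(4) \Rightarrow (3)$ is trivial (take $Y = \IR \in \M_0$), and $(3) \Rightarrow (1)$ is immediate from the $\CC$-stability of $\cc[\M_0]$: for $Z, Y \in \M_0$ the Lindel\"of space $Z$ gives $\CC(Z,Y) \in \cc[\M_0]$, and the class is closed under subspaces. To obtain $(1) \Rightarrow (2)$, I would first verify, by mirroring the proof of Theorem \ref{t:CMemb} almost verbatim, that the class $\cc(\M_0,\M_0)$ is itself a $\CC$-stable extension of $\M_0$; the only extra thing to check is that the auxiliary spaces $\bigoplus_n X_n$ and $M \times A$ appearing in that proof stay separable metrizable. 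Minimality of $\cc[\M_0]$ then yields $\cc[\M_0] \subseteq \cc(\M_0,\M_0)$, and by Michael's theorem any $\CC(Z,Y)$ with $Z,Y \in \M_0$ is an $\aleph_0$-space, hence hereditarily Lindel\"of; together with the obvious inclusion $\cc[\M_0] \subseteq \cc[\M]$ this yields (2).

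The heart of the argument is $(2) \Rightarrow (3)$: given $X$ Lindel\"of with $X \hookrightarrow \CC(Z,W)$ for some $Z \in \M_0$ and $W \in \M$ (guaranteed by Theorem \ref{t:CMemb}), I would replace the codomain $W$ by a separable metrizable subspace. Fix a countable dense set $D \subset Z$. For each $z \in D$ the point-evaluation $\mathrm{ev}_z : X \to W$, $f \mapsto f(z)$, is continuous, hence $\mathrm{ev}_z(X)$ is Lindel\"of; as a Lindel\"of subspace of the metrizable space $W$ it is second countable, in particular separable. Therefore $S := \bigcup_{z \in D}\mathrm{ev}_z(X)$ is separable, and its closure $Y := \overline{S}$ in $W$ is separable metrizable. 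For every $f \in X$ we have $f(D) \subset S$, and by the continuity of $f$ together with the density of $D$, $f(Z) \subset \overline{f(D)} \subset Y$; hence $X \subset \CC(Z,Y)$, and since $\CC(Z,Y)$ carries the subspace topology from $\CC(Z,W)$ (subbasic sets $[K; V \cap Y]$ are restrictions of $[K; V]$), this is the required embedding into a function space with both $Z$ and $Y$ in $\M_0$.

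For $(3) \Rightarrow (4)$, I embed $Y \in \M_0$ into $\IR^\w$ and use the exponential laws $\CC(Z,\IR^\w) \cong \CC(Z,\IR)^\w \cong \CC\bigl(\bigoplus_{n\in\w}Z,\IR\bigr)$ to produce an embedding $X \hookrightarrow \CC(Z')$ with $Z' := \bigoplus_{n\in\w}Z \in \M_0$. Then, as in the proof of Proposition \ref{p:Ascoli}(3), take a perfect (hence compact-covering) surjection $\xi : M \to Z'$ from a zero-dimensional separable metrizable space $M$; the dual map $\xi^* : \CC(Z') \to \CC(M)$ is a topological embedding, which delivers (4). I expect the main obstacle to be the step $(2) \Rightarrow (3)$, where the Lindel\"ofness of $X$ is used essentially in order to force each $\mathrm{ev}_z(X)$ to be separable; the remaining implications are formal or are minor variations of arguments already in the paper.
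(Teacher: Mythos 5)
Your proposal is correct, and on the crucial implication it takes a genuinely different route from the paper. For (2)$\Rightarrow$(3) the paper first upgrades $X$ to an $\aleph_0$-space (Foged's theorem plus Lindel\"ofness), writes $X$ as a compact-covering image $\xi:A\to X\subset \CC(Z,W)$ of a separable metrizable space, proves continuity of the evaluation $(a,z)\mapsto\xi(a)(z)$ on $A\times Z$ via Ascoli's theorem applied to convergent sequences, and takes the separable image $Y_0=\varphi(A\times Z)$ as the new codomain. You instead use the Lindel\"ofness of $X$ directly: evaluations $\mathrm{ev}_z$ at points $z$ of a countable dense set $D\subset Z$ are continuous, so each $\mathrm{ev}_z(X)$ is a Lindel\"of, hence second countable, subspace of the metrizable $W$; the closure $Y$ of $\bigcup_{z\in D}\mathrm{ev}_z(X)$ is then separable metrizable, and $f(Z)\subset\overline{f(D)}\subset Y$ for every $f\in X$, with the compact-open topology on $C(Z,Y)$ coinciding with the subspace topology from $\CC(Z,W)$. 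This is more elementary -- it needs no $\aleph$-space theory, no compact-covering parametrization and no even-continuity argument at this point -- while the paper's evaluation-map argument encodes the same separability conclusion in a form it reuses later (e.g.\ in the proposition on universal spaces); your argument would serve there just as well. Your (1)$\Rightarrow$(2) is heavier than the paper's (which simply notes that spaces in $\cc[\M_0]$ are $\aleph_0$-spaces, hence Lindel\"of, by Michael's stability results), since you re-derive $\cc[\M_0]\subseteq\cc(\M_0,\M_0)$ by mirroring Theorem \ref{t:CMemb}; this is correct, and in fact that containment already yields (1)$\Rightarrow$(3) directly, so your detour through (2) is slightly redundant but harmless. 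The remaining implications ((4)$\Rightarrow$(3), (3)$\Rightarrow$(1), (3)$\Rightarrow$(4) via $\IR^\w$, the exponential law and O'Meara's dual-embedding lemma) coincide with the paper's.
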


\begin{proof}
(1)$\Ra$(2): As $\cc[\M_0]\subset \cc[\M]$, we have also $X\in \cc[\M]$. The space $X$, being an $\aleph_0$-space, is Lindel\"of by \cite{Mich}.
\smallskip

(2)$\Ra$(3): Assume that $X$ is a Lindel\"of space from the class $\cc[\M]$. By Theorem~\ref{t:CMemb}, $X$ embeds into the function space $\CC(Z,Y)$ for some spaces \mbox{$Z\in\M_0$} and $Y\in\M$. The Foged theorem~\cite{foged} implies that the space $X$ is an $\aleph$-space. Being Lindel\"of, $X$ is an $\aleph_0$-space (see \cite{GK2}). By \cite{Mich}, $X$ is the image of a separable metrizable space $A$ under a compact-covering continuous map $\xi:A\to X\subset \CC(Z,Y)$. Consider the map $\varphi:A\times Z\to Y$ assigning to each pair  $(a,z)\in A\times Z$ the value $\xi(a)(z)$ of the function $\xi(a)$ at the point $z$. We claim that this map is continuous at each point $(a,z)\in A\times Z$. Take any sequence $\big((a_k,z_k)\big)_{k\in\w}$ in $A\times Z$ converging to $(a,z)$ in the  separable metrizable space $A\times Z$, and any neighborhood $O_{\varphi(a,z)}\subset Y$ of the point $\varphi(a,z)$. For every $k\in\w$ consider the function $f_k :=\xi(a_k)\in X\subset \CC(Z,Y)$ and observe that the sequence $\{ f_k\}_{k\in\w}$ converges to the function $f:=\xi(a)$. By Ascoli's theorem  \cite[3.4.20]{Eng}, the compact subset $\K :=\{f\}\cup\{f_k\}_{k\in\w}\subset \CC(Z,Y)$ is evenly continuous, that allows us to find a neighborhood $U_f$ of $f$ in $\K$ and a neighborhood $O_z$ of $z$ such that $U_f(O_z)\subset O_{\varphi(a,z)}$. By the continuity of the map $\xi$, there is a number $k_0\in\w$ such that $\xi(a_k)\in U_f$ for all $k\ge k_0$. Then for any $k\ge k_0$ with $z_k\in O_z$ we get $\varphi(a_k,z_k)=f_k(z_k)\in U_f(O_z)\subset O_{\varphi(a,z)}$.
So, the map $\varphi:A\times Z\to Y$ is continuous, and hence its image $Y_0:=\varphi(A\times Z)\subset Y$ is separable. Then $X\subset \CC(Z,Y_0)\subset \CC(Z,Y)$, where the spaces $Z$ and $Y_0$ are  separable and metrizable.
\smallskip

(3)$\Ra$(4): Assume that $X$ embeds into the function space $\CC(A,B)$ for some separable metrizable spaces $A$ and $B$. Since the separable metrizable space $B$ embeds into the countable product $\IR^\w$, the function space $\CC(A,B)$ embeds into the function space $\CC(A,\IR^\w)$, which is homeomorphic to the function space $\CC(A\times \w,\IR)=\CC(A\times\w)$. The separable metrizable space $A\times\w$ can be written as the image of a zero-dimensional separable metrizable space $Z$ under a compact-covering map $\eta:Z\to A\times\w$. Then the dual map $\eta^*:\CC(A\times\w)\to \CC(Z)$, $\eta^*:f\mapsto f\circ\eta$, is a topological embedding (see Lemma~1 in \cite{OMe2}). Thus
$$
X\hookrightarrow \CC(A,B)\hookrightarrow  \CC(A,\IR^\w)=\CC(A\times\w)\hookrightarrow \CC(Z).
$$

(4)$\Ra$(1): Assume that $X$ embeds into the function space $\CC(Z)$ for some zero-dimensional separable metrizable space $Z$. Since $Z,\IR\in\M_0$, the function space $\CC(Z)=\CC(Z,\IR)$ as well as its subspace $X$ belong to the class $\cc[\M_0]$.
\end{proof}


\section{Proof of Theorem \ref{c-Pyt-A-Met}}\label{s:saak}


In this section we prove a more general result than Theorem \ref{c-Pyt-A-Met}. For this purpose we need some definitions.

A family $\I$ of compact subsets of a topological space $X$ is called an {\em ideal of compact sets} if $\bigcup\I=X$ and for any sets $A,B\in\I$ and any compact subset $K\subset X$ we get $A\cup B\in\I$ and $A\cap K\in\I$, i.e. if $\I$ covers $X$ and is closed under taking finite unions and closed subspaces.

For any two topological spaces $X$ and $Y$, each ideal $\I$ of compact subsets of $X$ determines the {\em $\I$-open topology} on the space $C(X,Y)$ of continuous functions from $X$ to $Y$. A subbase of this topology consists of the sets
\[
[K;U]=\{f\in C(X,Y):f(K)\subset U\},
\]
where $K\in\I$ and $U$ is an open subset of $Y$. The space  $C(X,Y)$ endowed with the $\I$-open topology will be denoted by $C_\I(X,Y)$. So, $\CC(X,Y)=C_\I(X,Y)$ for the ideal $\I$ of all compact subsets of $X$. For the ideal $\I$ of all finite subsets of $X$, the function space $C_\I(X,Y)$ is denoted by $C_p(X,Y)$.

We are interested in detecting $\Pp$-spaces among the function spaces $C_\I(X,Y)$.

\begin{mydefinition}
An ideal  $\I$  of compact subsets of a topological space $X$ is called {\em discretely-complete} if for any compact subsets $A,B\subset X$ with countable discrete difference $B\setminus A$, the inclusion $A\in\I$ implies $B\in\I$.
\end{mydefinition}

Since the ideal of all compact subsets of a topological space is trivially discretely-complete, the following theorem implies Theorem \ref{c-Pyt-A-Met}.
\begin{mytheorem}\label{saak}
For any discretely-complete ideal $\I$ of compact subsets of an \mbox{$\aleph_0$-space} $X$ and any metrizable space $Y$ the function space $C_\I(X,Y)$ is a paracompact  $\Pp$-space. If $Y$ is separable, then $C_\I(X,Y)$ is a  $\Pp_0$-space.
\end{mytheorem}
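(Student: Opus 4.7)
The proof will construct a $\sigma$-locally finite $cp$-network on $C_\I(X,Y)$, countable when $Y$ is separable; the $\Pp$-space (resp.\ $\Pp_0$-space) conclusion is then immediate, and paracompactness follows from a Foged-style cushioned-pair argument applied to this network. The plan is to take the Foged--Michael $k$-network for $C_k(X,Y)$ as a starting point and upgrade it to a $cp$-network by exploiting the discrete-completeness of $\I$. Since $X$ is an $\aleph_0$-space, I fix a countable $k$-network $\Nn=\{N_n\}_{n\in\omega}$ of compact subsets of $X$ (obtained via a compact-covering map from a separable metric space onto $X$), closed under finite unions and with the strong refinement property: for every compact $K$ and finite open cover $\{W_j\}$ of $K$, some finite subfamily of $\Nn$ covers $K$ with each member contained in some $W_j$. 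Since $Y$ is metrizable, I fix a $\sigma$-locally finite open base $\mathcal B=\bigcup_{m\in\omega}\mathcal B_m$ of $Y$, countable if $Y$ is separable. For each multi-index $(\bar n,\bar m)=((n_1,\dots,n_k),(m_1,\dots,m_k))$ and each $(B_1,\dots,B_k)\in\mathcal B_{m_1}\times\cdots\times\mathcal B_{m_k}$, put
\[
F_{\bar n,\bar B}=\{f\in C_\I(X,Y):f(N_{n_i})\subset B_i,\;i=1,\dots,k\},
\]
and let $\F$ be the union of all these families. Standard arguments using local finiteness of each $\mathcal B_{m_i}$ show that $\F$ is $\sigma$-locally finite in $C_\I(X,Y)$ and countable when $\mathcal B$ is.

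The $k$-network clause at $f$ for a basic $C_\I$-neighborhood $V=\bigcap_{j\le t}[K_j,U_j]$ with $K_j\in\I$ is handled by the standard Foged construction: cover each compact $f(K_j)\subset U_j$ by finitely many small-mesh $B_{j,l}\in\mathcal B_{m_j}$ with $\overline{B_{j,l}}\subset U_j$, apply the strong refinement to the open cover $\{f^{-1}(B_{j,l})\}_l$ of $K_j$ to extract $N_{n_{j,l,s}}$ covering $K_j$ each sitting inside $f^{-1}(B_{j,l})$, and assemble into $F\in\F$ with $f\in F\subset V$. The delicate step is the accumulation clause: for $A\subset C_\I(X,Y)$ accumulating at $f$, some admissible $F$ must satisfy $|F\cap A|=\aleph_0$. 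The approach I would take is to show that an admissible $F$ can be chosen so that $F$ actually contains a basic $C_\I$-open neighborhood of $f$, so that the accumulation of $A$ at $f$ forces $F\cap A$ to be infinite automatically. Concretely, set $K=\bigcup_j K_j\in\I$ and work with the $\I$-traces $N_{n_{j,l,s}}\cap K$ wherever possible; the portions of the $N_{n_{j,l,s}}$ that extend beyond $K$ must be shown harmless, which is precisely where discrete-completeness enters. Should the constructed $F$ fail the accumulation clause, extract a sequence $(g_i)\subset A\cap V\setminus F$ with $g_i|_K\to f|_K$ uniformly (possible because $C(K,Y)$ is metric), producing for each $i$ an escape witness $x_i\in N_{n_{j_i,l_i,s_i}}\setminus K$ with $g_i(x_i)\notin B_{j_i,l_i}$; after pigeonholing so that $(j_i,l_i,s_i)=(j,l,s)$ is constant, extract a convergent subsequence $x_i\to x_\infty\in N_{n_{j,l,s}}$, pick $A_0\in\I$ with $x_\infty\in A_0$ (available since $\bigcup\I=X$), and apply discrete-completeness of $\I$ to the compact set $K^\star=K\cup A_0\cup\{x_i\}_{i\in\omega}$ whose difference with $K\cup A_0$ is countable discrete, to conclude $K^\star\in\I$. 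Covering $f(K^\star)$ by base elements of mesh small enough that any one hitting an $f(x_i)$ is forced inside $B_{j,l}$, then extracting a second Foged-style refinement adapted to $K^\star$, supplies a finer basic $C_\I$-open neighborhood $V^\star\subset V$ of $f$ contained in a suitable admissible $F^\star\in\F$; the accumulation of $A$ at $f$ yields $|F^\star\cap A|=\aleph_0$, contradicting the hypothesized failure.

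Paracompactness of $C_\I(X,Y)$ then follows from the Foged cushioned-pair method applied to $\F$: pair each $F_{\bar n,\bar B}$ with a slight open enlargement obtained by replacing each $B_i$ with a larger base element $B_i^\star\supset\overline{B_i}$ from $\mathcal B$, giving a $\sigma$-cushioned pair-base, whence paracompactness by Michael's criterion. When $Y$ is separable, $\mathcal B$ and hence $\F$ are countable, so $C_\I(X,Y)$ is Lindel\"{o}f, i.e., a $\Pp_0$-space. The main obstacle I expect is the accumulation step: the three inputs---the strong $k$-network of $X$, the $\sigma$-locally finite base of $Y$, and the discrete-completeness of $\I$---must be orchestrated with care, and getting the quantifier order right (first shrink $V$ around $f$, then extract escape witnesses from a hypothetical failure, then enlarge the $\I$-test set to absorb them, then re-invoke accumulation) is the heart of the argument. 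It is exactly this step where the discrete-completeness hypothesis on $\I$ is indispensable, since it is the axiom that lets an escape sequence together with its limit be promoted to an $\I$-test set and thereby converts a Foged-style $k$-network into a genuine $cp$-network.
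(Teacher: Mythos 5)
Your proposal contains genuine gaps, both in the setup and at the decisive step.

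First, your construction starts from ``a countable $k$-network $\Nn=\{N_n\}_{n\in\omega}$ of \emph{compact} subsets of $X$''. An $\aleph_0$-space need not possess such a family: for $X=\mathbb{Q}$ every compact subset is nowhere dense, so given any countable family $\{K_n\}$ of compacta one can pick $x_k\in B(x,1/k)\setminus(K_1\cup\dots\cup K_k)$ and obtain a convergent sequence $S=\{x\}\cup\{x_k\}_k$ which is contained in no finite union of the $K_n$; hence no countable $k$-network of compact sets exists, and a compact-covering map from a separable metric space does not produce one (images of closed basic sets need not be compact). This is not cosmetic: the compactness (and metrizability) of $N_{n_{j,l,s}}$ is exactly what you use to extract the convergent subsequence of escape witnesses $x_i$, and without it the witnesses need not cluster, $K^\star$ need not be compact, and discrete-completeness cannot be applied. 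The paper's proof works with an arbitrary countable $k$-network closed under finite unions and intersections, and manufactures the needed compactness differently: for each $C_i\in\I$ it intersects an enumeration of \emph{all} network members squeezed between $C_i$ and $f^{-1}(U_i)$, obtaining a decreasing sequence $K_{i,j}$ converging to $C_i$; it is this shrinking that makes the set $C_m\cup\{x_j\}_{j\in J_m}$ compact, with no compactness assumed of the network members themselves.

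Second, the heart of your accumulation argument is unsound. An admissible element $F=\bigcap_i[N_i;B_i]$ of the candidate network can essentially never contain a basic $C_\I$-open neighborhood of $f$: the sets $N_i$ need not belong to $\I$, and since the $\aleph_0$-space $X$ is normal one can modify $f$ off any finite union of members of $\I$ to violate $g(N_i)\subset B_i$, so the constraint on $N_i$ is not implied by finitely many $\I$-constraints. Your concluding step also has the inclusion reversed: a Foged-style refinement adapted to $K^\star$ yields $F^\star$ \emph{contained in} a neighborhood of $f$, not containing $V^\star$, and knowing $A\cap V^\star$ is infinite says nothing about $A\cap F^\star$. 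Finally, the logic does not close: the hypothesis to be contradicted is that \emph{every} admissible element meets $A$ in a finite set, whereas your witnesses come from a single sequence $(g_i)$ extracted from $A$, so excluding those particular $g_i$ from some neighborhood does not contradict accumulation of $A$ at $f$. The paper's proof instead shows $O_f=\bigcup_j\FF_j$ for an increasing sequence of network elements, assumes each $\FF_j\cap A$ finite, attaches an escape witness $x_\alpha$ to all but finitely many points $\alpha\in A$ themselves, collects them into the single compact set $C'=C_s\cup\{x_\alpha\}_{\alpha\in A'}$, places $C'\in\I$ by discrete-completeness, and observes that $[C';U_s]$ is then a neighborhood of $f$ missing every $\alpha\in A'$, i.e.\ all but finitely many points of $A$ --- a genuine contradiction with accumulation. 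Your instinct that discrete-completeness is what promotes witnesses plus their limit into an $\I$-test set is correct, but it must be applied to witnesses indexed by the whole of $A$ relative to such an exhaustion of $O_f$, not to one sequence relative to one network element.
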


\begin{proof}
For the $\aleph_0$-space $X$ fix a countable $k$-network  $\KK$, which is closed under taking finite unions and finite intersections. For the metrizable space $Y$ fix a  $\sigma$-locally finite base of the topology  $\DD =\bigcup_{j\in\w} \DD_j$ (which exists by the Nagata-Smirnov metrization theorem \cite[4.4.7]{Eng}). We claim that the family
\[
[\kern-2pt[ \KK; \DD ]\kern-1.5pt]=\big\{[K_1;D_1]\cap \dots\cap [K_n;D_n]:K_1,\dots,K_n\in\K,\;D_1,\dots,D_n\in\mathcal D\big\}
\]
is a $\sigma$-locally finite $cp$-network in $C_\I(X,Y)$.

To see that the family $[\kern-1.5pt[ \KK; \DD ]\kern-1.5pt]$ is $\sigma$-locally finite in $C_\I(X,Y)$, write it as
\[
[\kern-1.5pt[ \KK; \DD ]\kern-1.5pt] = \bigcup_{n\in\IN}\bigcup_{K_1,\dots,K_n\in\K} \bigcup_{(l_1,\dots,l_n)\in\w^n} [\kern-1.5pt[ K_1; \DD_{l_1} ]\kern-1.5pt] \wedge\cdots\wedge [\kern-1.5pt[ K_n; \DD_{l_n} ]\kern-1.5pt] ,
\]
where
\[
[\kern-1.5pt[ K_1; \DD_{l_1} ]\kern-1.5pt] \wedge\cdots\wedge [\kern-1.5pt[ K_n; \DD_{l_n} ]\kern-1.5pt] := \big\{[K_1;D_1]\cap\dots\cap [K_n;D_n]:D_1\in \DD_{l_1},...,D_n\in\DD_{l_n}\big\}.
\]
We claim that for any $K\in\K$ and $l\in\w$ the family $[\kern-1.5pt[ K; \DD_{l} ]\kern-1.5pt]$ is locally finite in $C_\I(X,Y)$. Given any function $f\in C_\I(X,Y)$, choose a point $x\in K$. As $\bigcup\I=X$, the singleton $\{x\}$ belongs to the ideal $\I$. Since the family $\DD_l$ is locally finite in $Y$, the point $f(x)\in Y$ has a  neighborhood \mbox{$V\subset Y$} meeting only finitely many members of $\DD_l$. Then the open neighborhood $[\{ x\}; V]\subset C_\I(X,Y)$ of $f$ meets only those elements $[K;D]\in [\kern-1.5pt[ K; \DD_l]\kern-1.5pt]$ for which $D$ intersects $V$. By  the choice of $V$, the number of such elements is finite. So, the family $[\kern-1.5pt[K,\DD_l]\kern-1.5pt]$ is locally finite and therefore so is the family $[\kern-1.5pt[ K_1; \DD_{l_1} ]\kern-1.5pt] \wedge\cdots\wedge [\kern-1.5pt[ K_n; \DD_{l_n} ]\kern-1.5pt]$ for every sets $K_1,\dots,K_n\in\K$ and every numbers $l_1,\dots,l_n\in\w$.
\smallskip

Now we prove that the family $[\kern-1.5pt[\K;\DD]\kern-1.5pt]$ is a $cp$-network for $C_\I(X,Y)$. Fix any function $f\in C_\I(X,Y)$, an open neighborhood $O_f\subset C_\I(X,Y)$ of $f$ and a subset $A\subset C_\I(X,Y)$ containing $f$ in its closure. We lose no generality assuming that the neighborhood $O_f$ is of basic form $$O_f = [C_1;U_1]\cap\cdots\cap [C_n;U_n]$$for some compact sets $C_1,\dots,C_n\in\I$ and some open sets $U_1,\dots,U_n\in\DD$.

For every $i\in\{ 1,\dots, n\}$, consider the countable family
\[
\KK_i :=\{ K\in\KK : C_i \subset K\subset f^{-1}(U_i)\},
\]
and let $\KK_i = \{ K'_{i,j} \}_{j\in\w}$ be its enumeration. For every $j\in\NN$ we  put  $K_{i,j} := \bigcap_{k\leq j} K'_{i,k}$. It follows that the decreasing sequence $\{ K_{i,j}\}_{j\in\w}$ converges to $C_i$ in the sense that each open neighborhood of $C_i$ contains all but finitely many sets $K_{i,j}$.
Then the sets
\[
\FF_j := \bigcap_{i=1}^n [K_{i,j};U_{i}] \in [\kern-1.5pt[ \KK; \DD ]\kern-1.5pt],\;\;j\in\w,
\]
form an increasing sequence of sets in the function space $\CC(X,Y)$. We claim that
\[
\bigcap_{i=1}^n [C_i; U_i] = O_f = \bigcup_{j\in\w} \FF_j.
\]

Suppose for a contradiction that there exists a function $g\in  \bigcap_{i=1}^n [C_i; U_i]$ which does not belong to $\bigcup_{j\in\NN} \FF_j$. Then for every $j\in\w$ we can find an index $i_j\in \{1,\dots,n\}$ such that $g\not\in [K_{i_j,j};U_{i_j}]$. This means that $g(x_j)\not\in U_{i_j}$ for some point $x_j\in K_{i_j,j}$. By the Pigeonhole Principle, there is $m\in \{1,\dots,n\}$ such that the set $J_m := \{ j\in\NN : i_j =m\}$ is infinite. As the decreasing sequence $\{ K_{m,j} \}_{j\in J_m}$ converges to the compact set $C_m$, the set $C_m \cup \{ x_j\}_{j\in J_m}$ is compact.

Since any compact subset of the $\aleph_0$-space $X$ is metrizable, we can find an infinite subset $J'$ of $J_m$ such that the sequence  $\{ x_j\}_{j\in J'}$ converges to some point $x_0 \in C_m$. As $g$ is continuous, the sequence $\{ g(x_j)\}_{j\in J'}$ converges to the point $g(x_0) \in g(C_m) \subset U_m$,  and hence we can assume also that \mbox{$g(x_j)\in U_m$} for every $j\in J'$. But this contradicts the choice of the points $x_j$ and therefore proves the equality $O_f=\bigcup_{j\in\w}\F_j$.

It follows that $f\in\F_j\subset O_f$ for some $j\in\w$, which means that the family $[\kern-1.5pt[\mathcal K;\mathcal D]\kern-1.5pt]$ is a network in the function space $C_\I(X;Y)$. So without loss of generality we can assume that $f\in \F_j$ for every $j\in\w$.
\smallskip

Now, assuming that the set $A$ accumulates at $f$, we shall prove that for some $j\in\w$ the  intersection $\F_j\cap A$ is infinite. Replacing $A$ by $O_f\cap A$, we can assume that $A\subset O_f$.
Suppose for a contradiction that for every $j\in\w$ the intersection $A_j := \FF_j \cap A$ is finite. Then $A=A\cap O_f = \bigcup_{j\in\w} A_j$ is the countable union of the increasing sequence $\{ A_j\}_{j\in\w}$ of finite subsets of $C_\I(X,Y)$.

For every function $\alpha\in A\setminus A_0$ we denote by $j_\alpha$ the unique natural number such that $\alpha\in A_{j_\alpha +1} \setminus A_{j_\alpha} = A_{j_\alpha +1} \setminus \FF_{j_\alpha}$. Since $\alpha\not\in \FF_{j_\alpha} =\bigcap_{i=1}^n [K_{i,j_\alpha};U_{i}]$, there is an index $i_\alpha\in\{1,\dots,n\}$ such that $\alpha\not\in [K_{i_\alpha,j_\alpha};U_{i_\alpha}]$ and a point $x_\alpha\in K_{i_\alpha,j_\alpha}$ such that $\alpha(x_\alpha) \not\in U_{i_\alpha}$.

For every $i\in\{1,\dots,n\}$ consider the subsequence
\[
A(i):= \{ \alpha\in A\setminus A_0 : \ i_\alpha = i\}
\]
and observe that $A\setminus A_0 = \bigcup_{i=1}^n A(i)$. So, there exists $s \in\{1,\dots,n\}$ such that the set $A(s)$ is infinite and $f\in \overline{A(s)}$.

Taking into account that the sequence of sets $\{ K_{s,j}\}_{j\in\w}$ converges to the compact set $C_{s}$, we can find a finite subset $B$ of $A(s)$ such that $f(x_\alpha)\in U_{s}$ for every $\alpha\in A(s) \setminus B$. Put
\[
A' := A(s) \setminus B \quad \mbox{ and } \quad C' := C_{s} \cup \{ x_\alpha\}_{\alpha\in A'}
\]
and observe that the compact set $C'$ belongs to the ideal $\I$ by the discrete-completeness of $\I$.
Since $[C';U_{s}]$ is an open neighborhood of $f\in \overline{A'}$, there is a function $\alpha\in A'\cap [C';U_{s}]$, which is not possible as $\alpha(x_\alpha)\notin U_{s}$. Thus $\FF_j \cap A$ is infinite for some $j\in\w$. Therefore $[\kern-1.5pt[\K;\mathcal D]\kern-1.5pt]$ is a $cp$-network in $C_\I(X,Y)$ and hence $C_\I(X,Y)$ is a $\Pp$-space.
\smallskip

The paracompactness of the space $C_\I(X,Y)$ was implicitly proved in Lemmas 5 and 6 of \cite{OMe2}.
\smallskip

If the space $Y$ is separable, then the family $\DD$ is countable and so is the $cp$-network $[\kern-1.5pt[\K;\DD]\kern-1.5pt]$. This means that the function space $C_\I(X,Y)$ is a $\Pp_0$-space.
\end{proof}


\section{Detecting function spaces which are $P^{\w_1}_\w$-spaces}\label{s:p}


In this section we discuss a topological property that allows us to construct an example of a $\mathfrak P_0$-space, which does not belong to the class $\cc[\M]$ (see Example \ref{exa-Pyt0-C[M]}). This property is a modification of the $P$-space property, and its study was initiated by Greinecker and Ravsky in \\ {\tt http://math.stackexchange.com/questions/377038}.

\begin{mydefinition}
A point $x$ of a topological space $X$ is called a {\em $P^\kappa_\lambda$-point} for cardinals $\lambda\le\kappa$ if for any family $(U_\alpha)_{\alpha\in\kappa}$ of neighborhoods of $x$ there is a subset $\Lambda\subset\kappa$ of cardinality $|\Lambda|=\lambda$ such that $\bigcap_{\alpha\in\Lambda}U_\alpha$ is a neighborhood of $x$. The space $X$ is called a {\em $P^\kappa_\lambda$-space} if each $x\in X$ is a $P^\kappa_\lambda$-point.
\end{mydefinition}

The next proposition implies that a topological space $X$ is a $P$-space if and only if it is a $P^\w_\w$-space.

\begin{myprop}\label{p:P}
A point $x$ of a topological space $X$ is a $P$-point if and only if it is a $P^\w_\w$-point.
\end{myprop}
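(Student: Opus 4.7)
The plan is to prove the two implications separately, with the forward direction being immediate and the reverse direction relying on the standard trick of replacing an arbitrary countable family of neighborhoods by its sequence of partial intersections.

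First I would handle the easy direction: if $x$ is a $P$-point and $(U_n)_{n\in\w}$ is any countable family of neighborhoods of $x$, then $\bigcap_{n\in\w}U_n$ is itself a neighborhood of $x$ by definition; taking $\Lambda=\w$ (which has cardinality $\w$) witnesses the $P^\w_\w$-property. So every $P$-point is a $P^\w_\w$-point with no further work.

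For the converse, suppose $x$ is a $P^\w_\w$-point and let $(U_n)_{n\in\w}$ be an arbitrary countable family of neighborhoods of $x$. Define $V_n:=\bigcap_{k\le n}U_k$; each $V_n$ is a finite intersection of neighborhoods of $x$, hence again a neighborhood of $x$, and the sequence $(V_n)_{n\in\w}$ is decreasing. Applying the $P^\w_\w$-property to the family $(V_n)_{n\in\w}$ produces an infinite subset $\Lambda\subset\w$ such that $W:=\bigcap_{n\in\Lambda}V_n$ is a neighborhood of $x$. The key observation is that since $\Lambda$ is infinite and $(V_n)$ is decreasing, for every $m\in\w$ there exists $n\in\Lambda$ with $n\ge m$, whence $V_n\subset V_m\subset U_m$; therefore
\[
W=\bigcap_{n\in\Lambda}V_n=\bigcap_{n\in\w}V_n\subset\bigcap_{n\in\w}U_n.
\]
Conversely $\bigcap_{n\in\w}U_n\subset W$ trivially, so the two sets coincide and $\bigcap_{n\in\w}U_n$ is a neighborhood of $x$, confirming that $x$ is a $P$-point.

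There is no real obstacle here; the only subtle point is recognizing that one should not apply the $P^\w_\w$-hypothesis directly to $(U_n)_{n\in\w}$ (which would only yield an infinite subintersection, not the full one), but rather to the auxiliary decreasing sequence $(V_n)_{n\in\w}$, after which the monotonicity collapses any infinite subintersection to the full intersection.
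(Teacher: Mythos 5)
Your proof is correct and follows essentially the same argument as the paper: the trivial direction via $\Lambda=\w$, and the converse by passing to the decreasing partial intersections $V_n=\bigcap_{k\le n}U_k$, applying the $P^\w_\w$-property, and using monotonicity to identify the infinite subintersection with the full intersection.
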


\begin{proof}
The ``only if'' part of this statement if trivial. To prove the ``if'' part, assume that $x$ is a $P^\w_\w$-point. To prove that $x$ is a $P$-point, take any sequence $(U_n)_{n\in\w}$ of neighborhoods of $x$. For every $n\in\w$, put $V_n=\bigcap_{i\le n}U_i$. Since $x$ is a $P^\w_\w$-point, for the sequence of neighborhoods $(V_n)_{n\in\w}$ of $x$ there is an infinite subset $\Lambda\subset\w$ such that $\bigcap_{n\in\Lambda}V_n$ is a neighborhood of $x$. Since the sequence $(V_n)_{n\in\w}$ is decreasing, we conclude that $\bigcap_{n\in\w}U_n=\bigcap_{n\in\w}V_n=\bigcap_{n\in\Lambda}V_n$ is a neighborhood of $x$. So, $x$ is a $P$-point in $X$.
\end{proof}

For a point $x$ of a topological space $X$, denote by $\chi(x,X)$ the {\em character} of $X$ at $x$, i.e., the smallest cardinality of a neighborhood base at $x$.

\begin{myprop}\label{p:chiP}
A point $x$ of a topological space $X$ is a $P^\kappa_\lambda$-point for any cardinals $\lambda <\kappa$ with $\kappa>\chi(x,X)$.
\end{myprop}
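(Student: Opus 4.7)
The plan is a direct pigeonhole argument against a fixed neighborhood base at $x$. Setting $\chi:=\chi(x,X)$, I would fix a neighborhood base $\mathcal{B}=\{B_\beta:\beta<\chi\}$ at $x$. Given any family $(U_\alpha)_{\alpha\in\kappa}$ of neighborhoods of $x$, for each $\alpha<\kappa$ choose an index $\beta_\alpha<\chi$ with $B_{\beta_\alpha}\subset U_\alpha$; this defines a map $\varphi:\kappa\to\chi$, $\varphi(\alpha)=\beta_\alpha$. The entire game is then to locate a fiber of $\varphi$ of cardinality at least $\lambda$.

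This is the key step. If every fiber had cardinality $<\lambda$, then, in the case where both $\chi$ and $\lambda$ are infinite, one would obtain
\[
\kappa\le\sum_{\beta<\chi}|\varphi^{-1}(\beta)|\le\chi\cdot\lambda=\max(\chi,\lambda)<\kappa,
\]
a contradiction. So some $\beta^*<\chi$ has $|\varphi^{-1}(\beta^*)|\ge\lambda$. Picking any $\Lambda\subset\varphi^{-1}(\beta^*)$ with $|\Lambda|=\lambda$ gives $\bigcap_{\alpha\in\Lambda}U_\alpha\supset B_{\beta^*}$, which is a neighborhood of $x$, as required.

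The degenerate cases can be handled in a line before invoking the main argument. If $\lambda$ is finite, the conclusion is trivial because any $\lambda$ neighborhoods of $x$ have a neighborhood as their intersection. If $\chi$ is finite, then $B:=\bigcap\mathcal{B}$ is a smallest neighborhood of $x$ (so $x$ is in fact a $P$-point), and every intersection of neighborhoods of $x$ contains $B$ and is therefore a neighborhood. The main ``obstacle'' is really just the infinite cardinal arithmetic identity $\chi\cdot\lambda=\max(\chi,\lambda)<\kappa$, which is completely standard; the proposition is essentially a bookkeeping statement about characters and should not require any genuinely new idea beyond this pigeonhole.
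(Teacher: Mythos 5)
Your proof is correct and follows essentially the same route as the paper: fix a neighborhood base of cardinality $\chi(x,X)<\kappa$, map each $U_\alpha$ to a basic set inside it, and apply the pigeonhole principle to find a fiber of size at least $\lambda$, whose common basic set witnesses that $\bigcap_{\alpha\in\Lambda}U_\alpha$ is a neighborhood. You merely make explicit the cardinal arithmetic ($\chi\cdot\lambda=\max(\chi,\lambda)<\kappa$) and the finite degenerate cases that the paper leaves implicit, which is fine.
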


\begin{proof}
Fix a neighborhood base $\mathcal B_x$ at $x$ of cardinality $|\mathcal B_x|=\chi(x,X)$. To show that $x$ is a $P^\kappa_\lambda$-point for cardinals $\lambda <\kappa>\chi(x,X)$, take any family of neighborhoods $(U_\alpha)_{\alpha\in\kappa}$ of $x$. For each $\alpha\in\kappa$ find a basic set $B_\alpha\in\mathcal B_x$ which is contained in the neighborhood $U_\alpha$. Since $|\mathcal B_x|=\chi(x,X)<\kappa$, by the Pigeonhole Principle, there is $B\in\mathcal B_x$ such that the set $\Lambda=\{\alpha\in\kappa:B_\alpha=B\}$ has cardinality $|\Lambda|>\lambda$. Then $\bigcap_{\alpha\in\Lambda}U_\alpha\supset B$ is a neighborhood of $x$.
\end{proof}
By Proposition~\ref{p:chiP}, each first countable space is a $P^{\kappa}_{\w}$-space for any uncountable cardinal $\kappa$.

To detect function spaces $C_\I(X,Y)$ which are $P^\kappa_\lambda$-spaces, let us introduce the corresponding property for ideals of compact sets.

\begin{mydefinition}
An ideal $\I$ of compact subsets of a topological space $X$ is called a {\em $P^\kappa_\lambda$-ideal} for cardinals $\lambda\leq\kappa$ if for any family of compact subsets \mbox{$\{K_\alpha\}_{\alpha\in\kappa}\subset\I$} there exists a subset $\Lambda\subset\kappa$ of cardinality $|\Lambda|=\lambda$ such that the union $\bigcup_{\alpha\in\Lambda}K_\alpha$ is contained in some compact set $K\in\I$.
\end{mydefinition}

\begin{mytheorem}\label{t:P} Let $\lambda\le\kappa$ be two cardinals with $\cf(\kappa)>\w$. For an ideal $\I$ of compact subsets of a Tychonoff space $X$ the following conditions are equivalent:
\begin{enumerate}
\item[{\rm (1)}] $\I$ is a $P^\kappa_\lambda$-ideal;
\item[{\rm (2)}] for every metrizable space $Y$ the function space $C_\I(X,Y)$ is a $P^\kappa_\lambda$-space;
\item[{\rm (3)}] the function space $C_\I(X)$ is a $P^\kappa_\lambda$-space.
\end{enumerate}
\end{mytheorem}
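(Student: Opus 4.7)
The plan is to argue $(2)\Rightarrow(3)\Rightarrow(1)\Rightarrow(2)$, with $(2)\Rightarrow(3)$ being immediate since $\IR\in\M$.

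For $(3)\Rightarrow(1)$, given a family $(K_\alpha)_{\alpha\in\kappa}$ in $\I$, I would work at the zero function $\mathbf{0}\in C_\I(X)$ and consider its subbasic neighborhoods $W_\alpha:=[K_\alpha;(-1,1)]$. Applying the $P^\kappa_\lambda$-point property at $\mathbf 0$ yields a set $\Lambda\subset\kappa$ of cardinality $\lambda$ such that $\bigcap_{\alpha\in\Lambda}W_\alpha$ is a neighborhood of $\mathbf 0$, hence contains some basic neighborhood $\bigcap_{i=1}^n [C_i;(-\delta_i,\delta_i)]$ with $C_i\in\I$ and $\delta_i>0$. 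Setting $K:=\bigcup_{i=1}^n C_i\in\I$, I claim $\bigcup_{\alpha\in\Lambda}K_\alpha\subset K$. Otherwise, picking $x_0\in K_{\alpha_0}\setminus K$ for some $\alpha_0\in\Lambda$, the Tychonoff property produces a continuous $g:X\to[0,2]$ with $g|_K\equiv 0$ and $g(x_0)=2$. Such $g$ belongs to every $[C_i;(-\delta_i,\delta_i)]$ but fails to belong to $W_{\alpha_0}$, contradicting the inclusion and hence witnessing that $\I$ is a $P^\kappa_\lambda$-ideal.

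For $(1)\Rightarrow(2)$, fix a compatible metric $d$ on $Y$, a function $f\in C_\I(X,Y)$, and a family $(W_\alpha)_{\alpha\in\kappa}$ of neighborhoods of $f$. The first step is the observation that the sets $V(K,\varepsilon):=\{g\in C(X,Y):d(g(x),f(x))<\varepsilon\text{ for all }x\in K\}$, with $K\in\I$ and $\varepsilon>0$, form a neighborhood base of $f$ in $C_\I(X,Y)$, so each $W_\alpha$ contains some $V(K_\alpha,\varepsilon_\alpha)$. Since $\cf(\kappa)>\omega$ and $\kappa=\bigcup_{n\in\NN}\{\alpha:\varepsilon_\alpha\geq 1/n\}$, some $\{\alpha:\varepsilon_\alpha\geq 1/n\}$ still has cardinality $\kappa$, and passing to this subfamily reduces us to a single uniform $\varepsilon>0$ with $\varepsilon_\alpha\geq\varepsilon$ for every $\alpha$. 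Applying the $P^\kappa_\lambda$-ideal hypothesis to the reindexed family $(K_\alpha)_{\alpha\in\kappa}$ now gives $\Lambda\subset\kappa$ of cardinality $\lambda$ and $K\in\I$ with $\bigcup_{\alpha\in\Lambda}K_\alpha\subset K$; then $V(K,\varepsilon)\subset V(K_\alpha,\varepsilon)\subset V(K_\alpha,\varepsilon_\alpha)\subset W_\alpha$ for every $\alpha\in\Lambda$, providing the required open neighborhood of $f$.

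The main technical point is the asserted neighborhood base property of the sets $V(K,\varepsilon)$. That each $V(K,\varepsilon)$ is open in $C_\I(X,Y)$ follows by the standard compact-open-topology argument: for $g\in V(K,\varepsilon)$ one covers the compact set $g(K)$ by finitely many open balls $B(y_i,\delta)$ of small radius, pulls back to the compact sets $C_i:=g^{-1}(\overline{B(y_i,\delta)})\cap K$, and observes that each $C_i$ lies in $\I$ since $\I$ is closed under taking closed subsets of its members; then $\bigcap_i [C_i;B(y_i,2\delta)]$ is a basic neighborhood of $g$ contained in $V(K,\varepsilon)$ whenever $\delta$ is chosen small enough relative to $\varepsilon-\sup_{x\in K}d(g(x),f(x))$. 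The role of the hypothesis $\cf(\kappa)>\omega$ is precisely to permit extracting a uniform $\varepsilon$ on a subfamily still of cardinality $\kappa$, which is exactly what the $P^\kappa_\lambda$-ideal hypothesis requires as input.
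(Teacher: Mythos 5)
Your proof is correct, and its skeleton coincides with the paper's: the cofinality hypothesis is used exactly as in the paper (pigeonhole on the radii $\varepsilon_\alpha$ to extract a subfamily of size $\kappa$ with a uniform $\varepsilon$), and your $(3)\Rightarrow(1)$ is the paper's argument almost verbatim, with the Tychonoff separating function killing a hypothetical point of $\bigcup_{\alpha\in\Lambda}K_\alpha\setminus K$ (the paper uses a single uniform basic neighborhood $[K;\e]$ where you use a finite intersection of subbasic sets and then take $K=\bigcup_i C_i$ — an immaterial difference, since $\I$ is closed under finite unions). The one genuine divergence is in $(1)\Rightarrow(2)$: the paper embeds the metrizable $Y$ into a Banach space and uses the resulting linear (hence homogeneous) structure of $C_\I(X,Y)$ to check the $P^\kappa_\lambda$-property only at the zero function, whereas you fix a compatible metric on $Y$ and work at an arbitrary $f$, verifying directly that the uniform sets $V(K,\varepsilon)$ with $K\in\I$ form a neighborhood base at $f$. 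Your route is more self-contained: it avoids the implicit facts that the $P^\kappa_\lambda$-property is hereditary and that $C_\I(X,Y)$ sits as a subspace of $C_\I(X,E)$ for a Banach superspace $E\supset Y$, at the price of the (routine, and correctly executed) verification that each $V(K,\varepsilon)$ is $\I$-open, which uses precisely the ideal's closure under intersections with compact sets. The half of the base claim you only assert — that every basic $\I$-open neighborhood of $f$ contains some $V(K,\varepsilon)$ — is the standard Lebesgue-number argument applied to the compact sets $f(K_i)\subset U_i$ together with closure of $\I$ under finite unions; it would be worth a sentence, but it is not a gap.
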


\begin{proof} (1)$\Rightarrow$(2): Assume that $\I$ is a $P^\kappa_\lambda$-ideal. We need to show that for any metrizable space $Y$ the function space $C_\I(X,Y)$ is a $P^\kappa_\lambda$-space. Since each metrizable space embeds into a Banach space, we can assume that $Y$ is a Banach space endowed with a norm $\|\cdot\|$. Then $C_\I(X,Y)$ is a linear topological space. So it suffices to check that the constant zero function \mbox{$\mathbf{0}:X\to \{0\}\subset Y$} is a $P^\kappa_\lambda$-point of the function space $C_\I(X,Y)$. Fix a family $(U_\alpha)_{\alpha\in\kappa}$ of neighborhoods of $\mathbf{0}$ in $C_\I(X,Y)$. By the definition of the topology of the function space $C_\I(X,Y)$, for every $\alpha\in\kappa$ there is a compact set $K_\alpha\in\I$ and a positive real number $\e_\alpha$ such that the neighborhood $U_\alpha$ contains the basic neighborhood
\[
[K_\alpha;\e_\alpha]:=\{f\in C_\I(X,Y):\sup_{x\in K_\alpha}\|f(x)\|<\e_\alpha\}.
\]
Since the cardinal $\kappa$ has uncountable cofinality, there is a positive real number $\e$ such that the set \mbox{$\Omega=\{\alpha\in\kappa:\e_\alpha\ge\e\}$} has cardinality $|\Omega|=\kappa$. Since $\I$ is a $P^\kappa_\lambda$-ideal, for the family of compact sets $\{K_\alpha\}_{\alpha\in\Omega}\subset\I$
there is a subset $\Lambda\subset\Omega$ of cardinality $|\Lambda|=\lambda$ such that the union $\bigcup_{\alpha\in\Lambda}K_\alpha$ is contained in some compact set $K\in\I$. Then the intersection $\bigcap_{\alpha\in\Lambda}U_\alpha$ contains the neighborhood $[K;\e]$ of $\mathbf{0}$, which means that $\mathbf{0}$ is a $P^\kappa_\lambda$-point of $C_\I(X,Y)$.
\smallskip

The implication (2)$\Rightarrow$(3) is trivial. To prove that $(3)\Rightarrow(1)$, assume that the function space $C_\I(X)$ is a $P^\kappa_\lambda$-space. We have to prove that $\I$ is a $P^\kappa_\lambda$-ideal. Fix any family of compact sets $\{K_\alpha\}_{\alpha\in\kappa}\subset\I$ and for every $\alpha\in\kappa$ consider the neighborhood
\[
U_\alpha=\{f\in C_\I(X):\sup_{x\in K_\alpha}|f(x)|<1\}
\]
of the constant zero function $\mathbf{0}\in C_\I(X)$. Since $C_\I(X)$ is a $P^\kappa_\lambda$-space, there exists a subset $\Lambda\subset\kappa$ of cardinality $|\Lambda|=\lambda$ such that the intersection $\bigcap_{\alpha\in\Lambda}U_\alpha$ is a neighborhood of $\mathbf{0}$. Then this intersection contains a basic neighborhood $[K;\e]$ for some compact set $K\in\I$ and some $\e>0$. We claim that \mbox{$\bigcup_{\alpha\in\Lambda}K_\alpha\subset K$}. Indeed, assuming the converse we can find $\alpha\in\Lambda$ and a point $x\in K_\alpha\setminus K$. Since the space $X$ is Tychonoff, there exists a continuous function $f:X\to\IR$ such that $f(K)\subset\{0\}$ and $f(x)=1$. Then $f\in [K;\e]\not\subset U_\alpha$ that contradicts the choice of $[K;\e]$.
\end{proof}

Since the ideal of finite subsets of an uncountable topological space fails to be a $P^{\w_1}_\w$-ideal, Theorem~\ref{t:P} implies:

\begin{mycorollary}
For any uncountable space Tychonoff space $X$ the function space $C_p(X)$ fails to be a $P^{\w_1}_\w$-space.
\end{mycorollary}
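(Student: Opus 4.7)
The plan is to deduce this directly from Theorem \ref{t:P}. Note that $C_p(X) = C_\I(X)$ where $\I$ is the ideal of all finite subsets of $X$, and that $\cf(\w_1) = \w_1 > \w$, so Theorem \ref{t:P} applies with $\kappa = \w_1$ and $\lambda = \w$. It therefore suffices to exhibit a family $\{K_\alpha\}_{\alpha \in \w_1} \subset \I$ witnessing that $\I$ is not a $P^{\w_1}_\w$-ideal.

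Since $X$ is uncountable, I would choose an injective family $\{x_\alpha\}_{\alpha \in \w_1}$ of distinct points of $X$ and set $K_\alpha := \{x_\alpha\} \in \I$ for every $\alpha \in \w_1$. For any subset $\Lambda \subset \w_1$ of cardinality $|\Lambda| = \w$, the union $\bigcup_{\alpha \in \Lambda} K_\alpha = \{x_\alpha : \alpha \in \Lambda\}$ is a countably infinite set, which is not finite and hence is not contained in any compact member of $\I$. Thus no countably infinite subfamily of $\{K_\alpha\}_{\alpha\in\w_1}$ has its union swallowed by an element of $\I$, so $\I$ fails to be a $P^{\w_1}_\w$-ideal.

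Applying Theorem \ref{t:P} (in the form $(3)\Rightarrow(1)$ contrapositively, or equivalently the failure of $(1)\Rightarrow(3)$), we conclude that $C_p(X) = C_\I(X)$ is not a $P^{\w_1}_\w$-space. There is no real obstacle here: once Theorem \ref{t:P} is available, the argument reduces to the trivial observation that an uncountable space admits uncountably many pairwise distinct singletons, and the hypothesis ``Tychonoff'' is needed only implicitly through the statement of Theorem \ref{t:P}.
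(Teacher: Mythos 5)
Your proposal is correct and matches the paper's argument: the paper likewise derives the corollary from Theorem~\ref{t:P} via the observation that the ideal of finite subsets of an uncountable space fails to be a $P^{\w_1}_\w$-ideal, which is exactly your singleton-family witness spelled out. No gaps; the use of the Tychonoff hypothesis only through Theorem~\ref{t:P} is also as in the paper.
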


An ideal $\I$ of compact subsets of a topological space $X$ is called a {\em \mbox{$\sigma$-ideal}} if each compact subset $K\subset X$, which can be written as the countable union \mbox{$K=\bigcup_{n\in\w}K_n$} of compact sets $K_n\in\I$, $n\in\w$, belong to the ideal $\I$. For example, for any infinite cardinal $\kappa$ the ideal of all compact subsets of cardinality $\leq \kappa$ in a topological space $X$ is a $\sigma$-ideal.

Now we see that Theorem \ref{c:smP} is a partial case of the following theorem.
\begin{mytheorem}\label{t:sigmaP}
Any $\sigma$-ideal $\I$ of compact subsets of a  separable metrizable space $X$ is a $P^{\w_1}_\w$-ideal. Consequently, for every metrizable space $Y$ the function space $C_\I(X,Y)$ is a $P^{\w_1}_\w$-space.
\end{mytheorem}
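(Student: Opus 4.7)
The ``consequently'' clause is immediate from Theorem~\ref{t:P}, since $\cf(\w_1)=\w_1>\w$, so my plan is to establish directly that $\I$ is a $P^{\w_1}_\w$-ideal. Given any family $\{K_\alpha\}_{\alpha<\w_1}\subset\I$, I will extract a countably infinite subfamily whose \emph{union itself} is a compact subset of $X$; the $\sigma$-ideal axiom will then immediately place this union in $\I$.

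The construction uses the hyperspace $\K(X)$ of non-empty compact subsets of $X$ equipped with the Hausdorff metric $d_H$. Since $X$ is separable metrizable, so is $\K(X)$, and in particular $\K(X)$ is second countable. A routine Cantor--Bendixson argument shows that any uncountable subset of a second countable space contains one of its own condensation points; after harmlessly discarding empty members of $\{K_\alpha\}$, I pick $\alpha_0<\w_1$ so that $K_{\alpha_0}$ is a condensation point of $\{K_\alpha\}_{\alpha<\w_1}$ in $\K(X)$. Since every $d_H$-ball of radius $1/n$ around $K_{\alpha_0}$ meets the family in uncountably many sets, I inductively choose distinct ordinals $\alpha_1<\alpha_2<\cdots$ with $d_H(K_{\alpha_n},K_{\alpha_0})<1/n$, and set $K=\bigcup_{n\ge 0}K_{\alpha_n}$.

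The heart of the argument will be to verify that $K$ is compact. For a sequence $(x_k)$ in $K$ with $x_k\in K_{\alpha_{n_k}}$: if the indices $n_k$ remain bounded, a subsequence of $(x_k)$ lies in a single compact $K_{\alpha_m}\subset K$ and converges; if $n_k\to\infty$, then $d_H(K_{\alpha_{n_k}},K_{\alpha_0})\to 0$ lets me choose $y_k\in K_{\alpha_0}$ with $d(x_k,y_k)\to 0$, and compactness of $K_{\alpha_0}$ yields a subsequence $y_k\to y\in K_{\alpha_0}\subset K$, whence $x_k\to y\in K$. The $\sigma$-ideal hypothesis applied to the countable decomposition $K=\bigcup_n K_{\alpha_n}$ then delivers $K\in\I$. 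The main obstacle that this condensation-point construction is designed to circumvent is that an arbitrary Hausdorff-convergent subsequence $K_{\alpha_n}\to K_\infty$ would typically produce a limit $K_\infty\notin\I$, so the unavoidable compact superset $K_\infty\cup\bigcup_n K_{\alpha_n}$ would not be accessible from the ideal axioms. Forcing $K_\infty$ to coincide with $K_{\alpha_0}$, which is already a member of the chosen subfamily, absorbs the limit harmlessly into the union and keeps everything inside $\I$.
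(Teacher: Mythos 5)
Your proof is correct and follows essentially the same route as the paper: the paper also passes to the hyperspace $\exp(X)$ of non-empty compact sets (separable metrizable, hence with no uncountable discrete subspace), extracts a sequence $(K_{\alpha_n})_{n\in\w}$ of members of the family converging to some $K_\infty$ in the family, observes that $K_\infty\cup\bigcup_{n\in\w}K_{\alpha_n}$ is compact and hence lies in $\I$ by the $\sigma$-ideal property, and then applies Theorem~\ref{t:P}. The only differences are cosmetic: the paper treats the case of countably many distinct members separately (where, read literally, your condensation-point step has nothing to condense on, though the indexed-family reading repairs it at once), and it does not need the limit to be a member of the chosen subfamily, since the definition of a $P^{\w_1}_\w$-ideal only requires the union $\bigcup_{n}K_{\alpha_n}$ to be \emph{contained in} some compact set of $\I$.
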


\begin{proof} To show that $\I$ is a $P^{\w_1}_\w$-ideal, fix any indexed family $\K=\{K_\alpha\}_{\alpha<\w_1}$ of non-empty compact sets in $\I$.  If the set $\K$ is countable, then for some compact set $K\in\K$ the set \mbox{$\Lambda=\{\alpha\in\kappa:K_\alpha=K\}$} is infinite and the union \mbox{$\bigcup_{\alpha\in\lambda}K_\alpha =K$ } belongs to $\I$. So, we assume that $\K$ is uncountable. The set $\K$ can be considered as an uncountable subset of the hyperspace $\exp(X)$ of all non-empty compact sets endowed with the Vietoris topology. It is well-known that the hyperspace $\exp(X)$ of the separable metrizable space $X$ is separable and metrizable (by the Hausdorff metric). Since separable metrizable spaces do not contain uncountable discrete subspaces, the uncountable set $\K\subset\exp(X)$ is not discrete and hence contains a non-trivial convergent sequence with the limit point. So we can find a sequence of pairwise distinct countable ordinals $(\alpha_n)_{n\in\w}$ such that the sequence $(K_{\alpha_n})_{n\in\w}$ converges to some compact set $K_\infty\in\K$ in the hyperspace $\exp(X)$. It follows that the union
\[
K=K_\infty\cup\bigcup_{n\in\w}K_{\alpha_n}
\]
is a compact set in $X$. Since $\I$ is a $\sigma$-ideal, the compact set $K$ belongs to the ideal $\I$ and contains the union $\bigcup_{n\in\w}K_{\alpha_n}$ witnessing that $\I$ is a $P^{\w_1}_\w$-ideal.

By Theorem~\ref{t:P}, for every metrizable space $Y$ the function space $C_\I(X,Y)$ is a $P^{\w_1}_\w$-space.
\end{proof}


\section{Ascoli spaces}\label{s:ascoli}


The results of this section are motivated by the problem of detecting spaces that necessarily belong to the $\CC$-stable closure $\cc[\X]$ of any non-empty  class of topological spaces $\X$. To detect spaces that embed into some nice function spaces we shall exploit the construction of the canonical map.

For any topological spaces $X$ and $Y$ the canonical map
\[
\delta:X\to \CC(\CC(X,Y),Y)
\]
assigns to each point $x\in X$ the {\em $Y$-valued Dirac measure} $\delta_x:\CC(X,Y)\to Y$ concentrated at $x$. The $Y$-valued Dirac measure $\delta_x$ assigns to each function \mbox{$f\in \CC(X,Y)$} its value $f(x)$ at $x$; so $\delta_x(f)=f(x)$. The function $\delta_x:\CC(X,Y)\to Y$ is continuous at each function $f\in \CC(X,Y)$ since for any open neighborhood $V\subset Y$ of $\delta_x(f)=f(x)$, the set $[\{x\};V]$ is an open neighborhood of $f$ in $\CC(X,Y)$ with $\delta_x\big([\{x\};V]\big)\subset V$. This shows that the canonical map $\delta:X\to \CC(\CC(X,Y),Y)$ is well-defined.

We are interested in detecting topological spaces $X$ and $Y$ for which the canonical map $\delta:X\to \CC(\CC(X,Y),Y)$ is a topological embedding. We start with finding conditions implying the continuity of the canonical map $\delta$.

Recall that a topological space $X$ is called
\begin{itemize}
\item[$\bullet$] {\em sequential} if for each non-closed subset $A\subset X$ there is a sequence $\{a_n\}_{n\in\w}\subset A$ converging to some point $a\in \bar A\setminus A$;
\item[$\bullet$] a {\em $k$-space} if for each non-closed subset $A\subset X$ there is a compact subset $K\subset X$ such that $A\cap K$ is not closed in $K$;
\item[$\bullet$] a {\em $k_\IR$-space} if  a real-valued function $f$ on $X$ is continuous if and only if its restriction $f|_K$ to any compact subset $K$ of $X$ is continuous.
\end{itemize}

In the next definition we generalize the notions of a $k_\mathbb{R}$-space and of the Ascoli space defined in Introduction.

\begin{mydefinition}
Let $Y$ be a topological space. A topological space $X$ is called
\begin{itemize}
\item[$\bullet$] a {\em $k_Y$-space} if for any discontinuous function $f:X\to Y$ there is a compact subset $K\subset X$ such that the restriction $f|_K$ is discontinuous;
\item[$\bullet$] a {\em $Y$-Ascoli space} if each compact subset $\K\subset \CC(X,Y)$ is evenly continuous.
\end{itemize}
\end{mydefinition}
So, $X$ is an  Ascoli space if and only if $X$ is $\IR$-Ascoli.   By Ascoli's theorem \mbox{\cite[3.4.20]{Eng}}, each $k$-space is $Y$-Ascoli for any regular space $Y$. On the other hand, Noble \cite{Noble} proved that each $k_\IR$-space is $Y$-Ascoli for any Tychonoff space $Y$. By Corollary \ref{l:AscoliCat} proved below, each Ascoli space is $2$-Ascoli, where  $2=\{0,1\}$ is the doubleton endowed with the discrete topology.
Therefore we have the following implications:
\[
\mbox{sequential $\Rightarrow$ $k$-space $\Rightarrow$ $k_\IR$-space $\Rightarrow$ Ascoli  $\Rightarrow$ 2-Ascoli.}
\]

Now we prove some elementary properties of $Y$-Ascoli spaces. Let us recall that a subspace $Z$ of a topological space $X$ is a {\em retract of $X$} if there is a continuous map $r:X\to Z$ such that $r(z)=z$ for all $z\in Z$.

\begin{myprop} \label{p:AscoliCat}
Let $Y$ be a topological space.
\begin{enumerate}
\item[{\rm (1)}] If $X$ is a $Y$-Ascoli space, then each retract $Z$ of $X$ is $Y$-Ascoli.
\item[{\rm (2)}] For any family $\{  X_i\}_{i\in I}$ of $Y$-Ascoli spaces the topological sum $X:=\bigoplus_{i\in I} X_i$ is $Y$-Ascoli.
\item[{\rm (3)}] Each $Y$-Ascoli space $X$ is $Y^\kappa$-Ascoli for every cardinal $\kappa$.
\item[{\rm (4)}] Each $Y$-Ascoli space $X$ is $Z$-Ascoli for every subspace $Z\subset Y$.
\end{enumerate}
\end{myprop}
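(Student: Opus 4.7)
The unifying plan across all four parts is to exhibit, in each case, a natural continuous map $\Phi$ between function spaces (induced by a continuous map on the source or the target), which carries the given compact set into a set already known to be evenly continuous, and then to pull even continuity back along $\Phi$.

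For (1), given a retraction $r\colon X\to Z$, I would use the induced map $r^*\colon\CC(Z,Y)\to\CC(X,Y)$, $f\mapsto f\circ r$, which is continuous by standard properties of the compact-open topology. A compact $\K\subset\CC(Z,Y)$ then has compact, hence evenly continuous, image $r^*(\K)\subset\CC(X,Y)$. Because $r$ fixes the points of $Z$ and $Z$ carries the subspace topology from $X$, even continuity of $r^*(\K)$ at $(f\circ r,z)$ with $z\in Z$ transfers directly to even continuity of $\K$ at $(f,z)$: pull back the neighborhood of $f\circ r$ via $r^*$ and restrict the neighborhood of $z$ in $X$ to $Z$.

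For (2), I would restrict members of a compact $\K\subset\CC(\bigoplus_i X_i,Y)$ to each summand via the (continuous) restriction maps $\CC(\bigoplus_i X_i,Y)\to\CC(X_i,Y)$, obtaining compact, evenly continuous sets $\K_i\subset\CC(X_i,Y)$; since each $X_i$ is open in the topological sum, neighborhoods of $x\in X_i$ in $X_i$ remain neighborhoods in $X$, and the transfer is immediate. For (3), the projections $p_\alpha\colon Y^\kappa\to Y$ yield continuous maps $(p_\alpha)_*\colon\CC(X,Y^\kappa)\to\CC(X,Y)$, producing compact, evenly continuous sets $(p_\alpha)_*(\K)\subset\CC(X,Y)$; to verify even continuity of $\K$ at $(f,x)$ relative to a basic box $\prod_\alpha V_\alpha\ni f(x)$ with $V_\alpha=Y$ off a finite set $F$, I would apply even continuity coordinatewise on $F$ and intersect the resulting finitely many neighborhoods. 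For (4), the inclusion $j\colon Z\hookrightarrow Y$ induces a topological embedding $j_*\colon\CC(X,Z)\hookrightarrow\CC(X,Y)$; a compact $\K\subset\CC(X,Z)$ gives a compact, evenly continuous $j_*(\K)\subset\CC(X,Y)$, and since open sets in $Z$ are traces of open sets in $Y$, even continuity restricts back to $\K$.

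None of the four arguments poses a genuine obstacle. The only ingredient to be careful with is the continuity (and in (4), the embedding property) of the induced maps $r^*$, restriction, $(p_\alpha)_*$, and $j_*$ on compact-open function spaces, which are standard facts about $\CC(\cdot,\cdot)$.
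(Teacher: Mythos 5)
Your proposal is correct and follows essentially the same route as the paper: the induced maps $r^*$, the restriction operators, the coordinate projections $(p_\alpha)_*$, and the inclusion-induced embedding are exactly the maps used there, with even continuity pulled back along them. The only cosmetic difference is in (3), where the paper phrases the argument as continuity of the evaluation map $\KK\times X\to Y^\kappa$ checked coordinatewise, while you verify even continuity directly on basic boxes; these are the same fact.
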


\begin{proof}
(1) Let $Z$ be a retract of a $Y$-Ascoli space $X$ and $r:X\to Z$ be a retraction of $X$ onto $Z$. This retraction induces a continuous extension operation $r^*:\CC(Z,Y)\to\CC(X,Y)$, $r^*:f\mapsto f\circ r$.
To show that the space $Z$ is $Y$-Ascoli, fix a compact subspace $\KK\subset\CC(Z,Y)$, a function $f\in\KK$, a point $z\in Z$ and a neighborhood $O_{f(z)}\subset Y$ of $f(z)$. Consider the function $f\circ r:X\to Y$ and the compact subset $r^*(\KK)\subset \CC(X,Y)$. Since the space $X$ is $Y$-Ascoli, there are a neighborhood $O_z\subset X$ of $z$ and a neighborhood $U_{f\circ r}\subset r^*(\KK)\subset\CC(X,Y)$ of $f\circ r$ such that $U_{f\circ r}(O_z)\subset O_{f(z)}$. By the continuity of the extension operation $r^*$, there is a neighborhood $U_f\subset \CC(Z,Y)$ such that $r^*(U_f)\subset U_{f\circ r}$. Then for the neighborhood $Z\cap O_z$ of $z$ in $Z$ we get $U_f(Z\cap O_z)=U_{f\circ r}(Z\cap O_z)\subset U_{f\circ r}(O_z)\subset O_{f(z)}$, which means that the compact set $\KK\subset\CC(Z,Y)$ is evenly continuous. Thus the space $Z$ is $Y$-Ascoli.
\smallskip

(2) Let $\{X_i\}_{i\in I}$ be a family of $Y$-Ascoli spaces. To show that the topological sum $X=\bigoplus_{i\in I}X_i$ is $Y$-Ascoli, fix a compact set $\KK\subset \CC(X,Y)$, a function $f\in\KK$, a point $x\in X$ and a neighborhood $O_{f(x)}$ of $f(x)$ in $Y$. Find an index $i\in I$ such that $x\in X_i$. The continuity of the restriction operator $R_i:\CC(X,Y)\to \CC(X_i,Y)$, $R_i:f\mapsto f|_{X_i}$, implies that the subset $\KK_i=R_i(\KK)\subset \CC(X_i,Y)$ is compact. Since the space $X_i$ is $Y$-Ascoli, there exist a  neighborhood $U_{f|X_i}\subset \CC(X_i,Y)$ of $f|_{X_i}$ and a neighborhood $O_x\subset X_i$ of $x$ such that $U_{f|X_i}(O_x)\subset O_{f(x)}$. By the continuity of the restriction operator $R_i:\CC(X,Y)\to \CC(X_i,Y)$, the set $U_f=\{g\in \CC(X,Y):g|_{X_i}\in U_{f|X_i}\}$ is a neighborhood of $f$. Then $U_f(O_x)=U_{f|X_i}(O_x)\subset O_{f(x)}$, which means that $\KK$ is evenly continuous and $X$ is $Y$-Ascoli.
\smallskip

(3) For $j\in\kappa$, put $p_j:\CC(X,Y^\kappa)\to\CC(X,Y)$, $p_j:f\mapsto \pi_j\circ f$, where $\pi_j :Y^\kappa \to Y$ stands for the projection onto the $j$th coordinate. Then $p_j$ is continuous. Let $\psi_\kappa :\CC(X,Y^\kappa)\times X \to Y^\kappa$ and $\psi:\CC(X,Y)\times X \to Y$ be the evaluation maps. Then $\pi_j \circ\psi_\kappa |_{\KK\times X} =\psi \circ (p_j, \mathrm{id}_X)  |_{\KK\times X}$ is continuous for every $j\in\kappa$ and every compact subset $\KK\subset \CC(X,Y^\kappa)$, and therefore $\psi_\kappa$ is continuous on $\KK\times X$. Thus $X$ is $Y^\kappa$-Ascoli.

\smallskip

(4) The last statement follows from the fact that for every subspace $Z\subset Y$ the function space $\CC(X,Z)$ is a subspace of $\CC(X,Y)$.
\end{proof}

Since any (zero-dimensional) Tychonoff space embeds into some power $\mathbb{R}^\kappa$ (respectively, $2^\kappa$), Proposition \ref{p:AscoliCat} implies
\begin{mycorollary}\label{l:AscoliCat}
\begin{enumerate}
\item[{\rm (1)}] If $X$ is an Ascoli space, then $X$ is $Y$-Ascoli for every Tychonoff space $Y$.
\item[{\rm (2)}] If $X$ is a $2$-Ascoli space, then $X$ is $Y$-Ascoli for every zero-dimensional $T_1$-space $Y$.
\end{enumerate}
\end{mycorollary}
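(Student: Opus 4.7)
The plan is to read both parts off directly from items (3) and (4) of Proposition \ref{p:AscoliCat}, after embedding the target space $Y$ into a suitable power of the appropriate test space ($\IR$ in case (1), $2$ in case (2)).

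For (1), recall that an Ascoli space is by definition an $\IR$-Ascoli space. Part (3) of Proposition \ref{p:AscoliCat}, applied with the test space $\IR$, then yields that $X$ is $\IR^\kappa$-Ascoli for every cardinal $\kappa$. Next, any Tychonoff space $Y$ embeds topologically into $\IR^\kappa$ for some $\kappa$ via the evaluation map $y\mapsto (f(y))_{f\in C(Y,\IR)}$ (see \cite[2.3.23]{Eng}). Part (4) of Proposition \ref{p:AscoliCat} then gives that $X$ is $Y$-Ascoli, which is precisely what (1) asserts.

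For (2), I would run the identical argument with $\IR$ replaced by the doubleton $2=\{0,1\}$. From the hypothesis that $X$ is $2$-Ascoli, part (3) gives that $X$ is $2^\kappa$-Ascoli for every cardinal $\kappa$. Now observe that a zero-dimensional $T_1$-space $Y$ is automatically Tychonoff: for any point $y\in Y$ and any closed set $F\subset Y$ with $y\notin F$, there exists a clopen set $U$ with $y\in U\subset Y\setminus F$, and the characteristic function $\chi_U:Y\to 2$ is continuous and separates $y$ from $F$. The corresponding evaluation map $y\mapsto (f(y))_{f\in C(Y,2)}$ therefore embeds $Y$ into some power $2^\kappa$, and a final application of part (4) of Proposition \ref{p:AscoliCat} yields that $X$ is $Y$-Ascoli.

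Both parts are mechanical once the right combination of (3) (passing to powers of the test space) and (4) (passing to subspaces of the test space) has been identified, so I do not anticipate any real obstacle. The only point that deserves an explicit line of justification is the standard observation that zero-dimensionality combined with $T_1$ already forces Tychonoff separation, which is exactly what licences the embedding $Y\hookrightarrow 2^\kappa$ used in (2).
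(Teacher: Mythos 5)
Your argument is exactly the paper's: the corollary is deduced there in one line from Proposition \ref{p:AscoliCat}(3) and (4) together with the embeddings of a Tychonoff space into a power $\IR^\kappa$ and of a zero-dimensional $T_1$-space into a power $2^\kappa$. Your extra remark that zero-dimensionality plus $T_1$ yields the Tychonoff-type separation needed for the embedding into $2^\kappa$ is a harmless (and correct) elaboration of what the paper leaves implicit.
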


It turns out that the $Y$-Ascoli property of $X$ is responsible for the continuity of the canonical map $\delta:X\to\CC(\CC(X,Y),Y)$.

\begin{myprop} \label{p:AscoliChar}
For topological spaces $X$ and $Y$, the canonical map $\delta: X\to \CC(\CC(X,Y), Y)$ is continuous if and only if $X$ is $Y$-Ascoli.
\end{myprop}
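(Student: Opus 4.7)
The plan is to analyze the continuity of $\delta$ by unpacking the sub-basic neighborhoods $[K;V]$ in the target space $\CC(\CC(X,Y),Y)$ and relating the condition $\delta_x\in[K;V]$ directly to the even-continuity condition on the compact set $K\subset\CC(X,Y)$.

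For the implication $Y$-Ascoli $\Ra$ continuity of $\delta$, I would fix a point $x_0\in X$ and a sub-basic neighborhood $[K;V]$ of $\delta_{x_0}$, where $K$ is compact in $\CC(X,Y)$ and $V$ is open in $Y$; by definition $g(x_0)\in V$ for every $g\in K$. Even continuity of $K$ produces, for each $g\in K$, a relatively open neighborhood $U_g\subset K$ of $g$ and a neighborhood $O_g\subset X$ of $x_0$ such that $U_g(O_g)\subset V$. A finite subcover $U_{g_1},\dots,U_{g_n}$ of the compact set $K$ together with $O:=\bigcap_{i=1}^n O_{g_i}$ gives an open neighborhood of $x_0$ whose image under $\delta$ lies in $[K;V]$.

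For the converse, assume $\delta$ is continuous; fix compact $K\subset\CC(X,Y)$, $f\in K$, $x_0\in X$, and an open neighborhood $V$ of $f(x_0)$. Using regularity of $Y$ (a standing hypothesis in the paper), I would choose an open $V'\ni f(x_0)$ with $\overline{V'}\subset V$. Since the Dirac evaluation $\delta_{x_0}:\CC(X,Y)\to Y$ is continuous, the set $U:=K\cap\delta_{x_0}^{-1}(V')$ is a relatively open neighborhood of $f$ in $K$, and its closure $\overline{U}$ in $K$ is compact and satisfies $\overline{U}\subset\delta_{x_0}^{-1}(\overline{V'})\subset\delta_{x_0}^{-1}(V)$, i.e.\ $\delta_{x_0}\in[\overline{U};V]$. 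Continuity of $\delta$ at $x_0$ then provides an open $O_{x_0}\ni x_0$ with $g(x)\in V$ for all $g\in\overline{U}$ and $x\in O_{x_0}$; restricting to $g\in U$ verifies even continuity of $K$ at $(f,x_0)$.

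The main obstacle is the converse direction: continuity of $\delta$ directly controls only neighborhoods of the form $[K';V]$, which reference \emph{all} of a compact set $K'$, whereas even continuity requires control near a single $f\in K$. The key device is to shrink $V$ to $V'$ with $\overline{V'}\subset V$ and then take the closure of $K\cap\delta_{x_0}^{-1}(V')$ in $K$, thereby encoding the relevant ``local near $f$'' information into a genuine compact subset of $\CC(X,Y)$ to which continuity of $\delta$ can be applied.
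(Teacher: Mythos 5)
Your proposal is correct and follows essentially the same route as the paper: the Ascoli-to-continuity direction uses the identical finite-subcover argument on a sub-basic neighborhood $[\KK;V]$, and the converse uses the same key device of shrinking $V$ to $V'$ with $\overline{V'}\subset V$ (your $U=\KK\cap\delta_{x_0}^{-1}(V')$ is exactly the paper's $[\{x\};\widetilde O_{f(x)}]\cap\KK$) and applying continuity of $\delta$ to the compact closure of $U$ in $\KK$. No gaps to report.
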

\begin{proof}
Assume that $\delta$ is continuous. To show that $X$ is $Y$-Ascoli, we have to check that every compact subset $\KK$ of $\CC(X,Y)$ is evenly continuous. Fix $f\in \KK$, $x\in X$ and an open neighborhood $O_{f(x)}\subset Y$ of $f(x)$.
Using the regularity of $Y$, choose an open neighborhood $\widetilde{O}_{f(x)}$ of $f(x)$ such that $\mathrm{cl}_Y \big( \widetilde{O}_{f(x)}\big) \subset O_{f(x)}$. Let $U_f := \big[ \{ x\}; \widetilde{O}_{f(x)}\big] \cap\KK$ and $\mathcal{C}:= \mathrm{cl}_\KK (U_f \cap \KK)$. Then for every $g\in\mathcal{C}$ we have
\[
\delta_x(g)=g(x) \in \mathrm{cl}_Y \big( \widetilde{O}_{f(x)}\big) \subset O_{f(x)},
\]
and therefore $\delta_x \in [\mathcal{C}; O_{f(x)}]$. Since $\delta$ is continuous, there is a neighborhood $O_x$ of $x$ such that $\delta(O_x)\subset \big[\mathcal{C}; O_{f(x)}\big]$. So for every $y\in O_x$ and each $g\in U_f \subset \mathcal{C}$ we have $g(y)=\delta_y(g)\in O_{f(x)}$, which means that $\KK$ is evenly continuous.
\smallskip

Conversely, assume that $X$ is $Y$-Ascoli. We have to show that $\delta$ is continuous at each point $x_0\in X$. Fix a sub-basic neighborhood
\[
[\KK;V]\subset\CC(\CC(X,Y),Y)
\]
of $\delta_{x_0}$ with $\KK\subset \CC(X,Y)$ compact and $V\subset Y$ open. It follows from $\delta_{x_0}\in[\mathcal K;V]$ that $f(x_0)=\delta_{x_0}(f)\in V$ for every $f\in\KK$. Since $X$ is $Y$-Ascoli, for every $f\in\KK$  there exist neighborhoods $U_f\subset \KK$ of $f$ and $O_f\subset X$ of $x_0$ such that $U_f(O_f)\subset V$. By the compactness of $\KK$, there is a finite subset $\F\subset\KK$  such that $\KK =\bigcup_{f\in\F} U_f$. Consider the neighborhood $O_{x_0} := \bigcap_{f\in\F} O_{f}$. Then for every $x\in O_{x_0}$ and each $g\in\KK$ we have
\[
\delta_x(g)\in g(O_{x_0})\subset \bigcup_{f\in\F}U_{f}(O_{x_0}) \subset \bigcup_{f\in\F}U_f(O_f)\subset V.
\] This means that $\delta_x\in[\KK;V]$ and hence the canonical map $\delta$ is continuous at $x_0$.
\end{proof}

Next, we give some conditions on topological spaces $X$ and $Y$ guaranteeing that the canonical map $\delta:X\to \delta(X)\subset \CC(\CC(X,Y),Y)$ is injective or open. We shall say that a map $f:X\to Y$ between topological spaces is {\em open} if for any open set $U\subset X$ the image $f(U)$ is open in the subspace $f(X)$ of $Y$.

\begin{mydefinition}\label{d:sep}
Let $X$ and $Y$ be topological spaces. The topological space $X$ is called
\begin{itemize}
\item[$\bullet$] {\em $Y$-separated} if for any distinct points $x,y\in X$ there is a continuous map $f:X\to Y$ such that $f(x)\ne f(y)$;
\item[$\bullet$] {\em $Y$-regular} if for any point $x\in X$ and a neighborhood $O_x\subset X$ of $x$ there is a continuous map $f:X\to Y$ such that $x\in f^{-1}(V)\subset O_x$ for some open set $V\subset Y$;
\item[$\bullet$] {\em $Y$-Tychonoff} if $X$ is $Y$-separated and $Y$-regular.
\end{itemize}
\end{mydefinition}

Observe that each $Y$-Tychonoff space $X$ embeds into some power $Y^\kappa$ of the space $Y$.
Observe also that a topological space $X$ is Tychonoff if and only if $X$ is $\mathbb R$-Tychonoff, and $X$ is zero-dimensional if and only if it is 2-regular  for the doubleton $2=\{0,1\}$ endowed with the discrete topology.

The following proposition can be easily derived from Definition~\ref{d:sep} and Proposition~\ref{p:AscoliChar}.

\begin{myprop}\label{p:AscoliInj} For topological spaces $X,Y$ the canonical map $\delta:X\to \delta(X)\subset \CC(\CC(X,Y),Y)$ is
\begin{enumerate}
\item[{\rm (1)}] injective if and only if $X$ is $Y$-separated;
\item[{\rm (2)}] open if $X$ is $Y$-regular;
\item[{\rm (3)}] open and injective if $X$ is $Y$-Tychonoff;
\item[{\rm (4)}] continuous if and only if $X$ is $Y$-Ascoli;
\item[{\rm (5)}] a topological embedding if $X$ is $Y$-Ascoli and $Y$-Tychonoff.
\end{enumerate}
\end{myprop}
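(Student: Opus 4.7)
My plan is to address the five parts of Proposition~\ref{p:AscoliInj} in a natural order, noting that parts (3) and (5) are formal corollaries once (1), (2), and (4) are established, and that (4) is already Proposition~\ref{p:AscoliChar}. So the real content is (1) and (2), each of which is a short unpacking of definitions once one identifies the right sub-basic neighborhood in $\CC(\CC(X,Y),Y)$.

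For part (1), I would observe that $\delta_x = \delta_y$ holds iff $f(x)=\delta_x(f)=\delta_y(f)=f(y)$ for every $f\in\CC(X,Y)$. Thus $\delta$ is injective iff for every pair of distinct points $x,y\in X$ there exists some $f\in\CC(X,Y)$ with $f(x)\ne f(y)$, and this is exactly the definition of $Y$-separatedness. This part is routine.

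For part (2), I would show that whenever $X$ is $Y$-regular, the image $\delta(U)$ of every open set $U\subset X$ is open in the subspace $\delta(X)$. Fix $x\in U$. By $Y$-regularity, choose a continuous $f:X\to Y$ and an open $V\subset Y$ with $x\in f^{-1}(V)\subset U$. Consider the sub-basic open set $W:=[\{f\};V]\subset \CC(\CC(X,Y),Y)$. Since $\delta_x(f)=f(x)\in V$, we have $\delta_x\in W$. For any $y\in X$ with $\delta_y\in W\cap \delta(X)$, we compute $f(y)=\delta_y(f)\in V$, so $y\in f^{-1}(V)\subset U$ and hence $\delta_y\in\delta(U)$. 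Therefore $W\cap\delta(X)\subset\delta(U)$, witnessing that $\delta_x$ is an interior point of $\delta(U)$ in $\delta(X)$. The only subtlety here is to keep track of the fact that openness is being verified in the subspace $\delta(X)$ and not in the whole function space.

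Finally, (3) follows at once from (1) and (2) by the definition of $Y$-Tychonoff, (4) is Proposition~\ref{p:AscoliChar} which has already been proved, and (5) follows by combining (3) and (4): $\delta$ is then continuous, injective, and open onto its image, hence a topological embedding. I do not anticipate any real obstacle; the main care needed is bookkeeping the evaluation identity $\delta_x(f)=f(x)$ and the difference between openness into $\CC(\CC(X,Y),Y)$ versus openness onto the subspace $\delta(X)$.
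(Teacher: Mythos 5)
Your proposal is correct and is exactly the derivation the paper has in mind: the paper gives no explicit argument, stating only that the proposition "can be easily derived from Definition~\ref{d:sep} and Proposition~\ref{p:AscoliChar}", and your verification of (1) by unpacking $\delta_x=\delta_y$, of (2) via the sub-basic neighborhood $[\{f\};V]$ intersected with $\delta(X)$, and of (3), (5) as formal combinations with (4) $=$ Proposition~\ref{p:AscoliChar} supplies precisely those routine details. No gaps; the care you take about openness being relative to the subspace $\delta(X)$ is the only point that needed attention, and you handle it correctly.
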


\begin{myremark}
Each connected space $X$ is 2-Ascoli since $\CC(X,2)=\{ \mathbf{0},\mathbf{1}\}$. It follows that the canonical map $\delta:X\to \CC(\CC(X,2),2)$ is constant. Hence $\delta$ is continuous and open, but it is not injective if $|X|>1$.
\end{myremark}

Since the class of $k$-spaces is (properly) contained in the class of Ascoli spaces, the following corollary generalizes Theorem 2.3.6 of \cite{mcoy}.
\begin{mycorollary}
For every Ascoli Tychonoff space $X$ the canonical map $\delta: X\to\CC(\CC(X))$ is a topological embedding.
\end{mycorollary}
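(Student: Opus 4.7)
My plan is to observe that the corollary is essentially an instance of Proposition \ref{p:AscoliInj}(5) with $Y=\IR$. Indeed, the standing notation $\CC(X)=\CC(X,\IR)$ means that $\CC(\CC(X))=\CC(\CC(X,\IR),\IR)$, so the canonical map in the statement coincides with $\delta:X\to\CC(\CC(X,\IR),\IR)$ from the general construction used in Section~\ref{s:ascoli}.

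To invoke Proposition \ref{p:AscoliInj}(5), I need to check that $X$ is both $\IR$-Ascoli and $\IR$-Tychonoff. The first reduction is immediate from the definition of an Ascoli space given in the introduction: $X$ is Ascoli if and only if every compact subset of $\CC(X)=\CC(X,\IR)$ is evenly continuous, which is precisely the definition of an $\IR$-Ascoli space. The second reduction was already noted just after Definition \ref{d:sep}: a topological space is Tychonoff in the classical sense precisely when it is $\IR$-Tychonoff, since separation of points and of a point from the complement of a neighborhood by continuous real-valued functions is the standard definition of the Tychonoff separation axiom.

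With both hypotheses verified, Proposition \ref{p:AscoliInj}(5) applies and yields that $\delta:X\to\delta(X)\subset\CC(\CC(X,\IR),\IR)=\CC(\CC(X))$ is a topological embedding, which is exactly the content of the corollary. There is no real obstacle here: the work has already been done in building the canonical-map machinery and in the preceding proposition; the corollary is the specialization $Y=\IR$ of statement (5) combined with two unpacking-of-definitions remarks. The only thing worth underlining in the write-up is that it strictly generalizes \cite[Theorem 2.3.6]{mcoy}, because (as noted in the excerpt) every $k$-space, and a fortiori every sequential Tychonoff space, is Ascoli, while the class of Ascoli spaces is strictly larger.
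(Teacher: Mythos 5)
Your proposal is correct and follows exactly the route the paper intends: the corollary is the specialization $Y=\IR$ of Proposition \ref{p:AscoliInj}(5), using the observations (already made in the text) that Ascoli means $\IR$-Ascoli and that Tychonoff means $\IR$-Tychonoff. Nothing further is needed.
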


Our interest to the study of Ascoli spaces can be explained by the following theorem which gives a ``lower bound'' on the $\CC$-stable closure $\cc[\X]$ of any non-empty class $\X$ of topological spaces. The second part of this theorem is Theorem \ref{t-Ascoli}.

\begin{mytheorem}\label{t:Ascoli-CX}
For any non-empty class $\X$ of topological spaces the class $\cc[\X]$ contains all zero-dimensional 2-Ascoli $\aleph_0$-spaces. If $\II\in\X$, then the class $\cc[\X]$ contains all Ascoli $\aleph_0$-spaces.
\end{mytheorem}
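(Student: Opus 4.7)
The plan is to realize every Ascoli $\aleph_0$-space $X$ as a subspace of an iterated function space that we can place in $\cc[\X]$ by hand, using the canonical map $\delta\colon X\to \CC(\CC(X,Y),Y)$ from Proposition~\ref{p:AscoliInj}(5). For part (1) I take $Y=2=\{0,1\}$ with the discrete topology, and for part (2) I take $Y=\IR$. In both cases the argument is: verify that $X$ is $Y$-Tychonoff and $Y$-Ascoli (so that $\delta$ is an embedding), and then show $\CC(\CC(X,Y),Y)\in\cc[\X]$ by iterating the compact-covering trick from the proof of Proposition~\ref{p:Ascoli}(3).

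The first step is a mild strengthening of Proposition~\ref{p:Ascoli}(3): \emph{for every $\aleph_0$-space $Z$ and every $Y\in\cc[\X]$, one has $\CC(Z,Y)\in\cc[\X]$.} The proof is a verbatim rerun of Proposition~\ref{p:Ascoli}(3): write $Z$ as a compact-covering image of a zero-dimensional separable metrizable space $M$, note $M\in[\X]\subset\cc[\X]$ and $M$ is Lindel\"of, invoke $\CC$-stability of $\cc[\X]$ to place $\CC(M,Y)$ in $\cc[\X]$, and use the dual embedding $\xi^*\colon \CC(Z,Y)\hookrightarrow \CC(M,Y)$ to conclude. The only change is that $Y$ is now allowed in the ambient class $\cc[\X]$ rather than $\X$, which is unproblematic since $\cc[\X]$ is closed under $\CC$-products of Lindel\"of factors.

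For part (1), Proposition~\ref{p:Ascoli}(1) gives $2\in[\X]\subset\cc[\X]$, and a zero-dimensional $T_1$ space is visibly $2$-Tychonoff (for each closed-open separation use the characteristic function to $2$). Together with the $2$-Ascoli hypothesis, Proposition~\ref{p:AscoliInj}(5) supplies the embedding $\delta\colon X\hookrightarrow \CC(\CC(X,2),2)$. One application of the extended lemma gives $\CC(X,2)\in\cc[\X]$; by Michael's theorem $\CC(X,2)$ is an $\aleph_0$-space (so Lindel\"of), and a second application yields $\CC(\CC(X,2),2)\in\cc[\X]$. Closure of $\cc[\X]$ under subspaces finishes the argument. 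Part (2) is identical with $Y=\IR$: from $\II\in\X$ and Proposition~\ref{p:Ascoli}(2) we get $\IR\in[\X]\subset\cc[\X]$; every $\aleph_0$-space is regular Lindel\"of, hence paracompact, hence Tychonoff $=\IR$-Tychonoff; Ascoli means $\IR$-Ascoli; and iterating the extended lemma twice places $\CC(\CC(X),\IR)$ in $\cc[\X]$, into which $X$ embeds via $\delta$.

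There is no real obstacle beyond noticing that Proposition~\ref{p:Ascoli}(3) survives when $Y$ is only in $\cc[\X]$; once that is in hand, the theorem is a direct conjunction of the canonical-embedding fact from Proposition~\ref{p:AscoliInj}(5) with two applications of this strengthened lemma, the only datum-specific check being that zero-dimensionality gives $2$-Tychonoff and that regularity plus Lindel\"of gives $\IR$-Tychonoff.
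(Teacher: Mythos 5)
Your proof is correct and follows essentially the same route as the paper: embed $X$ via the canonical map $\delta$ using Proposition~\ref{p:AscoliInj}(5), and place the double function space in $\cc[\X]$ by two applications of Proposition~\ref{p:Ascoli}(3). The only (harmless) deviations are that you use $\IR$ in place of $\II$ in the second part and that you spell out the mild strengthening of Proposition~\ref{p:Ascoli}(3) to targets $Y\in\cc[\X]$, which the paper uses implicitly (its proof only needs the target in $\cc[\X]$, since $\cc[\X]$ is itself $\CC$-stable).
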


\begin{proof}
By Proposition \ref{p:Ascoli}, the class $\X$ contains all zero-dimensional separable metrizable spaces. Now take any zero-dimensional 2-Ascoli $\aleph_0$-space $X$ and consider the canonical map $\delta:X\to \CC(\CC(X,2),2)$. By Proposition \ref{p:AscoliInj}, $\delta$ is an embedding.  Applying Proposition \ref{p:Ascoli}(3) twice, we obtain that $\CC(\CC(X,2),2)\in \cc[\X]$. Thus also $X\in \cc[\X]$.

Now assume that $\II\in \X$. Given any Ascoli $\aleph_0$-space $X$, we apply Proposition \ref{p:AscoliInj} to conclude that the canonical map $\delta:X\to \CC(\CC(X,\II),\II)$ is a topological embedding. By Proposition \ref{p:Ascoli},  the double function space $\CC(\CC(X,\II),\II)$ and hence also its subspace $X$ belong to the class $\cc[\X]$.
\end{proof}



Below we propose a characterization of Ascoli spaces which will be applied for constructing an Ascoli space $X$ which is not a $k_\IR$-space in Example~\ref{exa:Ascoli-Y-k-cont}.

\begin{myprop}\label{p:Ascoli-char}
A topological space $X$ is Ascoli if and only if each point $x\in X$ is contained in a dense Ascoli subspace of $X$.
\end{myprop}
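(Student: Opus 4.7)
The forward implication is immediate: if $X$ is Ascoli, then $X$ itself is a dense Ascoli subspace that contains every point. So the whole content of the statement lies in the converse, which I would approach as follows.

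Assume every $x\in X$ lies in some dense Ascoli subspace of $X$. To show $X$ is Ascoli, I would fix an arbitrary compact subset $\KK\subset\CC(X)$, a function $f\in\KK$, a point $x\in X$, and an open neighborhood $O_{f(x)}\subset\IR$ of $f(x)$; I then need to produce neighborhoods $U_f\subset\KK$ of $f$ and $O_x\subset X$ of $x$ with $U_f(O_x)\subset O_{f(x)}$. By hypothesis, pick a dense Ascoli subspace $D\subset X$ with $x\in D$, and by the regularity of $\IR$, shrink $O_{f(x)}$ to an open $\widetilde O_{f(x)}$ with $\mathrm{cl}_\IR(\widetilde O_{f(x)})\subset O_{f(x)}$.

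The next ingredient is the restriction operator $R:\CC(X)\to\CC(D)$, $R(g)=g|_D$, which is continuous because the compact-open topology is monotone in the domain, and which is injective because $D$ is dense in $X$. Hence $\KK|_D:=R(\KK)$ is a compact subset of $\CC(D)$, and since $D$ is Ascoli, $\KK|_D$ is evenly continuous. Applied at the point $x\in D$ and the function $f|_D\in\KK|_D$, even continuity yields an open neighborhood $W\subset\KK|_D$ of $f|_D$ and an open neighborhood $O_x\subset X$ of $x$ such that
\[
W(O_x\cap D)\subset \widetilde O_{f(x)}.
\]
Setting $U_f:=R^{-1}(W)\cap\KK$ gives an open neighborhood of $f$ in $\KK$.

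It remains to verify $U_f(O_x)\subset O_{f(x)}$, and this is the step where the argument actually uses the density of $D$ rather than merely its Ascoli property. Given $g\in U_f$ and $y\in O_x$, by density there is a net $(y_i)\subset O_x\cap D$ converging to $y$; since $g|_D\in W$, we have $g(y_i)\in W(O_x\cap D)\subset\widetilde O_{f(x)}$ for every $i$, and by continuity of $g$ the net $g(y_i)$ converges to $g(y)$, so
\[
g(y)\in\mathrm{cl}_\IR\bigl(\widetilde O_{f(x)}\bigr)\subset O_{f(x)}.
\]
This shows $\KK$ is evenly continuous at $(f,x)$, and since $f,x$ were arbitrary, $X$ is Ascoli.

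The only nontrivial step is the last one: even continuity only delivered control over points of $D$, so without passing to the closed neighborhood $\mathrm{cl}_\IR(\widetilde O_{f(x)})$ one cannot conclude that $g(y)\in O_{f(x)}$ for general $y\in O_x$. Using regularity of $\IR$ to perform this slight shrinking is the whole trick; the rest is bookkeeping about the restriction map and the fact that compactness and even continuity behave well under restriction to a dense subspace.
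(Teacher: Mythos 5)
Your proof is correct and follows essentially the same route as the paper's: restrict the compact set to the dense Ascoli subspace via the (continuous, injective) restriction operator, use regularity of $\IR$ to shrink the target neighborhood, and use density to transfer the even-continuity estimate back to $X$. The only cosmetic difference is that the paper phrases the last step by taking the closed neighborhood $\overline{W}_x$ of $x$ in $X$ and using $g(\overline{W}_x)\subset\overline{g(W_x)}$, whereas you run a net argument inside the open set $O_x$ --- the same mechanism.
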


\begin{proof}
The ``only if'' part is trivial. To prove the ``if'' part, assume that each point $x\in X$ is contained in a dense Ascoli subspace of $X$. Choose any compact set $\KK\subset C_k(X)$, function $f\in\KK$, point $x\in X$, and  neighborhood $O_{f(x)}\subset \IR$ of $f(x)$. By the regularity of $\IR$, there is a neighborhood $W_{f(x)}\subset \IR$ of $f(x)$ such that $\cl_{\IR}(W_{f(x)})\subset O_{f(x)}$. By our assumption, the point $x$ is contained in a dense Ascoli subspace $Z\subset X$. The density of $Z$ in $X$ implies that the restriction operator
\[
\zeta:C_k(X)\to C_k(Z), \quad \zeta:g\mapsto g|_Z,
\]
is injective. Since the space $Z$ is Ascoli, for the compact subset $\zeta(\mathcal K)\subset C_k(Z)$, the function $h:=f|_Z$ and the neighborhood $W_{f(x)}$ of $h(x)=f(x)$ there are neighborhoods $U_h\subset \zeta(\mathcal K)$ of $h$ and $W_x\subset Z$ of $x$  such that $U_h(W_x)\subset W_{f(x)}$. It follows that $U_f :=\{g\in \mathcal{K}:g|_Z\in U_h\}$ is a neighborhood of $f$ in $\KK$ and the closure $\overline{W}_x$ of $W_x$ in $X$ is a (closed) neighborhood of $x$ in $X$ such that $U_f(\overline{W}_x)\subset \overline{W}_{f(x)}\subset O_{f(x)}$. Thus $\KK$ is evenly continuous.
\end{proof}

\vspace{1mm}

Now we prove that the classes of Ascoli and $k_\IR$-spaces are hereditary with respect to taking closed subspaces in stratifiable spaces. We recall (see \mbox{\cite[\S5]{gruenhage}}) that a regular topological space $X$ is {\em stratifiable} if there is a function $G$ which assigns to every $n\in\w$ and each closed set $F\subset X$ an open neighborhood $G(n,F)\subset X$ of $F$ such that $F=\bigcap_{n\in\w} \overline{G(n,F)}$ and $G(n,F)\subset G(n,F')$ for any $n\in\w$ and closed sets $F\subset F'\subset X$. Reznichenko in \cite{Rez} proved that for a separable metrizable space $X$ the function space $\CC(X)$ is stratifiable if and only if the space $X$ is Polish.

\begin{myprop}\label{p:ascoli}
Let $A$ be a closed subspace of a stratifiable space $X$. Then
\begin{enumerate}
\item[{\rm (1)}] if $X$ is a $k_\IR$-space, then $A$ is a $k_\IR$-space;
\item[{\rm (2)}] if $X$ is Ascoli, then $A$ is Ascoli;
\item[{\rm (3)}] if $X$ is $2$-Ascoli and $|X\setminus A|<\mathfrak{c}$, then $A$ is 2-Ascoli.
\end{enumerate}
\end{myprop}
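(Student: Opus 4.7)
The common engine of the proof will be the Borges--Dugundji extension theorem for stratifiable spaces: there is a continuous linear extension operator
$E\colon \CC(A,\IR)\to \CC(X,\IR)$
with $E(g)|_A=g$ and $E(g)(X)$ contained in the closed convex hull of $g(A)$. It is realised by the formula $E(g)(x)=g(x)$ for $x\in A$ and $E(g)(x)=\sum_\alpha\phi_\alpha(x)\,g(a_\alpha)$ for $x\in X\setminus A$, where $\{\phi_\alpha\}$ is subordinate to a locally finite (in $X\setminus A$) open cover $\{U_\alpha\}$ of $X\setminus A$ and the base points $a_\alpha\in A$ satisfy the \emph{Dugundji property}: for every $a\in A$ and every neighborhood $V\ni a$ in $X$ there is a neighborhood $W\ni a$ such that $x\in W\setminus A$ together with $\phi_\alpha(x)>0$ forces $a_\alpha\in V$. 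I shall also use the classical fact that every compact subspace of a stratifiable space is metrizable.

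For (2), let $\KK\subset\CC(A)$ be compact. Then $E(\KK)\subset\CC(X)$ is compact and, since $X$ is Ascoli, evenly continuous on $X$. Given $f\in\KK$, $a\in A$, and a neighborhood $O_{f(a)}\subset\IR$ of $f(a)=E(f)(a)$, even continuity of $E(\KK)$ at $(E(f),a)$ supplies neighborhoods $\tilde U\subset E(\KK)$ of $E(f)$ and $W\subset X$ of $a$ with $\tilde U(W)\subset O_{f(a)}$. Then $U:=E^{-1}(\tilde U)\cap\KK$ is a neighborhood of $f$ in $\KK$, $O_a:=W\cap A$ is a neighborhood of $a$ in $A$, and $g(y)=E(g)(y)\in O_{f(a)}$ for every $g\in U$ and $y\in O_a$, giving even continuity of $\KK$.

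For (1), let $f\colon A\to\IR$ have continuous restriction to every compact subset of $A$; I plan to construct a $k$-continuous extension $\tilde f\colon X\to\IR$, which by the $k_\IR$-property of $X$ must be continuous, so that $f=\tilde f|_A$ is continuous. Set $\tilde f|_A:=f$ and $\tilde f(x):=\sum_\alpha\phi_\alpha(x)f(a_\alpha)$ on $X\setminus A$; this formula is well defined without continuity of $f$, and $\tilde f$ is continuous on $X\setminus A$ by local finiteness of $\{U_\alpha\}$. For a compact $K\subset X$, metrizability of $K$ makes sequences sufficient, so I only need to check, for $a\in A\cap K$ and a sequence $x_n\to a$ in $K$, that $\tilde f(x_n)\to f(a)$. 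The case $x_n\in A$ follows from continuity of $f|_{K\cap A}$. For $x_n\in K\setminus A$, the Dugundji property applied to arbitrarily small neighborhoods of $a$ shows that the countable set $S:=\{a_\alpha:\exists n,\,\phi_\alpha(x_n)>0\}$ intersects the complement of every neighborhood of $a$ in only finitely many points; hence $S\cup\{a\}$ is a convergent sequence and in particular a compact subset of $A$. The hypothesis on $f$ then gives $f|_{S\cup\{a\}}$ continuous and $f(a_\alpha)\to f(a)$, and convexity of the weights $\phi_\alpha(x_n)$ forces $\tilde f(x_n)\to f(a)$. I expect this to be the main technical step: identifying the correct compact set on which to invoke the hypothesis on $f$ uses both the Dugundji property and the metrizability of compact subsets of $X$.

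For (3), fix a compact $\KK\subset\CC(A,2)$; the Borges extension produces $\IR$-valued functions, so I convert them to $\{0,1\}$-valued ones by thresholding. For every $x\in X\setminus A$, local finiteness restricts the Borges sum to finitely many nonzero terms, so $E(f)(x)$ depends on only finitely many binary values $f(a_\alpha)$, and therefore $E(\KK)(x)\subset[0,1]$ is a finite set. Consequently $S:=\bigcup_{x\in X\setminus A}E(\KK)(x)$ has cardinality at most $|X\setminus A|\cdot\aleph_0<\mathfrak{c}$, and I can choose some $t\in(0,1)\setminus S$. For this $t$ every $E(f)$ with $f\in\KK$ avoids the value $t$, so $\tilde f:=\mathbf{1}_{E(f)>t}$ is a continuous map $X\to 2$; and since $E(f)|_A=f\in\{0,1\}$ while $t\in(0,1)$, the restriction $\tilde f|_A$ coincides with $f$. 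The assignment $f\mapsto\tilde f$ is a continuous injection of $\KK$ into $\CC(X,2)$, its image is compact and, by the $2$-Ascoli property of $X$, evenly continuous on $X$; restricting back to $A$ along this injection, exactly as in (2), yields even continuity of $\KK$.
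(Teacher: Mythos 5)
Your proposal is correct and rests on the same engine as the paper: the Borges extension operator for a closed subspace of a stratifiable space, combined with metrizability of compact subsets of stratifiable spaces. The paper extracts from the proof of Borges's Theorem~4.3 an upper semicontinuous finite-valued map $u:X\to[A]^{<\w}$ and a continuous map $\mu:X\to \mathsf{P}_\w(A)$ with $\mu_a=\delta_a$ for $a\in A$; this is exactly the data of your Dugundji system $(\phi_\alpha,a_\alpha)$, so your key input is legitimate, and your part~(2) coincides with the paper's argument. In part~(1) you argue sequentially: metrizability of a compact $K$ plus the Dugundji property lets you trap the relevant base points in a convergent sequence $S\cup\{a\}\subset A$ on which $f$ is continuous; the paper instead notes that $u[K]=\bigcup_{x\in K}u(x)$ is compact by upper semicontinuity, so that $f|_{u[K]}$ is continuous and $\bar f|_K$ is continuous at all points of $K$ at once --- slightly cleaner, but your version works. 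The genuine divergence is part~(3): the paper forms the set $Y\subset[0,1]$ of possible values of the measures $\mu_x$, $x\in X\setminus A$, notes $|Y|<\mathfrak{c}$ so $Y$ is zero-dimensional, invokes Corollary~\ref{l:AscoliCat} to make $X$ a $Y$-Ascoli space, and runs the even-continuity pullback inside $\CC(X,Y)$; you use the same finiteness-of-values observation to choose a threshold $t\in(0,1)$ avoided by every $E(f)$, $f\in\KK$, converting the extensions into honest elements of $\CC(X,2)$ and applying the $2$-Ascoli property of $X$ directly. Your route is more self-contained (no appeal to $Y$-Ascoli or to zero-dimensionality of small subsets of $\IR$); the only step you leave implicit is that the thresholding map $g\mapsto \mathbf{1}_{\{g>t\}}$ is continuous for the compact-open topologies on functions omitting the value $t$ (needed so that $\{g\in\KK:\tilde g\in\tilde U\}$ is a neighborhood of $f$), which is a routine check: if $g_0(K)\subset(t,\infty)$ then the neighborhood $[K;(t,\infty)]$ suffices, and if $g_0(K)\subset(-\infty,t)$ then $[K;(-\infty,t)]$ does.
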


\begin{proof}
To prove the  proposition we need the following construction due to Borges (see Section 4 and the proof of Theorem 4.3 \cite{Bor}).
Let $[A]^{<\w}$ be the set of non-empty finite subsets of $A$ and $\mathsf P_\w(A)$ be the space of probability measures with finite support on $A$. The space $\mathsf P_\w(A)$ is considered with the weak-star topology inherited from the dual space $C^*_b(A)$ of the Banach space $C_b(A)$ of all bounded continuous real-valued functions on $A$ endowed with the sup-norm. The weak-star topology on the dual Banach space $C^*_b(A)$ is inherited from the Tychonoff product topology on $\IR^{C_b(A)}$.
A map $u:X\to [A]^{<\w}$ is called {\em upper semicontinuous} if for any open set $U\subset A$ the set $\{x\in X:u(x)\subset U\}$ is open in $X$.
It was shown in (the proof of Theorem 4.3 of)  \cite{Bor}  that there exist an upper semicontinuous map $u:X\to [A]^{<\w}$ with $u(a)=\{a\}$ for all $a\in A$, and a continuous map $\mu:X\to \mathsf P_\w(A)$ such that for each point $x\in X$  the finite set $u(x)$ has measure 1 with respect to the measure $\mu(x)$ which will be denoted by $\mu_x$. Since for any point $a\in A$ the set $u(a)$ coincides with the singleton $\{a\}$, the measure $\mu_a$ coincides with the Dirac measure $\delta_a$ concentrated at $a$. Then the map $\mu:X\to\mathsf P_\w(X)$ induces a continuous linear  operator $e:\CC(A)\to \CC(X)$ assigning to each function $f\in \CC(A)$ the function $e(f)\in \CC(X)$, $e(f):x\mapsto \mu_x(f)$ (the value $e(f)(x)=\mu_x(f)$ is well-defined as the measure $\mu_x$ has finite support).  Observe that for every point $a\in A$ we get $e(f)(a)=\mu_a(f)=\delta_a(f)=f(a)$, so $e$ is an extension operator.
\smallskip

(1)  Assume that the space $X$ is a $k_\IR$-space. To show that $A$ is a $k_\IR$-space, take any function $f:A\to \IR$ such that for every compact subset $K\subset A$ the restriction $f|_A$ is continuous. Consider the function $\bar f:X\to\IR$ defined by $\bar f(x):= e(f)(x)=\mu_x(f)$ for $x\in X$. We show that for every compact subset $K\subset X$ the restriction $\bar f|_K$ is continuous. The upper semicontinuity of the map $u$ and the compactness of $K$ imply the compactness of the set
\[
u[K]:= \bigcup_{x\in K}u(x)\subset A.
\]
By our assumption, the restriction $f|_{u[K]}$ is continuous. Then the continuity of the map $\mu|_K:K\to \mathsf P_\w(u[K])$ guarantees  the continuity of the function $\bar f|_K$ (as $\bar f(x)=\mu_x(f|_{u[K]})$ for any $x\in K$).
Taking into account that $X$ is a $k_\IR$-space, we conclude that the function $\bar f:X\to\IR$ is continuous and so is its restriction $f=\bar f|_A$.
\smallskip

(2) Now assume that $X$ is an Ascoli space. To show that the space $A$ is Ascoli, fix any compact subset $\K\subset \CC(A)$. Given a function $f\in \K$, a point $a\in A$ and a neighborhood $O_{f(a)}\subset\IR$ of $f(a)$, we need to find neighborhoods $U_f\subset\K$ of $f$ and $O_a\subset A$ of $a$ such that $U_f(O_a)\subset O_{f(a)}$. As the extension operator $e:\CC(A)\to \CC(X)$ is continuous, the set $\tilde\K:=e(\K)\subset \CC(X)$ is compact.   Since the space $X$ is Ascoli, for the function $\bar f=e(f)\in \tilde\K$ there are neighborhoods $U_{\bar f}\subset \tilde\K$ of $\bar f$ and $\tilde O_a\subset X$ of $a$ such that $U_{\bar f}(\tilde O_a)\subset O_{f(a)}$. By the continuity of the operator $e$, there is a neighborhood $U_f\subset\K$ of $f$ such that $e(U_f)\subset U_{\bar f}$. Then for the neighborhood $O_a=\tilde O_a\cap A$ of $a$ in $A$, we get $U_f(O_a)\subset U_{\bar f}(\tilde O_a\cap A)\subset O_{f(a)}$, which means that the compact set $\K$ is evenly continuous.
\smallskip

(3) Assume that $X$ is 2-Ascoli and $|X\setminus A|<\mathfrak c$. Consider the map $\mu:X\to\mathsf P_\w(A)$, and observe that for every $x\in X$ the set
\[
Y_x=\{\mu_x(B):B\subset X \}
\]
of all possible values of the measure $\mu_x:=\mu(x)$ is finite. Since $|X\setminus A|<\mathfrak c$, the set $Y=\bigcup_{x\in X\setminus A}Y_x\subset[0,1]$ has cardinality $<\mathfrak c$.  By (the proof of) Corollary 6.2.8 in \cite{Eng},
the regular space $Y$ is zero-dimensional. Now Corollary \ref{l:AscoliCat} implies that the 2-Ascoli space $X$ is $Y$-Ascoli.

To show that the space $A$ is 2-Ascoli, fix any compact subset $\K\subset \CC(A,2)$. Given a function $f\in \K$, a point $a\in A$ and a neighborhood $O_{f(a)}\subset 2=\{0,1\}$ of $f(a)$, we need to find neighborhoods $U_f\subset\K$ of $f$ and $O_a\subset A$ of $a$ such that $U_f(O_a)\subset O_{f(a)}$.
Take any neighborhood $\tilde O_{f(a)}\subset \IR$ of $f(a)$ such that $\tilde O_{f(a)}\cap\{0,1\}=O_{f(a)}$. Consider the compact set $\tilde\K:=e(\K)\subset \CC(X)$ and the function $\bar f=e(f)\in \tilde\K\subset \CC(X)$. The definition of the extension operation $e$ implies that $\tilde \K\subset \CC(X,Y)\subset\CC(X,\IR)$. Since the space $X$ is $Y$-Ascoli, there are neighborhoods $U_{\bar f}\subset \tilde\K$ of $\bar f$ and $\tilde O_a\subset X$ of $a$ such that $U_{\bar f}(\tilde O_a)\subset \tilde O_{f(a)}$. By the continuity of the operator $e$, there is a neighborhood $U_f\subset\K$ of $f$ such that $e(U_f)\subset U_{\bar f}$. Then for the neighborhood $O_a=\tilde O_a\cap A$ of $a$ in $A$, we get $U_f(O_a)\subset \{0,1\}\cap U_{\bar f}(\tilde O_a\cap A)\subset \{0,1\}\cap \tilde O_{f(a)}=O_{f(a)}$, which means that the compact set $\K$ is evenly continuous, and $A$ is 2-Ascoli.
\end{proof}

The following proposition will be used also in the last section. Recall that a topological space $X$ is called {\em scattered} if each non-empty subspace of $X$ contains an isolated point.

\begin{myprop}\label{p:2-Ascoli-Disc}
A zero-dimensional 2-Ascoli space $X$ is sequential if it satisfies one of the following conditions:
\begin{enumerate}
\item[{\rm (1)}] each compact subset of $X$ is finite (in this case $X$ is discrete);
\item[{\rm (2)}] $X$ is stratifiable, scattered, and has cardinality $|X|<\mathfrak c$.
\end{enumerate}
\end{myprop}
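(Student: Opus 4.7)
The plan is to argue by contradiction in both parts, in each case producing a compact subset of $\CC(X,2)$ whose even continuity fails, thereby violating the 2-Ascoli property.

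\textbf{Part (1).} Assume some $x_0\in X$ is non-isolated and derive a contradiction. Because every compact subset of $X$ is finite, the compact-open and pointwise topologies on $C(X,2)$ coincide, so $\CC(X,2)=C_p(X,2)$. The substantive step is a recursive construction of pairwise disjoint nonempty clopen subsets $C_n\subset X\setminus\{x_0\}$ whose union accumulates at $x_0$ in the sense that every clopen neighborhood of $x_0$ meets infinitely many $C_n$. At step $n$, zero-dimensional $T_1$ separation together with non-isolation of $x_0$ furnishes a nonempty clopen $C_n$ inside a prescribed clopen neighborhood $V_n\ni x_0$ missing $\bigcup_{k<n}C_k$; the coordination of the $V_n$'s is what ensures accumulation at $x_0$. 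Then
\[
K := \{\mathbf{1}\}\cup\{\chi_{X\setminus C_n}:n\in\w\}
\]
is a pointwise-convergent sequence with limit $\mathbf{1}$, hence compact in $C_p(X,2)$. Even continuity at $(\mathbf{1},x_0)$ fails: for any finite $F\subset X$ and any clopen neighborhood $O\ni x_0$, pairwise disjointness gives cofinitely many $n$ with $C_n\cap F=\emptyset$, and accumulation gives such an $n$ with $C_n\cap O\ne\emptyset$, so $\chi_{X\setminus C_n}$ lies in the basic neighborhood $[F;\{1\}]\cap K$ of $\mathbf{1}$ yet takes value $0$ at a point of $O$. This contradicts 2-Ascoli, forcing $X$ to be discrete.

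\textbf{Part (2).} Assume $X$ is not sequential; fix $A\subset X$ that is sequentially closed but not closed, and pick $x\in\overline A\setminus A$. Stratifiability implies $\{x\}$ is $G_\delta$ in $X$; zero-dimensionality refines this to a decreasing sequence of clopen neighborhoods $V_0\supset V_1\supset\cdots$ of $x$ with $\bigcap_{n\in\w} V_n=\{x\}$, which by first-countability of stratifiable spaces may be arranged as a neighborhood base at $x$. Since $X$ is scattered, each nonempty $A\cap V_n$ (nonempty as $x\in\overline A$) contains a point $a_n$ isolated in $A\cap V_n$; arrange the $a_n$ pairwise distinct. The decreasing clopen structure of $\{V_n\}$ with intersection $\{x\}$ then makes $B:=\{x\}\cup\{a_n:n\in\w\}$ closed in $X$: any $y\in X\setminus B$ can be separated from $x$ by disjoint clopens, after which only finitely many $a_n$ remain in a clopen neighborhood of $y$. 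Since $|X\setminus B|\le|X|<\mathfrak{c}$, Proposition~\ref{p:ascoli}(3) transfers 2-Ascoli to the zero-dimensional subspace $B$. The point $x$ is non-isolated in $B$ (every clopen neighborhood of $x$ contains some $V_m$, hence infinitely many $a_n$), so $B$ is not discrete, whence part (1) applied to $B$ forces $B$ to contain an infinite compact subset; being countable Hausdorff, this compact subset contains a convergent sequence. Each $a_m$ is isolated in $B$ (using $V_m$ and the decreasing structure), so the only possible limit is $x$. This produces a sequence in $A$ converging to $x$, contradicting sequential closedness of $A$.

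\textbf{Main obstacle.} In (1), the technical heart is coordinating the recursive construction so that the $C_n$'s accumulate at $x_0$ --- when $x_0$ has uncountable character the naive choice of $V_n$'s as a countable base is unavailable, and one must exploit either isolated points of $X$ near $x_0$ (as happens in ultrafilter-type examples) or a more subtle selection of clopens. In (2), the delicate checks are verifying the closedness of $B$ in $X$ (needed for Proposition~\ref{p:ascoli}(3)) and identifying that the convergent sequence in $B$ produced by (1) must converge to $x$ rather than to any $a_m$.
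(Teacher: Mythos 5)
Both parts of your proposal stall exactly at the steps you yourself flag as the ``main obstacle,'' and in each case the gap is genuine rather than technical. In (1), your whole argument hinges on producing a \emph{countable} disjoint family of nonempty clopen sets $C_n\subset X\setminus\{x_0\}$ whose union accumulates at $x_0$, but the recursion you sketch uses only zero-dimensionality, non-isolation of $x_0$ and finiteness of compact sets, and from these ingredients such a family need not exist: in the one-point Lindel\"ofication $X=\{x_0\}\cup D$ of an uncountable discrete set $D$ (neighbourhoods of $x_0$ are the co-countable sets), every clopen set missing $x_0$ is countable, so any countable family of them has countable union and some neighbourhood of $x_0$ misses all of them. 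That space satisfies every hypothesis you actually use in the construction, so your recursion cannot be completed in general; to save it you would have to invoke the 2-Ascoli hypothesis inside the construction, and you give no indication how. The paper sidesteps countability entirely: take by Zorn's Lemma a \emph{maximal} disjoint family $\U$ of nonempty clopen sets not containing $x_0$; maximality gives $x_0\in\overline{\bigcup\U}$, hence every neighbourhood of $x_0$ meets infinitely many members of $\U$, and the (possibly uncountable) set $\{\chi_\emptyset\}\cup\{\chi_U:U\in\U\}$ is still compact in $\CC(X,2)$ because each compact (hence finite) subset of $X$ meets only finitely many members of the disjoint family $\U$. After that the even-continuity contradiction runs just as in your text (your use of $\chi_{X\setminus C_n}$ with limit $\mathbf{1}$ instead of $\chi_U$ with limit $\chi_\emptyset$ is immaterial).

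In (2) the decisive step is simply false: stratifiable spaces need not be first countable (the Fr\'echet--Urysohn fan $Y$ of Proposition~\ref{p:notascoli} is stratifiable and not first countable; so are $\CC(P)$ for Polish non-locally-compact $P$, by Reznichenko). Note that if stratifiable did imply first countable, statement (2) would be trivial --- first countable spaces are sequential with no appeal to 2-Ascoliness or scatteredness --- which is a warning sign; indeed your argument barely uses scatteredness, whereas the paper uses it essentially. Without a countable clopen base at $x$, your points $a_n\in A\cap V_n$ need not accumulate at $x$, so $x$ may be isolated in $B$ and the reduction to part (1) collapses; producing a countable set $C\subset A$ with $\overline{C}=C\cup\{x\}$ is precisely the hard point. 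The paper argues differently: since compact subsets of stratifiable spaces are metrizable, it suffices to show $X$ is a $k$-space; assuming not, one takes $A$ with $A\cap K$ compact for all compact $K$, passes to $\overline{A}$ (2-Ascoli by Proposition~\ref{p:ascoli}, using $|X|<\mathfrak{c}$), chooses $a\in X\setminus A$ of minimal scattered height to get a clopen $O_a$ with $O_a\setminus A=\{a\}$, and applies Dowker's theorem to the paracompact strongly zero-dimensional space $O_a\setminus\{a\}$ to obtain a disjoint clopen family $\V$ accumulating at $a$; compactness of $\{\chi_\emptyset\}\cup\{\chi_V:V\in\V\}$ is then verified using the non-$k$ witness $A$, and the contradiction with even continuity finishes as in (1). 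Your concluding steps (closedness of $B$, isolation of the $a_m$, extracting a convergent sequence from an infinite compact subset of $B$) are fine, but they rest on the unsupported construction of $B$.
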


\begin{proof}
(1) Assume that the 2-Ascoli space $X$ does not contain infinite compact subsets. We shall prove that $X$ is discrete (and hence sequential). Assuming that $X$ is not discrete, fix a non-isolated point $x\in X$. By Zorn's Lemma, there exists a maximal disjoint family $\U$ of non-empty closed-and-open subsets of $X$ which do not contain the point $x$. By the maximality of $\U$, the point $x$ belongs to the closure of the union $\bigcup\U$ in $X$. Since all compact subsets of $X$ are finite, the set of characteristic functions $K=\{\chi_\emptyset\}\cup\{\chi_U:U\in\U\}\subset \CC(X,2)$ is compact with a unique non-isolated point $\chi_\emptyset$. Since $X$ is 2-Ascoli, the set $\K$ is evenly continuous. Consequently, we can find a neighborhood $U_{\chi_\emptyset}\subset\K$ of the constant zero function $\chi_\emptyset:X\to 2$ and a  neighborhood $O_x\subset X$ of the point $x$ such that $U_{\chi_\emptyset}(O_x)\subset\{0\}$.

As $\chi_\emptyset$ is a unique non-isolated point of the compact set $\K$, the set $\K\setminus U_{\chi_{\emptyset}}$ is finite. Since the neighborhood $O_x$ meets infinitely many sets $U\in\U$, we can find a set $U\in\U$ such that $O_x\cap U\ne\emptyset$ and $\chi_U\in U_{\chi_\emptyset}$. Then for any point $u\in O_x\cap U$ we get $1=\chi_U(u)\in U_{\chi_\emptyset}(O_x)\subset\{0\}$, a  contradiction. Thus the space $X$ is discrete (and sequential).
\smallskip

(2) Assume that $X$ is a 2-Ascoli stratifiable scattered space of cardinality $|X|<\mathfrak c$. We need to prove that the space $X$ is sequential.
Since all compact subsets of the stratifiable space $X$ are metrizable (see \cite[4.7 and 5.9]{gruenhage}), it suffices to show that $X$ is a $k$-space. Assuming the opposite, we can find a non-closed subset $A\subset X$ such that for each compact subset $K\subset X$ the intersection $K\cap A$ is compact. By Proposition~\ref{p:ascoli}, the closed subspace $\bar A$ of the stratifiable 2-Ascoli space $X$ is 2-Ascoli.
Replacing the space $X$ by the closure $\bar A$ of $A$ in $X$, we can assume that the set $A$ is dense in $X$.

For a subset $B\subset X$ let $B^{(1)}$ denote the set of all non-isolated points of $B$. Let $X^{(0)}=X$, and for every ordinal $\alpha>0$ put $X^{(\alpha)}=\bigcap_{\beta<\alpha}(X^{(\beta)})^{(1)}$. Since $X$ is scattered, for some ordinal $\alpha$ the set $X^{(\alpha)}$ is empty. So for each point $x\in X$ we can assign the unique ordinal $\hbar(x)$ (called the {\em scattered height of $x$}) such that $x\in X^{(\hbar(x))}\setminus X^{(\hbar(x)+1)}$. Consider the ordinal $\alpha=\min\{\hbar(x):x\in X\setminus A\}$ and choose a point $a\in X\setminus A$ with $\hbar(a)=\alpha$. Since $a$ is an isolated point of the set $X^{(\alpha)}\setminus X^{(\alpha+1)}$ and $X$ is zero-dimensional,
we can find a closed-and-open neighborhood $O_a\subset X$ of $a$ such that $O_a\cap X^{(\alpha)}=\{a\}$. This means that each point $x\in O_a\setminus\{a\}$ has scattered height $\hbar(x)<\alpha$. The definition of the ordinal $\alpha$ implies that $O_a\setminus A=\{a\}$ and hence $O_a\cap A=O_a\setminus\{a\}$.

The space $Z:=O_a\setminus\{a\}$ is stratifiable, and hence paracompact \cite[5.7]{gruenhage}. By (the proof of) Corollary 6.2.8 of \cite{Eng}, the space $Z$ of cardinality $|Z|<\mathfrak c$ is strongly zero-dimensional and hence has covering dimension $\dim(Z)=0$.
Since $Z$ is paracompact and has covering dimension zero we can apply Dowker's Theorem \cite[7.2.4]{Eng} and conclude that the open cover
\[
\mathcal{W}=\{ Z\setminus U: \ U \mbox{ is a clopen neighborhood of } a \mbox{ in } O_a\}
\]
of $Z$ has a disjoint open refinement $\mathcal{V}$ covering $Z$. Observe that each element $V\in \mathcal{V}$ is  closed in $O_a$ since $V=(O_a\setminus U)\setminus \bigcup\{ V'\in \mathcal{V}: V' \not= V\}$ for some clopen subset $U$ in $O_a$.
Since $a$ is an accumulation point of $O_a\setminus\{a\}$, each neighborhood $U_a\subset X$ of $a$ meets infinitely many (clopen) sets $V\in\V$.

We claim that the set  of characteristic functions
\[
\K=\{\chi_\emptyset\}\cup\{\chi_V:V\in\V\}
\]
is compact in $\CC(X,2)$ and has a unique non-isolated point $\chi_\emptyset$. It suffices to check that any neighborhood $O_{\chi_\emptyset}\subset \CC(X,2)$ of $\chi_\emptyset$ contains all but finitely many functions $f\in\K$. Without loss of generality we can assume that the neighborhood $O_{\chi_\emptyset}$ is of the basic form
\[
O_{\chi_\emptyset}=\big\{f\in \CC(X,2):f(K)\subset\{0\}\big\},
\]
for some compact set $K\subset X$ containing the point $a$. By the choice of the set $A$, the intersection $A\cap K$ is compact and so is its closed subset $(O_a\cap A)\cap K= (O_a\setminus\{a\})\cap K$.
This means that $a$ is an isolated point of the compact space $K$. Since $\V$ is a disjoint open cover of $O_a\setminus \{a\}$, the compactness of $(O_a\setminus\{a\})\cap K$ guarantees that $K$ meets only finitely many sets $V\in\V$. This implies that the neighborhood $O_{\chi_\emptyset}$ contains all but finitely many characteristic functions $\chi_V$, $V\in\V$. So, $\K\subset \CC(X,2)$ is a compact set with the unique non-isolated point $\chi_\emptyset$.

Since the space $X$ is 2-Ascoli, the compact set $\K$ is evenly continuous. This allows us to find a neighborhood $U_{\chi_\emptyset}\subset\K$ of the constant zero function $\chi_\emptyset$ and a neighborhood $W_a\subset X$ of the point $a$ such that $U_{\chi_\emptyset}(W_a)\subset\{0\}$. Since $\chi_\emptyset$ is a unique non-isolated point of the compact set $\K$ the set
\[
\V'=\{V\in\V:\chi_V\in U_{\chi_\emptyset}\}
\]
has finite complement $\V\setminus\V'$. As each neighborhood of $a$ meets infinitely many sets $V\in\V$, we can find a set $V\in\V'$ such that the intersection $W_a\cap V$ contains some point $v$. Then $1=\chi_V(v)\in U_{\chi_\emptyset}(W_a)\subset\{0\}$, a contradiction. Thus the space $X$ is sequential.
\end{proof}

Now we present an example of two Fr\'echet-Urysohn stratifiable $\aleph_0$-spaces $X$ and $Y$ whose product $X\times Y$ is 2-Ascoli but not Ascoli.
The space $X$ is the following $\sigma$-compact subspace of the real plane
\[
X:=\bigcup_{n\in\w\setminus\{ 0\} }\{(t,t/n):t\in[0,1]\}\subset \IR^2,
\]
which is called the {\em connected metric fan}. The space $Y$ is the space $X$ endowed with the strongest topology inducing the Euclidean topology on each arc
\[
I_n=\{(t,t/n):t\in[0,1]\}, \quad n>0.
\]
The space $Y$ is called the {\em connected Fr\'echet-Urysohn fan} and is a (non-metrizable) Fr\'echet-Urysohn $k_\omega$-space.

The following proposition shows that the class of Ascoli spaces is not productive and the class of  2-Ascoli spaces is neither productive nor closed hereditary.

\begin{myprop}\label{p:notascoli}
The spaces $X$ and $Y$ have the following properties:
\begin{enumerate}
\item[{\rm (1)}] $X$ is separable and  metrizable,  while $Y$ is a Fr\'echet-Urysohn stratifiable $\aleph_0$-space;
\item[{\rm (2)}] the spaces $X$ and $Y$ are Ascoli;
\item[{\rm (3)}] the product $X\times Y$ is 2-Ascoli but is not Ascoli;
\item[{\rm (4)}] the spaces $X$ and $Y$ contain closed countable scattered subspaces $X_0\subset X$ and $Y_0\subset Y$ whose product $X_0\times Y_0$ is not 2-Ascoli.
\end{enumerate}
\end{myprop}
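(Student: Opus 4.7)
Parts (1) and (2) are standard. Separability and metrizability of $X$ are inherited from $\IR^2$. The space $Y$, being the wedge $\bigvee_n I_n$ with the quotient ($k_\w$) topology, is a compact-covering quotient of the separable metric sum $\bigoplus_n I_n$ and so is an $\aleph_0$-space; it is a one-dimensional CW complex and so is stratifiable; and it is Fr\'echet--Urysohn at $0$, because if $0\in\overline{A}$ but $0\notin\overline{A\cap I_n}$ in $I_n$ for every $n$, then for each $n$ one finds $\delta_n>0$ such that $A\cap I_n$ avoids the initial segment of $I_n$ of Euclidean radius $\delta_n$, yielding a neighbourhood of $0$ disjoint from $A$, a contradiction. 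Part (2) follows from Ascoli's theorem \cite[3.4.20]{Eng}: $X$ is metric and $Y$ is sequential, so both are $k$-spaces.

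For part (3), the 2-Ascoli property is trivial because both $X$ and $Y$ are path-connected through $0$, so $X\times Y$ is path-connected, $C_k(X\times Y,2)$ is the discrete two-point space of constants, and every finite subset is evenly continuous. The failure of the Ascoli property of $X\times Y$ will be deduced from part (4) via Proposition~\ref{p:ascoli}(2). To set up (4), put $x_n=(1/n, 1/n^2)\in I_n\subset X$ and $y_{n,k}=(1/k, 1/(nk))\in I_n\subset Y$, and
\[
X_0 := \{0\}\cup\{x_n : n\ge 1\},\qquad Y_0 := \{0\}\cup\{y_{n,k}: n,k\ge 1\}.
\]
Then $X_0$ is a convergent sequence in the metric space $X$ (closed, countable, scattered), and $Y_0$ is a copy of the sequential fan inside $Y$: it is closed since each row $\{y_{n,k}\}_k$ converges to $0$ along $I_n$ and distinct rows do not accumulate elsewhere, and it is countable and scattered. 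Consequently $X_0\times Y_0$ is a countable closed subspace of $X\times Y$ that is zero-dimensional, scattered, stratifiable (inheriting from the finite product of stratifiable spaces), and of cardinality $<\mathfrak{c}$.

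The core step is to show that $X_0\times Y_0$ is \emph{not} sequential; Proposition~\ref{p:2-Ascoli-Disc}(2), contrapositively, then yields that $X_0\times Y_0$ is not 2-Ascoli, proving (4). Consider the off-diagonal set $G := \{(x_m,y_{n,k}): m\ne n\}\subset X_0\times Y_0$. Every element of $G$ is the product of an isolated point of $X_0$ with an isolated point of $Y_0$, hence is isolated in $X_0\times Y_0$; therefore a convergent sequence in $G$ is eventually constant and $G$ is sequentially closed. Yet $(0,0)\in\overline{G}$: a basic neighbourhood of $(0,0)$ in $X_0\times Y_0$ has the form
\[
W_{N,f} = \bigl(\{0\}\cup\{x_m : m\ge N\}\bigr)\times\bigl(\{0\}\cup\{y_{n,k}: k\ge f(n)\}\bigr)
\]
with $N\in\NN$ and $f:\NN\to\NN$, and the point $(x_N, y_{N+1, f(N+1)})\in G\cap W_{N,f}$ exhibits $G$ as non-closed. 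Thus $X_0\times Y_0$ is not sequential, completing (4). Finally, since $X\times Y$ is stratifiable, were it Ascoli, Proposition~\ref{p:ascoli}(2) would force its closed subspace $X_0\times Y_0$ to be Ascoli and hence 2-Ascoli by Corollary~\ref{l:AscoliCat}(1), contradicting (4); this completes (3). The principal obstacle is the simultaneous verification of $(0,0)\in\overline{G}$ and the sequential closedness of $G$: this captures precisely the topological mismatch between the metric structure on $X$ and the $k_\w$-topology on $Y$ that prevents $X\times Y$ from being a $k$-space, and ultimately from being Ascoli.
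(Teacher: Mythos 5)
Parts (1), (2) and the 2-Ascoli half of (3) are fine and essentially the paper's argument (connectedness of $X\times Y$ forces $C_k(X\times Y,2)$ to consist of the two constants). The fatal problem is in part (4), and with it your deduction of the non-Ascoli half of (3). You chose $X_0=\{0\}\cup\{x_n:n\ge1\}$ to be a \emph{single convergent sequence}, i.e. a compact metrizable space. But then $X_0\times Y_0$ \emph{is} sequential: if $q:\bigoplus_n S_n\to Y_0$ is the quotient map exhibiting $Y_0$ as the sequential fan, then by Whitehead's theorem (quotient maps are productive with locally compact spaces) $q\times\mathrm{id}_{X_0}$ is a quotient map, so $X_0\times Y_0$ is a quotient of a metrizable space and hence sequential (this is Boehme's theorem on products of sequential spaces with locally compact sequential factors). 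So no choice of argument can show this particular $X_0\times Y_0$ is non-sequential, and Proposition~\ref{p:2-Ascoli-Disc} cannot be applied in the contrapositive. The concrete flaw in your argument is the claim that $G=\{(x_m,y_{n,k}):m\ne n\}$ is sequentially closed ``because its points are isolated'': isolatedness of the points of $G$ only shows that a sequence in $G$ converging to a point \emph{of} $G$ is eventually constant; it says nothing about sequences in $G$ converging to points outside $G$. Indeed $(x_j,y_{1,1})_{j\ge 2}$ is a sequence in $G$ converging to $(0,y_{1,1})\notin G$, so $G$ is not sequentially closed, and your witness $(0,0)\in\overline{G}$ produces no contradiction.

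The paper avoids this by taking $X_0$ to be the \emph{metric fan} inside $X$, namely $X_0=\{(0,0)\}\cup\{x_{n,m}\}$ with $x_{n,m}=(\tfrac1n,\tfrac1{nm})$, so that for each fixed $n$ the set $\{x_{n,m}\}_{m}$ is closed and discrete in $X$ (its Euclidean limit $(\tfrac1n,0)$ lies outside $X$), while $Y_0$ is the sequential fan as in your construction; the non-sequentiality of (metric fan)$\times$(sequential fan) is the classical fact cited from \cite{Ba98}. If you replace your $X_0$ by this metric fan and either cite that fact or prove it (the standard argument uses a set meeting each ``column'' of $X_0$ crossed with a suitably chosen point of the $n$-th spine of $Y_0$, not your off-diagonal set $G$), the rest of your scheme — Proposition~\ref{p:2-Ascoli-Disc}(2) to get non-2-Ascoli, then Proposition~\ref{p:ascoli}(2) and stratifiability of $X\times Y$ to get that $X\times Y$ is not Ascoli — goes through exactly as in the paper.
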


\begin{proof}
The metrizability and separability of $X$ is clear. It follows that the space $Y$ is the image of the metrizable separable space $\oplus_{n> 0}I_n$ under a closed compact-covering map.  This implies that $Y$ is a stratifiable $\aleph_0$-space, see Theorem 5.5 in \cite{gruenhage}. Then the product $X\times Y$ is a stratifiable $\aleph_0$-space (by the productivity of the classes of stratifiable and $\aleph_0$-spaces, see \cite[5.10 and 11.2]{gruenhage}). The spaces $X$ and $Y$ are Ascoli spaces (being $k$-spaces). The connectedness of the space $X\times Y$ implies that this space is 2-Ascoli.

In the spaces $X$ and $Y$ consider the countable subspaces
\[
X_0=\{x_{\infty}\}\cup\{x_{n,m}:n,m>0\}\subset X \; \mbox{ and }\;  Y_0=\{y_\infty\}\cup\{y_{n,m}:n,m>0\} \subset Y,
\]
where
\[
x_\infty=(0,0)=y_{\infty}, \; x_{n,m}=(\tfrac1n,\tfrac1{nm}) \quad  \mbox{ and } \quad  y_{n,m}=(\tfrac1m,\tfrac1{mn}), \;  \forall n,m>0.
\]
It follows that for every $n>0$ the set $\{x_{n,m}\}_{m>0}$ is closed and discrete in $X$, while the sequence $\{y_{n,m}\}_{m>0} \subset I_n$  converges  to $y_\infty$ in $Y$. It is known (see, e.g. \cite{Ba98}) that the space $X_0\times Y_0$ is not sequential.  By Proposition~\ref{p:2-Ascoli-Disc}(2), the countable scattered space $X_0\times Y_0$ is not 2-Ascoli and hence not Ascoli. Since $X_0\times Y_0$ is a closed subspace of the stratifiable space $X\times Y$, we can apply Proposition~\ref{p:ascoli} to conclude that the space $X\times Y$ is not Ascoli.
\end{proof}


\begin{myproblem}
Is each zero-dimensional 2-Ascoli space Ascoli?
\end{myproblem}



\section{Examples and Open Questions}


In this section we provide examples which show that the implications in the diagram presented in the introduction cannot be reversed and pose several open questions.

\begin{myexample}
There exists an Ascoli $\aleph_0$-space which is a $k_\IR$-space but is not a $k$-space.
\end{myexample}

\begin{proof}
In \cite{Mi73} Michael constructed an $\aleph_0$-space $X$, which is a $k_\IR$-space but is not a $k$-space (and hence is not sequential). By \cite{Noble}, the space $X$ is Ascoli.
\end{proof}

\begin{myexample}
The class $\cc[\M_0]$ contains a countable topological group $\Delta$ such that
\begin{itemize}
\item[$\bullet$] $\Delta$ is not discrete but all compact subsets of $\Delta$ are finite;
\item[$\bullet$] $\Delta$ embeds into the product $F\times G$ of a countable $k_\omega$-group $F$ and\newline a metrizable group $G$;
\item[$\bullet$] $\Delta$ is stratifiable;
\item[$\bullet$] $\Delta$ is not 2-Ascoli and hence is not Ascoli.
\end{itemize}
\end{myexample}

\begin{proof}
Let $F$ be the free abelian topological group over the convergent sequence $\{ 0\}\cup\{\frac{1}{n} : n>0\}\subset\IR$. It is well-known that $F$ is a countable $k_\w$-space and hence $F$ is a sequential stratifiable $\aleph_0$-space (for the stratifiability of $F$, see \cite[5.5]{gruenhage}). By Theorem~\ref{t-Ascoli}, the topological space $F$ belongs to the class $\cc(\M_0)$ (recall that any sequential space is Ascoli).

Denote by $G$ the free abelian group $F$, endowed with the metrizable group topology $\, \tau \,$ whose $\,$ neighborhood $\,$ base at zero consists of the subgroups \mbox{$2^kF$, $k\in\w$}. Being metrizable and separable, the topological group $G$ is a  \mbox{$\CC[\M_0]$-space}. By \cite{BaT}, the diagonal subgroup $\Delta=\{(x,y)\in F\times G:x=y\}$ is not discrete, and by \cite{Ga30} every compact subset of $\Delta$ is finite. It follows that the space $\Delta$ is not a $k$-space. Applying
Proposition \ref{p:2-Ascoli-Disc}(1),  we conclude that $\Delta$ is not 2-Ascoli (and hence not Ascoli). Since both spaces $F$ and $G$ are stratifiable, so are their product $F\times G$ and the subspace $\Delta\subset F\times G$.
\end{proof}

\begin{myremark}
The stratifiable space $X\times Y$ from Proposition~\ref{p:notascoli} also belongs to the class $\cc[\M_0]$ (by productivity of $\cc[\M_0]$) but fails to be Ascoli.
\end{myremark}

\begin{myexample} \label{exa-Pyt0-C[M]}
There exists a $\Pp_0$-space, which is not a $\CC[\M]$-space.
\end{myexample}

\begin{proof}
Let $X$ be a  separable  metrizable space containing a topological copy of the Cantor cube $2^\w$. Since the Cantor cube is homeomorphic to its own square, the space $X$ contains an uncountable family $\C$ of pairwise disjoint topological copies of $2^\w$.
Enlarge the family $\C$ to the smallest discretely-complete ideal $\I$ of compact subsets of $X$.
Hence any element of $\I$ is contained in the union of a finite subfamily of $\C$ and a discrete subset of $X$. So, the union of any infinite subfamily of $\C$ does not belong to $\I$, and therefore  $\I$ fails to be a $P^{\w_1}_\w$-ideal. Then by Theorem~\ref{t:P}, the function space $C_\I(X)$ fails to be a $P^{\w_1}_\w$-space and hence cannot be a $\CC[\M]$-space. On the other hand, Theorem~\ref{saak} guarantees that the function space $C_\I(X)$ is a $\Pp_0$-space.
\end{proof}

\begin{myremark}
In  \cite{GK-GMS1} it is shown that the precompact group $\mathbb{Z}^\sharp$ of integers  endowed with the Bohr topology is an $\aleph_0$-space but fails to be a $\Pp$-space.
\end{myremark}

\begin{myproblem}
Is there an Ascoli $\aleph$-space which is not a $\CC[\M]$-space?
\end{myproblem}

\begin{myexample} \label{exa:Ascoli-Y-k-cont}
Let $\lambda$ and $\kappa>\lambda$ be infinite cardinals and $Y$ be a Tychonoff first countable space containing more than one point. Then  the subspace
\[
X=\bigcup_{y\in Y}\{f\in Y^\kappa:|f^{-1}(Y\setminus \{y\})|<\lambda\}
\]
of $Y^\kappa$ is Ascoli but fails to be a $k_\IR$-space. If $Y$ is a topological group (or a linear topological space), then so is the space $X$.
\end{myexample}

\begin{proof}
To see that $X$ is Ascoli, it suffices to check that each element $f\in X$ is contained in a dense Ascoli subspace of $X$.
By \cite[3.10.D]{Eng}), the $\sigma$-product $\sigma(f)=\{g\in Y^\kappa:|\{x\in\kappa:f(x)\ne g(x)\}|<\w\}\subset Y^\kappa$ is  Fr\'{e}chet--Urysohn and hence Ascoli according to the Ascoli Theorem \cite[3.4.20]{Eng}. Clearly, $\sigma(f)$ is a dense subset of $X$, and therefore $X$ is Ascoli by Proposition~\ref{p:Ascoli-char}.

To show that the space $X$ is not a $k_\IR$-space, consider the map $\lim:X\to Y$ assigning to each function $f\in X$ the unique point $y\in Y$ such that the set $\supp(f):=f^{-1}(Y\setminus\{y\})$ has cardinality $|\supp(f)|<\lambda$. It is clear that the map $\lim $ is discontinuous. We claim that for every compact subset $K\subset X$ the restriction $\lim|_K$ is continuous, and therefore $X$ is not a $k_\IR$-space.

It suffices to prove that, for each closed subset $B\subset Y$, the set $D:=\{f\in K:\lim f\in B\}$ is closed in $K$. Suppose for a contradiction that the set $D$ is not closed in $K$ and has an accumulation point $f\in K\setminus D$. Then $y=\lim f\notin B$. Take any subset $N\subset \kappa\setminus \supp(f)$ of cardinality $|N|=\lambda$. It follows from $N\cap\supp(f)=\emptyset$ that $f(N)=\{y\}$. Let $\FF(N)$ be the family of all finite subsets of $N$, partially ordered by the inclusion relation.

By the regularity of $Y$, the point $y$ has a closed neighborhood $\bar O_y\subset Y$, disjoint with the closed set $B$. Since $f$ is an accumulation point of the set $D$, for every finite subset $F\in\FF(N)$ we can choose a function $f_F\in D\subset K$ such that $f_F(F)\subset \bar O_y$.  By the compactness of $K$, the net $(f_F)_{F\in\FF(N)}$ has a limit point $f_\infty\in K$.  So for any neighborhood $O(f_\infty)\subset X \subset Y^\kappa$ of $f_\infty$ and any $F\in\FF(N)$ there is an element $E\in\FF(N)$ such that $F\subset E$ and $f_E\in O(f_\infty)$, and therefore  $f_\infty(N)\subset\bar O_y$.

Now consider the set $S:=\bigcup_{F\in\FF(N)}\supp(f_F)$ and observe that $|S|\le|N|<\kappa$.  Since the set $\{f_F\}_{F\in\FF(N)}$ is contained in the closed subset $\{g\in Y^\kappa:g(\kappa\setminus S)\subset B\}$ of $Y^\kappa$ the limit point $f_\infty$ has the property $f_\infty(X\setminus S)\subset B$. Since the sets $N$ and $X\setminus S$ have cardinality $\geq\lambda$ and $f_\infty(N)\cap f_\infty(X\setminus S)\subset \bar O_y\cap B=\emptyset$, the function $f_\infty$ does not belong to $X$ and hence  $f_\infty \not\in K$. This contradiction completes the proof of the continuity of the restriction $\lim|_K$.
\end{proof}

The Ascoli space constructed in Example~\ref{exa:Ascoli-Y-k-cont} is not cosmic and hence not an $\aleph_0$-space.

\begin{myproblem}\label{prob:Ascoli->k}
Is there an Ascoli space $X$ which is cosmic (or an $\aleph_0$-space) but fails to be a $k_\IR$-space?
\end{myproblem}


In Theorem~\ref{t-Main-C(M)} we characterized $\CC[\M_0]$-spaces via embeddings into function spaces between separable metrizable spaces.

\begin{myproblem}
Give an inner characterization of $\CC[\M_0]$-spaces (desirably, in terms of special networks).
\end{myproblem}

By definition, the class $\cc[\M]$ is closed under countable topological sums.

\begin{myproblem}\label{prob21}
Is the class $\cc[\M]$ closed under taking arbitrary topological sums?
\end{myproblem}

This problem is related to another open problem. Let $\X$ be a class of topological spaces.
A topological space $U\in\X$ is called {\em universal} in $\X$ if $U$ contains a topological copy of any space $X\in \X$. Since any discrete space $D$ belongs to $\cc[\M]$, the class $\cc[\M]$ does not have universal spaces.

\begin{myproblem}\label{prob22}
Is there a universal space $U$ in the class $\cc[\M_0]$?
\end{myproblem}

The next proposition describes a relation between Problems \ref{prob21} and \ref{prob22}.

\begin{myprop}
If the class $\cc[\M]$ is closed under taking topological sums, then the class $\cc[\M_0]$ contains a universal space.
\end{myprop}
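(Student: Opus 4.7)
The plan is to produce a universal object of the form $U=\CC(Z\times\w)$, where the separable metrizable space $Z$ is delivered by applying the hypothesis to the topological sum of all representatives of $\cc[\M_0]$. The hypothesis enters at exactly one step; the rest of the argument is the separable-refinement machinery from the proof of Theorem~\ref{t:CM0}.

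First I would fix a set $\{X_\alpha\}_{\alpha\in A}$ of representatives of $\cc[\M_0]$ up to homeomorphism. This is legitimate: by Corollary~\ref{c-Pyt-C[M]} every $\cc[\M_0]$-space is an $\aleph_0$-space, hence cosmic, hence has cardinality at most $\mathfrak c$, so $\cc[\M_0]$ has only set-many members up to homeomorphism. I then form the topological sum $W:=\bigoplus_{\alpha\in A}X_\alpha$. By the hypothesis, $W\in\cc[\M]$, and Theorem~\ref{t:CMemb} supplies a \emph{single} pair $(Z,Y)\in\M_0\times\M$ together with an embedding $W\hookrightarrow\CC(Z,Y)$. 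The crucial consequence is that every representative $X_\alpha$ is simultaneously embedded into the same space $\CC(Z,Y)$.

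Next, for each $\alpha$, I apply the argument of the implication (2)$\Rightarrow$(3) of Theorem~\ref{t:CM0} to the Lindel\"of $\cc[\M]$-subspace $X_\alpha\subset\CC(Z,Y)$. Foged's theorem identifies $X_\alpha$ as an $\aleph_0$-space, so it admits a compact-covering continuous surjection $\xi_\alpha\colon A_\alpha\to X_\alpha$ from a separable metrizable space $A_\alpha$, and the Ascoli-type argument there shows that the evaluation map $A_\alpha\times Z\to Y$, $(a,z)\mapsto\xi_\alpha(a)(z)$, is continuous. Its image $Y_\alpha\subset Y$ is therefore separable metrizable, and $X_\alpha\subset\CC(Z,Y_\alpha)$. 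Since $Y_\alpha$ embeds into $\IR^\w$ and the exponential law $\CC(Z,\IR^\w)\cong\CC(Z\times\w,\IR)$ applies (Theorem 3.4.9 of \cite{Eng}, using local compactness of $\w$), I obtain the chain
\[
X_\alpha\;\hookrightarrow\;\CC(Z,Y_\alpha)\;\hookrightarrow\;\CC(Z,\IR^\w)\;\cong\;\CC(Z\times\w,\IR)\;=\;\CC(Z\times\w).
\]
Since $Z\times\w$ and $\IR$ both lie in $\M_0$, Theorem~\ref{t-Main-C(M)} gives $U:=\CC(Z\times\w)\in\cc(\M_0,\M_0)=\cc[\M_0]$, and the display exhibits every representative $X_\alpha$, and hence every $\cc[\M_0]$-space, as a subspace of $U$. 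Thus $U$ is universal in $\cc[\M_0]$.

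The main obstacle, and the whole point of the hypothesis, is the step that places $W$ into $\cc[\M]$: without closure under arbitrary topological sums, each $X_\alpha$ would only be guaranteed to embed into its own $\CC(Z_\alpha,Y_\alpha)$, with no obvious way to tame the domains $Z_\alpha$ uniformly. Closure under arbitrary sums is exactly what funnels all $X_\alpha$ through a common $\CC(Z,Y)$; once this is arranged, the separable refinement of Theorem~\ref{t:CM0} and the exponential law automatically lodge every image inside the fixed separable-metrizable function space $\CC(Z\times\w)$.
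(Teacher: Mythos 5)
Your proof is correct and follows essentially the same route as the paper: apply the hypothesis to a set-indexed topological sum, embed it into a single $\CC(Z,Y)$ with $Z\in\M_0$ and $Y\in\M$, refine each Lindel\"of summand into $\CC(Z,Y_\alpha)$ with $Y_\alpha$ separable by repeating the argument of Theorem~\ref{t:CM0}, and conclude via $\CC(Z,\IR^\w)\cong\CC(Z\times\w)$ that $\CC(Z\times\w)$ is universal in $\cc[\M_0]$. The only difference is the choice of summands: the paper sums the function spaces $\CC(X)$ over all subspaces $X\subset\IR^\w$ (which already contain copies of all $\cc[\M_0]$-spaces by Theorem~\ref{t:CM0}), thereby avoiding any set-theoretic discussion, whereas you sum a set of representatives of $\cc[\M_0]$ itself, which is legitimate thanks to your correct observation that $\cc[\M_0]$-spaces are $\aleph_0$-spaces of cardinality at most $\mathfrak{c}$ and hence have only set-many homeomorphism types.
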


\begin{proof}
By assumption, for the topological sum $\bigoplus_{X\subset \IR^\w}\CC(X)$ there is a topological embedding $$e:\bigoplus_{X\subset\IR^\w}\CC(X)\hookrightarrow \CC(Z,Y)$$ for some spaces $Z\in \M_0$ and $Y\in\M$. For every subspace $X\subset\IR^\w$, the function space $\CC(X)$ is Lindel\"of and so is its topological copy $e(\CC(X))$ in $\CC(Z,Y)$. Repeating the argument from the proof of Theorem~\ref{t:CM0}, we can find a separable subspace $Y_X\subset Y$ such that $e(\CC(X))\subset \CC(Z,Y_X)$. The space $Y_X$, being separable and metrizable, embeds into the countable product $\IR^\w$ of the real line. Then the function space $\CC(Z,Y_X)$ embeds into $\CC(Z,\IR^\w)$, which is homeomorphic to $\CC(Z\times\w)$. This means that the space $\CC(Z\times\w)$ is universal in the class $\cc[\M_0]$.
\end{proof}

We expect that the answer to Problems~\ref{prob21} and \ref{prob22} are negative.
Let us observe the following two facts.

\begin{myprop}\label{p:last}
Let $X$ and $Y$ be two separable metrizable spaces such that the function space $\CC(X)$ embeds into $\CC(Y)$.
\begin{enumerate}
\item[{\rm (1)}] If\/ $Y$ is locally compact, then so is $X$.
\item[{\rm (2)}] If\/ $Y$ is Polish, then so is the space $X$.
\end{enumerate}
\end{myprop}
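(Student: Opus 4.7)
My plan is to derive both parts by reducing to known characterizations that translate internal properties of $X$ into topological properties of $\CC(X)$, and then exploiting the hereditary nature of those properties.

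For (1), the key observation is that when $Y$ is locally compact separable metrizable, it is Lindel\"of and hence $\sigma$-compact, and the exhausting sequence of compacta can be chosen so that each member lies in the interior of the next; this makes $Y$ hemicompact. By the classical Arens theorem, hemicompactness of $Y$ is equivalent to metrizability of the function space $\CC(Y)$. Since metrizability is hereditary, the embedding $\CC(X)\hookrightarrow \CC(Y)$ forces $\CC(X)$ to be metrizable, and applying Arens in the other direction gives that $X$ is hemicompact. Finally, a hemicompact first countable space is locally compact: given a witnessing sequence $\{K_n\}$ and a point $x$ not in the interior of any $K_n$, pick a countable base $\{U_n\}$ at $x$ and select $x_n\in U_n\setminus K_n$; then $\{x\}\cup\{x_n\}_{n\in\w}$ is a compact set lying in no $K_n$, a contradiction. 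Since $X$ is separable metrizable, it is first countable, so $X$ is locally compact.

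For (2), I would invoke the Reznichenko theorem quoted earlier in the paper: for a separable metrizable space $Z$, the function space $\CC(Z)$ is stratifiable if and only if $Z$ is Polish. Applying the ``if'' direction to the Polish space $Y$ shows that $\CC(Y)$ is stratifiable. Stratifiability is closed-hereditary and in fact hereditary with respect to arbitrary subspaces (cf.\ \cite[\S5]{gruenhage}), so the embedded copy of $\CC(X)$ inside $\CC(Y)$ is stratifiable, and hence so is $\CC(X)$ itself. Applying the ``only if'' direction of Reznichenko's theorem to $X$ now yields that $X$ is Polish.

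The only non-routine ingredient in either argument is the appeal to Arens' hemicompactness theorem in (1) and Reznichenko's stratifiability theorem in (2); both are cited from the literature, so no genuine obstacle arises beyond verifying that the intermediate properties (metrizability in (1), stratifiability in (2)) are preserved under taking subspaces, which is standard. No additional structure of the embedding $\CC(X)\hookrightarrow \CC(Y)$ is needed beyond the fact that it is a topological embedding.
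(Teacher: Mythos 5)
Your argument is correct and essentially coincides with the paper's: part (1) is exactly the paper's route (local compactness of $Y$ gives metrizability of $\CC(Y)$, heredity passes this to $\CC(X)$, and the Arens/hemicompactness characterization together with first countability of $X$ yields local compactness), with your explicit hemicompact-plus-first-countable argument standing in for the citations to \cite[4.4.2]{mcoy} and \cite[3.4.E]{Eng}. For part (2) the paper's primary argument uses the $\mathfrak{G}$-base characterization of Ferrando--K\c{a}kol, but it explicitly records the Reznichenko stratifiability argument as an alternative, and that alternative is precisely your proof, so no gap remains.
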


\begin{proof}
(1) If $Y$ is a locally compact separable metrizable space, then the function space $\CC(Y)$ is metrizable and so is the space $\CC(X)$. By \cite[4.4.2]{mcoy} and \cite[3.4.E]{Eng}, the space $X$ is locally compact.
\smallskip

(2) If $Y$ is Polish, then by \cite{feka},  the function space $\CC(Y)$ has a $\mathfrak{G}$-base at zero-function $\mathbf{0}\in \CC(Y)$. The latter means that each point of $\CC(Y)$ has a neighborhood base $(U_\alpha)_{\alpha\in \w^\w}$ indexed by functions $\alpha:\w\to\w$ such that $U_\alpha\subset U_\beta$ for any functions $\beta\le\alpha$ in $\w^\w$. The function space $\CC(X)$, being a subspace of $\CC(Y)$ also has a $\mathfrak{G}$-base at each point. In this case Corollary 3 of \cite{feka} implies that the space $X$ is Polish.

The same conclusion could be derived from the Reznichenko's characterization \cite{Rez} of Polish spaces as separable metrizable spaces with stratifiable function spaces $\CC(X)$. Indeed, if $Y$ is Polish, then by \cite{Rez}, the function space $\CC(Y)$ is stratifiable and so is its subspace $\CC(X)$. Applying the Reznichenko's characterization once again, we conclude that the separable metrizable space $X$ is Polish.
\end{proof}

Proposition~\ref{p:last} suggests the following problem (or rather, a program of research).

\begin{myproblem}Let $X,Y$ be two separable metrizable spaces such that the function space $C_k(X)$ embeds into $C_k(Y)$. Which topological properties of $Y$ are inherited by $X$? In particular, if\/ $Y$ belongs to certain Borel or projective class, does then $X$ belong to the same Borel or projective class?
\end{myproblem}

\begin{myremark}
In \cite{Banakh-Ascoli,Gabr-C(X-2),GKP,Pol-2015} Ascoli spaces were detected among function spaces, locally convex spaces, and some spaces appearing in Topological  Algebra.
\end{myremark}

{\bf Acknowledgments} The authors are deeply indebted to Professor R.~Pol for fruitful discussion on the Ascoli property in function spaces. We would like to thank the referee for valuable remarks and suggestions.


\begin{thebibliography}{}

\bibitem{Ba98}
Banakh T.: On topological groups containing a Frechet-Urysohn fan. Mat. Stud.  \textbf{9}, 149--154 (1998)

\bibitem{BaT}
Banakh T.: Topologies on groups determined by sequences: answers to several questions of I. Protasov and E. Zelenyuk.  Mat. Stud. (2)  \textbf{15}, 145--150 (2001)


\bibitem{Banakh}
Banakh T.:   $\Pp_0$-spaces. Topology Appl. (to appear) (http://arxiv.org/abs/1311.1468)

\bibitem{Banakh-Ascoli}
Banakh T.:  Generalizations of $k$-spaces and their applications in general topology, function spaces, and Banach space theory. preprint.


\bibitem{BG-2}
Banakh T., Gabriyelyan S.: The $C_p$-stable closure of the class of separable metrizable spaces. preprint (http://arxiv.org/abs/1412.2240)

\bibitem{Bor}
Borges C.: On stratifiable spaces. Pacific J. Math. \textbf{17}, 1--16  (1966)

\bibitem{Eng}
Engelking R.: General topology. Heldermann Verlag, Berlin, (1989)

\bibitem{feka}
Ferrando J.~C., K\c{a}kol J.: On precompact sets in spaces $C_{c}\left( X\right) $. Georgian Math. J. \textbf{20}, 247--254 (2013)

\bibitem{foged}
Foged  L.: Characterizations of\/ $\aleph $-spaces. Pacific J. Math. \textbf{110}, 59--63  (1984)

\bibitem{Ga30}
Gabriyelyan S.: Topologies on groups determined by sets of convergent sequences. J.~Pure Appl. Algebra \textbf{217}, 786--802  (2013)


\bibitem{Gabr-C(X-2)}
Gabriyelyan S.: Topological properties of function spaces $\CC(X,2)$ over zero-dimensional metric spaces $X$. submitted (http://arxiv.org/abs/1504.04198)


\bibitem{GK-GMS1}
Gabriyelyan S., K\c akol J.: On $\mathfrak{P}$-spaces and related concepts. submitted (http://arxiv.org/abs/1412.1494)



\bibitem{GK2}
Gabriyelyan S., K{\c{a}}kol J.,  Kubi\'s W., Marciszewski W.: Networks for the weak topology of Banach and Fr\'echet spaces. submitted (http://arxiv.org/abs/1412.1748)

\bibitem{GKP}
Gabriyelyan S., K{\c{a}}kol J.,  Plebanek G.: The Ascoli property for function spaces and the weak topology of Banach and Fr\'echet spaces. submitted (http://arxiv.org/abs/1504.04202).





\bibitem{gruenhage}
Gruenhage G.: Generalized metric spaces. Handbook of set-theoretic topology, 423–-501,
 North-Holland, Amsterdam, (1984)


\bibitem{Ke}
Kechris A.: Classical Descriptive Set Theory. Springer-Verlag, New York, (1995)

\bibitem{mcoy}
McCoy R.~A., Ntantu I.: Topological Properties of Spaces of Continuous Functions. Lecture Notes in Math.  \textbf{1315} (1988)


\bibitem{OMe2}
O'Meara P.:  On paracompactness in function spaces with the compact-open topology. Proc. Amer. Math. Soc. \textbf{29} 183--189 (1971)

\bibitem{Mich}
Michael E.: $\aleph \sb{0}$-spaces. J. Math. Mech. \textbf{15}, 983--1002  (1966)

\bibitem{Mi73}
Michael E.: On $k$-spaces, $k\sb{R}$-spaces and $k(X)$. Pacific J. Math. \textbf{47}, 487--498  (1973)

\bibitem{Noble}
Noble N.: Ascoli theorems and the exponential map. Trans. Amer. Math. Soc. \textbf{143}, 393--411  (1969)



\bibitem{Pol-2015}
Pol~R.: A remark on a question of T.~Banakh and S.~Gabriyelyan, handwritten notes (2015)


\bibitem{Rez}
Reznichenko E.: Stratifiability of $C\sb k(X)$ for a class of separable metrizable $X$. Topology Appl. \textbf{155}, 2060--2062 (2008)

\end{thebibliography}
\end{document}